\newtheorem{thm}{Theorem}[section]
\newtheorem{prop}[thm]{Proposition}
\newtheorem{lem}[thm]{Lemma}
\newtheorem{cor}[thm]{Corollary}
\theoremstyle{definition}
\newtheorem{definition}[thm]{Definition}
\theoremstyle{remark}
\newtheorem{remark}[thm]{Remark}
\numberwithin{equation}{section}
\newcommand{\ZZZ}{\mathbb{Z}}
\newcommand{\RRR}{\mathbb{R}}
\newcommand{\TTT}{\mathbb{T}}
\newcommand{\PPP}{\mathbb{P}}
\newcommand{\EEE}{\mathbb{E}}
\newcommand{\N}{\mathcal{N}}
\newcommand{\J}{\mathcal{J}}
\newcommand{\dd}{\delta}
\newcommand{\w}{\omega}
\newcommand{\case}[2]{ \noindent \textbf{Case #1:} #2 \\}
\begin{document}


\title{
Almost sure well-posedness for the cubic nonlinear Schr\"{o}dinger equation in the super-critical regime on $\TTT^d$, $d\geq 3$}


\author{HAITIAN YUE}
\address{Department of Mathematics and Statistics, University of Massachusetts Amherst}
\email{hyue@math.umass.edu}



\begin{abstract}
In this paper we prove almost sure local well-posedness in both atomic spaces $X^s$ and Fourier restriction spaces $X^{s, b}$ for the cubic nonlinear Schr\"odinger equation on $\TTT^d$ ($d\geq 3$) in the super-critical regime.
\end{abstract}

\maketitle


\section{Introduction}

We consider the Cauchy initial value problem for the cubic nonlinear Schr\"odinger equation (NLS) in the d-dimensional tori $\TTT^d$ ($d\geq 3$)

\begin{equation}\label{NLS.IVP}
  \begin{cases}
  iu_t + \Delta u = \rho u|u|^{2},  \qquad \rho = \pm 1, \qquad x\in \mathbb{T}^d\ (d\geq 3)\\
  u(0, x) = \phi^\omega(x).\\
  \end{cases}
\end{equation}

The initial data $\phi^\omega(x)$ in (\ref{NLS.IVP}) is defined by randomization.
\begin{equation}\label{RandomInitialData}
\phi^{\omega}(x) = \sum_{n\in \mathbb{Z}^d} \frac{g_n(\omega)}{\langle n\rangle^{d-1-\alpha}} e^{in\cdot x},\text{ where } \langle n\rangle = \sqrt{1+|n|^2},
\end{equation}
 where $(g_n(\omega))_{n\in \mathbb{Z}^d}$ is a sequence of complex i.i.d. mean zero Gaussian random variables on a probability space $(\Omega, A, \mathbb{P})$.

\begin{remark}
Let's consider a function $\phi \in H^{s_c-\alpha-\epsilon}(\mathbb{T}^d)$ for any $\epsilon > 0$ of the form
\begin{equation}\label{InitialData}
\phi(x) = \sum_{n\in \mathbb{Z}^d} \frac{1}{\langle n\rangle^{d-1-\alpha}} e^{in\cdot x}.
\end{equation}
If we replace the Fourier coefficients of (\ref{InitialData}) with randomized coefficients $\frac{g_n(\omega)}{\langle n\rangle^{d-1-\alpha}}$, then the randomization of (\ref{InitialData}) becomes the {\it random initial data} (\ref{RandomInitialData}) of (\ref{NLS.IVP}). It's easy to see that $\phi^{{\omega}}(x)$ is a.s. in $H^{s_c-\alpha-\epsilon}$, {but not} in $H^s$, $s\geq s_c-\alpha$. Thus randomization does not regularize the data
in the scale of the Sobolev spaces.
\end{remark}

In the Euclidean space $\RRR^d$, the scaling symmetry plays an important role on the well-posedness (existence, uniqueness and continuous dependence of the data to solution map) theory of the Cauchy initial value problem (IVP) for NLS:
\begin{equation}\label{eq:pNLS}
\begin{cases}
i\partial_t u + \Delta u = |{u}|^{p-1} u,\qquad p>1\\
u(0, x) = u_0(x) \in \dot{H}^s(\RRR^d).
\end{cases}
\end{equation}
The IVP (\ref{eq:pNLS}) is scaling invariant in the Sobolev norm $\dot{H}^{s_c}$, where $s_c := \frac{d}{2} - \frac{2}{p-1}$ is so-called scaling critical regularity. 
Initial data in  $\dot{H}^s$ with $s > s_c$ ({sub-critical regime}) is the best possible setting for well-posedness. Indeed, local-in-time well-posedness of (\ref{eq:pNLS}) was proven by Cazenave-Weissler in \cite{caz1}.   

For $\dot{H}^s$ data with $s = s_c$ ({critical regime}) the well-posedness problem is more difficult than the one in the sub-critical regime.  In fact, the well-posedness in the sub-critical regime can be obtained from the well-posedness in the critical regime by a persistence of regularity argument. Bourgain \cite{bourgain1999radial} first proved the large data global-in-time well-posedness and scattering for the defocusing energy-critical ($s_c = 1$) NLS in $\RRR^3$  with radially symmetric initial data in $\dot{H}^1$ by introducing an induction method on the size of energy and a refined Morawetz inequality. A different proof of the same result was given by Grillakis in \cite{grillakis2000radial}. A breakthrough was made by Colliander-Keel-Staffilani-Takaoka-Tao in \cite{colliander2008global}. Their work extended the results of Bourgain \cite{bourgain1999radial} and Grillakis \cite{grillakis2000radial}. They proved global-in-time well-posedness and scattering of the energy-critical problem in $\RRR^3$ for general large data in $\dot{H}^1$.
Similar results were then proven by Ryckman-Vi{\c{s}}an \cite{visan2007global} on the higher dimension $\RRR^d$ spaces. Furthermore, Dodson proved mass-critical ($s_c = 0$) global-in-time wellposedness results for $\RRR^d$ in his series of papers \cite{dodson2012global, dodson2016global1, dodson2016global2}.

Data in $\dot{H}^s$ with $s < s_c$ ({super-critical regime}) is rougher than 
the critical regularity data. 
Intuitively, in this case, scaling is 'against well-posedness'. This intuition was verified for example in  
\cite{christ2003ill}\cite{christ2003ill2}, where it is shown that super-critical data lead the initial value problem for NLS in $\RRR^d$ to ill-posedness. More precisely, they show that the solutions whose $\dot{H}^s$ norms become arbitrary large in arbitrary small time with arbitrary small initial data can be constructed. These solutions, exhibiting -what is called- {\it norm inflation}, contradict, in particular, the continuous dependence on the initial data.

However, ill-posedness in some cases can be 
circumvented by an appropriate probabilistic method in some probability space of initial 
data, in the other words, one may hope to establish almost sure LWP with respect to certain probability 
random data space. This random data approach to well-posedness first appeared in Bourgain's series 
 of papers \cite{bourgain1994periodic}\cite{bourgain1996periodic2d} in the context of studying the invariance of Gibbs 
measures associated to NLS on tori ($\TTT$ and $\TTT^2$). Later, Burq-Tzvetkov \cite{burq2008nlw1}\cite{burq2008nlw2} obtained similar 
results in the context of  the cubic
nonlinear wave equation (NLW) on a three dimensional compact Riemannian manifold.
{ The random data approach to wellposedness has also been pursued by many authors and applied to several nonlinear evolution equations on different manifolds ($\RRR^d$, $\TTT^4$ or $\mathbb{S}^d$ etc.)  to obtain almost sure local -and in some instances almost sure global- well-posedness results.} {Some references  in the context of NLS include:} \cite{nahmod2012dnls, CollianderOh2012nls1d, Deng2012nls2d, BourgainBulut1, BourgainBulut2, BourgainBulut3, nahmod2015almost, OanaNLSradial, HirayamadNLS2016,  killip2017almost, oh2017probabilistic, murphy2017random, dodson2018almost}; { in the context of  NLW include:} \cite{burq2008nlw1, burq2008nlw2, zhong2012nlw, burq2014, Luhrmann2014nlw, Luhrmann2016nlw, OanaNLW2016R3, xiabo2016, OanaNLW2017R45, dodson2017almost, bringmann2018almost};{ and in the context of  Navier-Stokes equations include:} \cite{DengCuiNS2011,  ZhangFangNS2012, nahmod2013NS, wang2017}. Recently Dodson-L\"uhrmann-Mendelson \cite{dodson2017almost} first established 
almost sure scattering for cubic NLW in $\RRR^4$ with randomized radially symmetric initial data in the super-critical regime. Then Killip-Murphy-Vi\c{s}an \cite{killip2017almost} and Dodson-L\"uhrmann-Mendelson \cite{dodson2018almost} proved similar almost sure scattering results with randomized radial data for cubic NLS on $\RRR^4$.

In this paper, we study the cubic NLS in the super-critical regime on tori $\TTT^d$ $(d\geq 3)$ via the probabilistic approach. After Bourgain's first two papers \cite{bourgain1994periodic}\cite{bourgain1996periodic2d} on $\TTT^1$ and $\TTT^2$, Nahmod-Staffilani \cite{nahmod2015almost} proved an almost sure local-in-time well-posedness result for the periodic 3D quintic NLS with an appropriate gauge transform in the super-critical regime.
This paper follows the similar spirit and obtain local-in-time well-posedness in high probability in the adapted atomic spaces $X^s$ by introducing a new lemma which modifies the "transfer principle" (Prop \ref{timelocalTransferPrinciple}) of atomic spaces up and focuses on the estimates in the small time intervals. In this paper, we construct a probability measure for the function space of initial data and show that the solutions exist for high probability of initial data.


Our main result can be stated as following:

\begin{thm}[Main Theorem] \label{MainThm}
Suppose $d\geq 3$ and   
\begin{equation}\label{coef:srd}
s_r(d) = \begin{cases}
\frac{1}{7}& d= 3\\
\frac{4}{19}& d= 4\\
\frac{1}{4} & d\geq 5.
\end{cases}
\end{equation}
Let $0\leq \alpha< s_r(d)$, $s\in [s_c, s_c + s_r(d)-\alpha)$.  Then there exists $\delta_0 > 0$ and $r = r(s, \alpha) > 0$ such that for any $ 0 < \delta < \delta_0$, there exists $\Omega_\delta \in A$ with
$$
\mathbb{P}(\Omega_\delta^c) < e^{-\frac{1}{\delta^r}},
$$
and for each $\omega \in \Omega_\delta$ there exists a unique solution $u$ of (\ref{NLS.IVP}) in the space
$$
S(t) \phi^{\omega} + X^s([0, \delta])_{\text{dist}},
$$
where $S(t)\phi^{\omega}$ is the linear evolution of the initial data $\phi^{\omega}$ given by (\ref{RandomInitialData}).
\end{thm}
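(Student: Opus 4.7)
The plan is the standard Bourgain-type random-data decomposition. Writing the candidate solution as $u = S(t)\phi^\omega + v$, with $F(t):=S(t)\phi^\omega$ the random linear evolution carrying the super-critical roughness and $v$ a smoother nonlinear remainder satisfying $v(0)=0$, the IVP (\ref{NLS.IVP}) reduces via Duhamel's formula to solving
\[
v(t) = -i\rho\int_0^t S(t-t')\bigl[\,|F+v|^{2}(F+v)\,\bigr](t')\,dt'
\]
for $v$ on $[0,\delta]$. I would solve this by a contraction mapping in a small ball of the restricted atomic space $X^s([0,\delta])_{\text{dist}}$.

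The first step is to construct the exceptional set $\Omega_\delta$. Because the Fourier coefficients $g_n(\omega)/\langle n\rangle^{d-1-\alpha}$ are i.i.d.\ Gaussians, standard Wiener-chaos / large-deviation estimates give, outside a set of probability at most $e^{-1/\delta^r}$, a family of improved Strichartz-type bounds on $F$ with essentially $\alpha$ extra derivatives of regularity beyond the deterministic membership $\phi^\omega\in H^{s_c-\alpha-\epsilon}$. Concretely one obtains, for dyadic frequencies $N$ and appropriate $p$, bounds of the form $\|P_N F\|_{L^p_{t,x}([0,\delta]\times\TTT^d)} \lesssim \delta^\theta N^{\sigma(p,\alpha)}$ with an exponent $\sigma(p,\alpha)$ strictly better than the deterministic one; dyadic summation and a union bound over a countable family of exponents $p$ cost only polynomial factors, which the exponential gain in $\delta$ easily absorbs.

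The heart of the proof is a family of multilinear estimates in $X^s([0,\delta])_{\text{dist}}$. Expanding $|F+v|^2(F+v)$ produces eight trilinear monomials in $F, \overline{F}, v, \overline{v}$, and for each I would establish a bound of the form
\[
\Bigl\|\int_0^t S(t-t')\bigl[w_1\overline{w_2}w_3\bigr](t')\,dt'\Bigr\|_{X^s([0,\delta])_{\text{dist}}} \lesssim \delta^{\theta_0}\prod_{j=1}^3 \|w_j\|_{Y_j},
\]
where each $Y_j$ is either $X^s([0,\delta])_{\text{dist}}$ (for a $v$ slot) or the probabilistic Strichartz space controlled on $\Omega_\delta$ (for an $F$ slot). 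The purely $vvv$ piece is the standard deterministic trilinear $X^s$-estimate for cubic NLS on $\TTT^d$, whose proof rests on the $\ell^2$-decoupling Strichartz bound of Bourgain--Demeter; it is precisely this deterministic input that pins down the dimensional thresholds (\ref{coef:srd}). For the mixed and pure-$F$ pieces I would apply Proposition \ref{timelocalTransferPrinciple} to transfer the $v$-factors from $X^s$ into Strichartz spaces, and then combine with the probabilistic bounds on $F$ from the previous step. The prefactor $\delta^{\theta_0}$ comes either from direct H\"older in time or, in the borderline cases, from the refined time-localized version of the transfer principle highlighted in the introduction.

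The main obstacle will be the multilinear estimates that involve one or several copies of $F$. Because $F\in H^{s_c-\alpha-\epsilon}$ only, deterministic Strichartz alone would leave a derivative deficit of order $\alpha$; closing the estimates therefore requires pairing the probabilistic $\alpha$-gain with a careful dyadic frequency decomposition so that one simultaneously extracts a positive power of $\delta$ for the contraction, absorbs a factor of $N^{s-s_c}$ in the summation over the high frequency of $v$, and stays within the range $s-s_c<s_r(d)-\alpha$ allowed by the decoupling input. Once all eight trilinear bounds are in place on $\Omega_\delta$, Banach's fixed-point theorem on a ball of radius $\sim \delta^{\theta_0}$ in $X^s([0,\delta])_{\text{dist}}$ produces the unique $v$, and hence the unique $u=S(t)\phi^\omega+v$ claimed by Theorem \ref{MainThm}.
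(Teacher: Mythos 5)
Your outline misses the step the whole paper is built around: the gauge transformation. You solve for $v=u-S(t)\phi^\omega$ with the \emph{full} nonlinearity $|F+v|^2(F+v)$ and try to place $v$ in $X^s([0,\delta])$, but the resonant part of the pure--random cubic term makes this impossible. Writing $F=S(t)\phi^\omega$, the monomial $F\overline{F}F$ contains the diagonal pairing $n_2=n_1$ in its Fourier expansion, which sums to $\bigl(\int_{\TTT^d}|F|^2\,dx\bigr)F=\|\phi^\omega\|_{L^2}^2\,F$ (the $L^2$ norm is conserved by the linear flow), and its Duhamel contribution is $\mathcal{I}\bigl(\|\phi^\omega\|_{L^2}^2F\bigr)(t)=t\,\|\phi^\omega\|_{L^2}^2\,S(t)\phi^\omega$. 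By Remark 1.1 the data are a.s.\ in $H^{s_c-\alpha-\epsilon}$ and no better, so this term has only the regularity of the data and does not lie in $X^s([0,\delta])$ for any $s\geq s_c$; this is a genuine regularity obstruction that no probabilistic gain or power of $\delta$ can repair, so your contraction cannot close as written. This is precisely why the paper first passes to the Wick--ordered problem (\ref{GNLS.IVP}) with nonlinearity (\ref{GNonlinearity}) (which deletes the pairings $n_2=n_1$, $n_2=n_3$), proves Theorem \ref{ASThm} for the remainder $w$ in $X^s$, and only then returns to (\ref{NLS.IVP}) via the metric (\ref{Metric}): the subscript ``dist'' in $S(t)\phi^\omega+X^s([0,\delta])_{\text{dist}}$ encodes this gauge twist and is not a cosmetic relabeling of $X^s$. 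You borrow the notation but never introduce the gauge, so the scheme is set up in a space the solution does not inhabit.

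Two further substantive points. First, you attribute the thresholds (\ref{coef:srd}) to the deterministic Bourgain--Demeter input; that is not where they come from. The all-deterministic estimate (Case A(a)) works for every $s\geq s_c$ in every dimension; the restriction $s-s_c<s_r(d)-\alpha$ is forced by the cases in which the \emph{random} piece carries the highest frequency (Case B, above all $(R_1,D_2,D_3)$), where probabilistic Strichartz bounds on $F$ plus H\"older are insufficient. There the paper needs the trilinear Gaussian chaos estimate (Lemma \ref{MLD}), the lattice counting Lemmas \ref{Counting1}--\ref{Counting2}, Bourgain's $\mathscr{G}^*\mathscr{G}$ matrix argument, the time-localized transfer principle (Proposition \ref{timelocalTransferPrinciple}) applied after dualizing the $u_0$ slot, and the $U^2$--$U^p$ interpolation through $V^2$ (Proposition \ref{Interpolation2}); your sketch contains neither this machinery nor a substitute, so the admissible range of $(s,\alpha)$ is not actually derived. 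Second, the theorem includes the endpoint $s=s_c$, where the deterministic trilinear bound produces no positive power of $\delta$, so a contraction on a ball of radius $\sim\delta^{\theta_0}$ does not close; the paper handles this with the weaker norms $Z^{s_c}$, $Z'^{s_c}$, the bilinear estimate (Lemma \ref{lem:bilinear}) and the refined nonlinear estimate (Proposition \ref{prop:refinednonlinear}), a step absent from your proposal.
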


Here we denoted by $X^s([0, \delta])_{\text{dist}}$ 
the metric space $(X^s([0, \delta]), {\text{dist}})$ where ${\text{dist}}$ is the metric defined by (\ref{Metric}) 
and $X^s([0, \delta])$ is the adapted atomic space introduced  in the Definition \ref{def:Xs}.

\begin{remark}
We also prove the analog of Main Theorem in $X^{s, b}$ (Theorem
\ref{thm:MainThmXsb}) instead of the atomic space $X^s$ in the Section 7, but we hold the theorem in $X^{s, b}$ only when $s\in (s_c, s_c+ s_r(d)-\alpha]$ (the proof of Theorem \ref{thm:MainThmXsb} fails when $s = s_c$). If we only consider the statement of theorems, for some $s>s_c$, the solution space
$
S(t) \phi^{\omega} + X^{s, b}([0, \delta])_{\text{dist}}
$  is indeed in the space $
S(t) \phi^{\omega} + X^{s_c, b}([0, \delta])_{\text{dist}}
$. However, the proof of $s= s_c$ case is still important in the sense that  we obtain the nonlinear estimate at the regularity of $s_c$. Especially in the case of $s_c = 1$, the nonlinear estimate at the regularity of $s_c$ would be necessary if we try to control the energy in a long-time term.

\end{remark}

To prove Theorem \ref{MainThm}, first we consider the initial value problem below,
\begin{equation}\label{GNLS.IVP}
  \begin{cases}
  iv_t + \Delta v = \mathcal{N}(v),  \qquad \rho = \pm 1, \qquad x\in \mathbb{T}^d\\
  v(0, x) = \phi^\omega(x),\\
  \end{cases}
\end{equation}
where 
\begin{equation}\label{GNonlinearity}
\mathcal{N}(v_1, v_2, v_3) := \rho (v_1 {v}_2 v_3 -  2 v_1 \int_{\mathbb{T}^d} {v}_2 v_3 dx) = \N_1(v_1, v_2, v_3) + \N_2(v_1, v_2, v_3),
\end{equation}
and set $\mathcal{N}(v) := \mathcal{N}(v, \overline{v}, v)$.

Suppose $\beta_v(t) = 2\int_{\mathbb{T}^d} |v|^2 dx$ and define $u(t,x) : = e^{-i\rho \beta_v(s) ds} v(t, x)$. We observe that u solve IVP (\ref{NLS.IVP}). Now suppose that one obtains well-posedness for the IVP (\ref{GNLS.IVP}) in a certain Banach space $(X, \|\cdot\|)$ then one can transfer those results to the IVP (\ref{NLS.IVP}) by using a metric space $X_{\text{dist}} : = (X, {\text{dist}})$
where
\begin{equation}
\label{Metric}
d(u, v) : = \| e^{i\rho \beta_u(s) ds} u(t, x) - e^{i\rho \beta_v(s) ds}v(t, x)\|.
\end{equation}

We define 
\begin{equation}\label{def:v0}
v_0^\omega = S(t) \phi^{\omega}(x),
\end{equation} 
and $w(x, t)$ solves the following the IVP (\ref{RNLS.IVP}), then we know that $v = v_0^\omega + w$ solves the IVP (\ref{GNLS.IVP}) which is the gauged NLS we want to solve.
\begin{equation}\label{RNLS.IVP}
  \begin{cases}
  iw_t + \Delta w = \mathcal{N}(w + v_0^\omega),   \qquad x\in \mathbb{T}^d\\
  w(0, x) = 0,\\
  \end{cases}
\end{equation}
where $\mathcal{N}(\cdot)$ was defined in (\ref{GNonlinearity}).

We are now ready to state the almost sure well-posedness result for the IVP (\ref{RNLS.IVP}) which implies the main theorem (Theorem \ref{MainThm}).

\begin{thm}\label{ASThm}
Suppose $d\geq 3$ and  $s_r(d)$ is defined as (\ref{coef:srd}). 
Let $0\leq \alpha< s_r(d)$, $s\in [s_c, s_c + s_r(d)-\alpha)$.  Then there exists $\delta_0 > 0$ and $r = r(s, \alpha) > 0$ such that for any $ 0 < \delta < \delta_0$, there exists $\Omega_\delta \in A$ with
$$
\mathbb{P}(\Omega_\delta^c) < e^{-\frac{1}{\delta^r}},
$$
and for each $\omega \in \Omega_\delta$ there exists a unique solution $w$ of (\ref{RNLS.IVP}) in the space
$ X^s([0, \delta])\cap C([0, \delta], H^s(\mathbb{T}^4))$.
\end{thm}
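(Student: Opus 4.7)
The plan is to solve (\ref{RNLS.IVP}) by a contraction mapping argument on a small ball in $X^s([0,\delta])$, restricted to a carefully chosen subset $\Omega_\delta \subset \Omega$. Writing the Duhamel formulation
$$
w(t) = -i\int_0^t S(t-t')\, \mathcal{N}\bigl((w+v_0^\omega)(t')\bigr)\, dt',
$$
I would expand $\mathcal{N}(w+v_0^\omega)$ using the trilinearity of $\mathcal{N}$ into eight pieces indexed by whether each slot is occupied by $w$ or by the random linear evolution $v_0^\omega$. The deterministic pure nonlinear term $\mathcal{N}(w,\overline{w},w)$ is handled by the standard multilinear estimate in $X^s$, but applied through the modified transfer principle (Proposition \ref{timelocalTransferPrinciple}) so that a positive power of $\delta$ is extracted from short-time Strichartz inputs. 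This is what gives the contraction its smallness in $\delta$ for the fully nonlinear piece.

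The crux is to produce the six remaining estimates, which are multilinear in $v_0^\omega$ and $w$, at the critical regularity $s = s_c$. Since $\phi^\omega \in H^{s_c - \alpha - \epsilon}\setminus H^{s_c - \alpha}$ almost surely, one cannot close these by putting $v_0^\omega$ in any Sobolev space of positive regularity excess, and a probabilistic gain is essential. I would construct $\Omega_\delta$ by demanding that $v_0^\omega$ satisfies a finite list of favorable bounds: improved Strichartz-type estimates, and bilinear/trilinear multiplier estimates for the Fourier forms $\sum_{n_1,n_2,n_3} c_{n_1 n_2 n_3}\, g_{n_1}\overline{g_{n_2}} g_{n_3}$ arising from each mixed term. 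These follow from Wiener chaos / Gaussian hypercontractivity large deviation inequalities applied to each dyadic frequency block, combined with an orthogonality argument exploiting independence of the $g_n$'s; summing the dyadic pieces and optimizing the exceptional sets gives each bound on a set of probability at least $1 - C\exp(-c/\delta^r)$ for the desired $r = r(s,\alpha)$. Intersecting over the finite list yields $\Omega_\delta$.

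The main obstacle is the trilinear random estimate for $\mathcal{N}(v_0^\omega)$ itself, where three low-regularity random factors must be controlled in $X^s$ at $s=s_c$. One must split resonant from non-resonant interactions: the resonant set is essentially absorbed by the $-2 v_1 \int v_2 v_3$ subtraction built into $\mathcal{N}$ in (\ref{GNonlinearity}), while the non-resonant contribution is estimated by combining a counting estimate for the $\TTT^d$ paraboloid with Bourgain--Demeter $\ell^2$ decoupling and the probabilistic decoupling of the Gaussian cubic form. Matching the loss coming from the $\TTT^d$ Strichartz range against the gain from the decay $\langle n\rangle^{-(d-1-\alpha)}$ of the random coefficients in (\ref{RandomInitialData}) is exactly what forces $\alpha < s_r(d)$ in (\ref{coef:srd}); the three-dimensional case is tightest because the usable Strichartz range on $\TTT^3$ is narrowest, explaining $s_r(3) = \tfrac{1}{7}$.

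With all multilinear bounds in hand, each nonlinear term of the Duhamel integrand is either quadratic or cubic in $w$ (yielding $\delta$-smallness from the short-time transfer principle) or contains at least one $v_0^\omega$ factor (yielding $\delta$-smallness from the probabilistic bounds on $\Omega_\delta$). Choosing the radius $R$ of the ball in $X^s([0,\delta])$ and $\delta$ small, the Duhamel map becomes a strict contraction, producing the unique fixed point $w \in X^s([0,\delta])$. Embedding $X^s([0,\delta]) \hookrightarrow C([0,\delta], H^s(\TTT^d))$ completes the proof of Theorem \ref{ASThm}, from which Theorem \ref{MainThm} follows via the gauge transform $u = e^{-i\rho\int \beta_v(s)\, ds}\, v$.
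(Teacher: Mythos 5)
Your overall architecture (Duhamel fixed point in a ball of $X^s([0,\delta])$, expansion of $\mathcal{N}(w+v_0^\omega)$ into the eight deterministic/random combinations, probabilistic bounds via Gaussian chaos large deviations, counting estimates and the transfer principle, and the final embedding into $C([0,\delta],H^s)$) matches the paper's strategy for $s_c<s<s_c+s_r(d)-\alpha$. The genuine gap is at the endpoint $s=s_c$, which the theorem includes. You claim that the purely deterministic term $\mathcal{N}(w,\overline{w},w)$ yields a positive power of $\delta$ ``through the modified transfer principle,'' and that this gives the contraction its smallness. This cannot work at critical regularity: the deterministic trilinear bound (Proposition \ref{AllDet} / Proposition \ref{NonlinearEstimate}) produces the factor $\delta^{c\min\{1,s-s_c\}}$, which degenerates to $\delta^{0}=1$ when $s=s_c$; by scaling there is no time-smallness gain for the cubic-in-$w$ interaction measured purely in $X^{s_c}$. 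Moreover, even setting aside $\delta$-smallness, the dyadic summation of the deterministic piece at exactly $s=s_c$ is not closed by the $(N_3\min\{N_0,N_2\}/N_2^2)^c N_2^{-(s-s_c)}$ factor alone, since the $N_2^{-(s-s_c)}$ gain disappears and one is left trying to sum an $\ell^2$ sequence over all intermediate dyadic scales.

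The paper resolves this by a different mechanism that your proposal is missing: it introduces the weaker norms $Z^{s_c}(I)$ and $Z'^{s_c}(I)$ and a refined trilinear estimate (Proposition \ref{prop:refinednonlinear}), proved via the bilinear Strichartz estimate of Lemma \ref{lem:bilinear}, which bounds the deterministic cubic term by $\|v_i\|_{X^{s_c}}\|v_j\|_{Z'^{s_c}}\|v_k\|_{Z'^{s_c}}$ with frequency factors $(\tfrac{N_3}{N_1}+\tfrac{1}{N_3})^{\kappa}(\tfrac{N_2}{N_0}+\tfrac{1}{N_2})^{\kappa}$ that make the dyadic sums converge at $s=s_c$. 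The contraction is then run on the set $\{w:\|w\|_{X^{s_c}(I)}\le 1,\ \|w\|_{Z'^{s_c}(I)}\le a\}$ with $a=\delta$, the smallness for the deterministic piece coming from the $Z'^{s_c}$-smallness (bootstrapped, since $w(0)=0$ and the random source terms carry $\delta^c$), not from any $\delta$-power in the $X^{s_c}$ trilinear bound. A secondary, minor inaccuracy: the gauge subtraction $-2v_1\int v_2v_3$ does not absorb the whole resonant set in the all-random case; the remaining diagonal contribution $\mathcal{J}_2$ still has to be estimated directly (the paper does this with Lemma \ref{Neps}). If you restrict your argument to $s>s_c$ it is essentially the paper's Case 1; to cover $s=s_c$ you must add the $Z'$-norm machinery or an equivalent critical ingredient.
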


\subsection*{Outline of the following paper}
The rest of the paper is organized as follows. In Section 2, we state some basic probabilistic properties the proof depends on. In Section 3, we introduce the adapted atomic spaces $X^s$ and $Y^s$, provide some corresponding embedding properties of the spaces and furthermore obtain a transfer principle proposition (Proposition \ref{timelocalTransferPrinciple}) focusing on the small time intervals. 
Section 4 contains some Strichartz estimates, lattice counting lemmata and other lemmata we rely upon.
In Section 5, we estimate the nonlinear terms in the $X^s$-norm case by case. Section 6 contains statements on almost sure local well-posedness for the gauged Cauchy initial value problem (\ref{RNLS.IVP}) by using the nonlinear estimate in Section 5. In Section 7, we prove an analog result of almost sure local well-posedness  in $X^{s, b}$ spaces of the main theorem (Theorem \ref{MainThm}).

\subsection*{Acknowledgments}
The author is greatly indebted to his advisor, Andrea R. Nahmod, for suggesting this problem and her patient guidance and warm encouragement over the past years. The author also would like to thank Prof. Gigliola Staffilani for correcting one error in an earlier version of the paper and several helpful discussions in MSRI. The author acknowledges support from the National Science Foundation through his advisor Andrea R. Nahmod’s grants {NSF}-{DMS} 1201443 and {NSF}-{DMS} 1463714.

\section{Probabilistic set up}

\begin{lem}\label{Neps}
Let $\{g_n(\omega)\}_{n\in\mathbb{Z}^d}$ be a sequence of complex i.i.d. mean zero Gaussian random variables on a probability space $(\Omega, A, \mathbb{P})$. Then given $\epsilon, \delta > 0$, there exists a subset $\Omega_\dd\subset \Omega$ satisfying $\PPP(\Omega_\dd^c) \leq e^{-\frac{1}{\dd^{\epsilon}}}$, such that
$$
|g_n(\omega)| \lesssim \frac{1}{\delta^\epsilon} \log (\langle n\rangle+1).
$$
\end{lem}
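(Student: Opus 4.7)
The plan is to combine the standard Gaussian tail bound with a union bound over $n \in \ZZZ^d$. For a sufficiently large constant $C_0 = C_0(d,\epsilon)$ to be fixed later, I would define the bad event at frequency $n$ by
$$E_n^\dd := \bigl\{ \w \in \Omega : |g_n(\w)| > C_0 \dd^{-\epsilon} \log(\langle n \rangle + 1)\bigr\},$$
and set $\Omega_\dd := \Omega \setminus \bigcup_{n\in\ZZZ^d} E_n^\dd$. By construction every $\w \in \Omega_\dd$ satisfies the desired pointwise bound with implicit constant $C_0$, so the task reduces to showing $\PPP\bigl(\bigcup_n E_n^\dd\bigr) \leq e^{-\dd^{-\epsilon}}$.

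The first ingredient is the tail estimate for a complex mean-zero Gaussian, $\PPP(|g_n| > \lambda) \leq 2 e^{-c\lambda^2}$ for some absolute $c>0$. Substituting $\lambda = C_0 \dd^{-\epsilon}\log(\langle n\rangle+1)$ gives
$$\PPP(E_n^\dd) \leq 2\exp\bigl(-c C_0^2 \dd^{-2\epsilon}(\log(\langle n\rangle+1))^2\bigr),$$
and it then remains to verify that the sum of these quantities over $n\in\ZZZ^d$ is bounded by $e^{-\dd^{-\epsilon}}$ once $\dd$ is small enough.

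To evaluate the lattice sum I would separate $\ZZZ^d$ into two regions. For the $O_d(1)$ points with $\langle n\rangle + 1 < e$, one has $\log(\langle n\rangle+1)\geq \log 2$, so each term is at most $\exp(-cC_0^2(\log 2)^2\dd^{-2\epsilon})$, which is much smaller than $e^{-\dd^{-\epsilon}}$ for small $\dd$ because $\dd^{-2\epsilon}$ dominates $\dd^{-\epsilon}$. For the remaining points, where $\log(\langle n\rangle+1) \geq 1$, I would choose $C_0$ and a threshold $\dd_0 = \dd_0(d,\epsilon)$ so that for $\dd < \dd_0$ the elementary inequality
$$c C_0^2 \dd^{-2\epsilon}(\log(\langle n\rangle+1))^2 \geq 2\dd^{-\epsilon} + (d+1)\log(\langle n\rangle+1)$$
holds, which follows by splitting $cC_0^2\dd^{-2\epsilon}$ into two halves and using $\log(\langle n\rangle+1)\geq 1$ in each piece. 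Substituting back converts the Gaussian exponent into the polynomial tail
$$\PPP(E_n^\dd) \leq 2 e^{-2\dd^{-\epsilon}} (\langle n\rangle+1)^{-(d+1)},$$
whose lattice sum converges. Adding the two regional contributions and absorbing the resulting dimensional constant into the exponent (using $\dd^{-\epsilon} \geq \log C_d$ for $\dd$ small) yields $\PPP(\Omega_\dd^c)\leq e^{-\dd^{-\epsilon}}$.

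The only step that requires any care is this last piece of bookkeeping: choosing $C_0$ large enough (depending on $d,\epsilon$) and $\dd_0$ small enough to turn the doubly-logarithmic Gaussian exponent into a clean factor $e^{-\dd^{-\epsilon}}$ times a summable polynomial tail. The argument uses no probabilistic input beyond the scalar Gaussian tail bound.
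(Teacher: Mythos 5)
Your argument is correct and follows the same overall strategy as the paper: a tail estimate for a single Gaussian, a union bound over $n\in\ZZZ^d$, and a check that the threshold $\delta^{-\epsilon}\log(\langle n\rangle+1)$ (or an affine variant) makes the union-bound sum both convergent and smaller than $e^{-\delta^{-\epsilon}}$. The one substantive difference is the pointwise tail input: the paper works from the exponential moment $\mathbb{E}\,e^{|g_n|}\leq C$, i.e.\ Markov gives $\PPP(|g_n|>\lambda)\lesssim e^{-\lambda}$ with a \emph{linear} exponent, and then introduces the additive threshold $\delta^{-\epsilon}+d\log\langle n\rangle$; you instead use the full Gaussian tail $\PPP(|g_n|>\lambda)\lesssim e^{-c\lambda^2}$ with the multiplicative threshold $C_0\delta^{-\epsilon}\log(\langle n\rangle+1)$. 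Your version exploits the Gaussian structure more sharply (and would deliver any power of $\delta^{-\epsilon}$ in the probability bound with no extra work), while the paper's version is slightly more general in that it only needs a finite exponential moment, not Gaussianity. Both close via the same bookkeeping — split the exponent so that one piece absorbs $\delta^{-\epsilon}$ and the other gives a polynomially summable decay in $n$ — and your treatment of that step (including the separate handling of the finitely many small-$n$ points) is complete. As an incidental remark, the paper's displayed chain compares $e^{|g_j|}$ against $e^M\langle j\rangle^d$, which would produce the borderline sum $\sum_j\langle j\rangle^{-d}$; the exponent should be strictly larger than $d$ to converge, as you correctly ensured by using $(d+1)$ in your splitting.
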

\begin{proof}
For each $n$ and a small $\epsilon > 0$, we have a constant $C$, 
$$\mathbb{E}e^{|g_n(\omega)|} \leq C.$$  
Set $M=\frac{1}{\dd^\epsilon}$, and the we have 
$$
\mathbb{E} \left|\frac{e^{|g_n(\omega)|}}{e^M}\right|\leq Ce^{-\frac{1}{\dd^{\epsilon}}}
$$
Then we obtain,
$$
Ce^{-\frac{1}{\dd^{\epsilon}}} > \mathbb{E} \left|\frac{e^{|g_n(\omega)|}}{e^{M}}\right| \geq \sum_{j\in \ZZZ^d}\mathbb{P}(e^{|g_j(\omega)|}\geq e^{M} \langle j\rangle^{d})= \sum_{j\in \ZZZ^d}\mathbb{P}({|g_j(\omega)|}\geq \frac{1}{\dd^\epsilon} + d \log \langle j\rangle).
$$

Exclude $\Omega_\dd^c := \cup_{j} \{|g_j(\omega)|\geq \frac{1}{\dd^\epsilon} + d \log \langle j\rangle\}$ from $\Omega$, for all $\omega \in \Omega_\dd$, we have 
$$
|g_n(\omega)|\leq  \frac{1}{\dd^\epsilon} + d \log \langle n\rangle\lesssim \frac{1}{\delta^\epsilon} \log (\langle n\rangle+1), \text{for } n\in \ZZZ^d.
$$
with $\mathbb{P}(\Omega_\dd^c)<Ce^{-\frac{1}{\dd^{\epsilon}}}$.

\end{proof}

\begin{lem}[Lemma 3.1 in \cite{nahmod2015almost}]\label{LargeDeviation}
Let $\{g_n(\omega)\}_{n}$ be a sequence of complex i.i.d. mean zero Gaussian random variables on a probability space $(\Omega, A,\mathbb{P})$ and $(c_n) \in \ell^2$. Define
\begin{equation}
F(\omega) := \sum_{n} c_n g_n(\omega).
\end{equation}
Then there exists $C > 0$ such that for every $\lambda > 0$ we have
\begin{equation}
\mathbb{P}(\{\omega : |F(\omega)| > \lambda\}) \leq \exp(\frac{-C \lambda^2}{\|F(\omega)\|^2_{L^2(\Omega)}}).
\end{equation}
As a consequence there exists $C>0$ such that for every $q\geq 2$ and every $(c_n)\in \ell^2$,
$$
\|\sum_n c_n g_n(\omega)\|_{L^q(\Omega)} \leq C\sqrt{q} (\sum_n |c_n|^2)^{\frac{1}{2}}.
$$
\end{lem}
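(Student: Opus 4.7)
The plan is to exploit the fact that $F$, being a linear combination of i.i.d. complex Gaussians, is itself a (complex) centered Gaussian random variable; the tail bound is then the classical Gaussian concentration estimate, and the $L^q$ bound follows by integrating the tail.

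First I would split into real and imaginary parts. Write $g_n = g_n^{(1)} + i g_n^{(2)}$ with $\{g_n^{(1)}, g_n^{(2)}\}_n$ independent real centered Gaussians (with a common variance), and correspondingly $F = F_1 + i F_2$. Each $F_j$ is a real linear combination of independent real Gaussians, hence itself a real centered Gaussian with variance $\sigma_j^2 \leq \|F\|_{L^2(\Omega)}^2$; note that by orthogonality $\|F\|_{L^2(\Omega)}^2 = c\sum_n |c_n|^2$ for a constant depending only on $\mathbb{E}|g_0|^2$.

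Next I would apply the standard Chernoff argument to each $F_j$: from the moment generating function $\mathbb{E}[e^{tF_j}] = e^{t^2\sigma_j^2/2}$, Markov's inequality yields $\mathbb{P}(F_j > \mu) \leq e^{-t\mu + t^2\sigma_j^2/2}$, and optimizing in $t > 0$ (with $t = \mu/\sigma_j^2$) gives the one-sided bound $e^{-\mu^2/(2\sigma_j^2)}$. Applying the same to $-F_j$ and then combining via the union bound
$$
\{|F(\omega)| > \lambda\} \subset \{|F_1(\omega)| > \lambda/\sqrt{2}\}\cup \{|F_2(\omega)| > \lambda/\sqrt{2}\}
$$
produces the claimed sub-Gaussian tail estimate for a suitable absolute constant $C > 0$.

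For the $L^q$ consequence, I would use the layer-cake formula together with the tail bound just proven:
$$
\|F\|_{L^q(\Omega)}^q = q\int_0^\infty \lambda^{q-1}\mathbb{P}(|F(\omega)|>\lambda)\,d\lambda \lesssim q\int_0^\infty \lambda^{q-1}\exp\!\left(-\tfrac{C\lambda^2}{\|F\|_{L^2(\Omega)}^2}\right)d\lambda.
$$
After the substitution $u = C\lambda^2/\|F\|_{L^2(\Omega)}^2$ the right-hand side equals a constant multiple of $\|F\|_{L^2(\Omega)}^q\,\Gamma(q/2+1)$, and taking $q$-th roots together with Stirling's bound $\Gamma(q/2+1)^{1/q} \lesssim \sqrt{q}$ yields $\|F\|_{L^q(\Omega)} \leq C\sqrt{q}\,(\sum_n |c_n|^2)^{1/2}$. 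There is essentially no obstacle here: the only modest subtlety is that the $g_n$ are complex, which forces the real/imaginary split, and the $\sqrt{q}$ scaling is encoded precisely in the Gamma function estimate above. This is a textbook Gaussian large-deviation inequality, which is why the authors simply cite it from \cite{nahmod2015almost}.
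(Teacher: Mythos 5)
Your proof is correct, but note that the paper itself offers no argument to compare against: this lemma is quoted verbatim (as Lemma 3.1 of \cite{nahmod2015almost}, going back to Burq--Tzvetkov), so the author relies on the citation rather than reproving it. Your route is the standard one and is exactly what the cited sources do in spirit: reduce to real centered Gaussians, get the sub-Gaussian tail from the moment generating function via Chernoff, and convert the tail into the $L^q(\Omega)$ bound by the layer-cake formula, with $\Gamma(q/2+1)^{1/q}\lesssim \sqrt q$ producing the $\sqrt q$ growth. Two minor points worth tightening if you wrote this out in full: (i) the union bound gives a prefactor $2$ in front of $\exp(-\lambda^2/(4\sigma^2))$, which cannot be absorbed into $C$ by a one-line rescaling for $\lambda \lesssim \sigma$; either invoke circular symmetry of the complex Gaussian $F$ (in which case $\mathbb{P}(|F|>\lambda)=\exp(-\lambda^2/\|F\|_{L^2}^2)$ exactly), or handle small $\lambda$ separately by noting $1-\mathbb{P}(|F|\le\lambda)\le 1-c\lambda^2/\sigma^2$ near $0$; in any case the $L^q$ consequence, which is all the paper actually uses (e.g.\ in Lemma \ref{Lem2}), is insensitive to the prefactor. (ii) When the $c_n$ are complex your $F_1,F_2$ mix the real and imaginary parts of the $g_n$; they are still centered real Gaussians with variances bounded by $\|F\|_{L^2(\Omega)}^2$, which is all your Chernoff step needs, so this is fine but deserves the one sentence you gave it.
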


\begin{lem}[Lemma 3.5 in \cite{colliander2008global}]\label{Lem2}
Let $f^\omega (x,t) = \sum c_n g_n(\omega) e^{i(n\cdot x +|n|^2t|)}$. Then, for $p,q\geq 2$, there exists $\delta_0, c, C>0$ such that
\begin{equation}\label{ExpLarge}
\mathbb{P}(\|f^\omega\|_{L^p_tL^q_x(\mathbb{T}^4\times [0, \delta])}>\lambda) < C\exp{(-\frac{c\lambda^2}{\delta^{\frac{2}{p}}\|c_n\|^2_{l^2_n}})}
\end{equation}
for $\delta <\delta_0$.
\end{lem}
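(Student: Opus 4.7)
The plan is to prove this via the standard chain: Khintchine/Gaussian $L^r$-moment control at each point, Minkowski's integral inequality to move the probabilistic norm inside the space-time norm, and then Chebyshev with optimization over $r$ to produce the Gaussian tail.

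First I would fix $(x,t)$ and observe that $f^\omega(x,t)=\sum_n c_n e^{i(n\cdot x+|n|^2 t)}\, g_n(\omega)$ is a linear combination of the i.i.d.\ Gaussians with coefficients of modulus $|c_n|$. So by Lemma \ref{LargeDeviation}, for any $r\geq 2$,
\begin{equation*}
\|f^\omega(x,t)\|_{L^r(\Omega)} \leq C\sqrt{r}\,\Bigl(\sum_n |c_n|^2\Bigr)^{1/2} = C\sqrt{r}\,\|c_n\|_{\ell^2_n},
\end{equation*}
uniformly in $(x,t)$.

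Next, for $r\geq \max(p,q)$ I would invoke Minkowski's integral inequality twice (once to swap $L^r(\Omega)$ past $L^q_x$, once to swap past $L^p_t$), giving
\begin{equation*}
\bigl\|\,\|f^\omega\|_{L^p_t L^q_x(\mathbb{T}^4\times[0,\delta])}\bigr\|_{L^r(\Omega)} \leq \bigl\|\,\|f^\omega(x,t)\|_{L^r(\Omega)}\bigr\|_{L^p_t L^q_x} \leq C\sqrt{r}\,\|c_n\|_{\ell^2_n}\,\delta^{1/p},
\end{equation*}
where the last step uses that the pointwise $L^r(\Omega)$ bound is independent of $(x,t)$ and that $\mathbb{T}^4$ has finite measure. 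Then Chebyshev's inequality gives
\begin{equation*}
\mathbb{P}\bigl(\|f^\omega\|_{L^p_t L^q_x}>\lambda\bigr) \leq \lambda^{-r}\bigl(C\sqrt{r}\,\|c_n\|_{\ell^2_n}\,\delta^{1/p}\bigr)^r.
\end{equation*}

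Finally I would optimize in $r$. Choosing $r = c\lambda^2/\bigl(\|c_n\|_{\ell^2_n}^2\,\delta^{2/p}\bigr)$ for a suitable small constant $c>0$ converts the right-hand side into $\exp\!\bigl(-c\lambda^2/(\delta^{2/p}\|c_n\|_{\ell^2_n}^2)\bigr)$, which is exactly the target. The only bookkeeping obstacle is the constraint $r\geq \max(p,q)$: this is only a concern when $\lambda$ is comparable to or smaller than $\sqrt{\max(p,q)}\,\|c_n\|_{\ell^2_n}\delta^{1/p}$, but in that regime the claimed bound is $O(1)$ and so holds trivially after absorbing the shortfall into the constant $C$ (and by choosing $\delta_0$ small enough to control $\delta^{2/p}$). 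No step requires a genuinely new idea beyond the Gaussian moment bound already provided by Lemma \ref{LargeDeviation}.
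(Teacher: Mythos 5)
Your proposal is correct and follows essentially the same route as the paper: the Gaussian moment bound from Lemma \ref{LargeDeviation}, Minkowski's integral inequality to pull $L^r(\Omega)$ inside $L^p_tL^q_x$ (gaining the factor $\delta^{1/p}$), Chebyshev, and then the choice $r \sim \lambda^2/(\delta^{2/p}\|c_n\|_{\ell^2_n}^2)$, with the small-$\lambda$ regime handled trivially. Your added care about requiring $r\geq\max(p,q)$ for the double application of Minkowski is a harmless refinement of the paper's stated condition $r\geq p$.
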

\begin{proof}
By Lemma \ref{LargeDeviation}, there exists  $C>0$ such that 
$$
\|\sum_n c_n g_n(\omega)\|_{L^r(\Omega)}\leq C\sqrt{r}(\sum_n |c_n|^2)^\frac{1}{2},
$$
for every $r\geq 2$. By Minkowski integral inequality, we have
\begin{align*}
\EEE(\|f^\omega\|_{{L^p_tL^q_x(\mathbb{T}^4\times [0, \delta])}}^r)^{\frac{1}{r}}
&\leq \big\|\|f^\omega\|_{L^r(\Omega)}\big\|_{{L^p_tL^q_x(\mathbb{T}^4\times [0, \delta])}}\\
&\leq C\sqrt{r} \big\|\|c_n\|_{l^2_n}\big\|_{{L^p_tL^q_x(\mathbb{T}^4\times [0, \delta])}}\\
&\leq C\sqrt{r} \delta^{\frac{1}{p}}\|c_n\|_{l^2_n}
\end{align*}
for $r\geq p$. By Chebyshev's Inequality, we have
\begin{equation}\label{LpLarge}
\PPP(\|f^\omega\|_{L^p_tL^q_x(\mathbb{T}^4\times [0, \delta])}>\lambda) < C^r \lambda^{-r} r^{\frac{r}{2}}\delta^{\frac{r}{p}} \|c_n\|_{l_n^2}^r.
\end{equation}

If $\lambda < \sqrt{p}C\delta^{-\frac{1}{p}}e\|c_n\|_{l^2_n}$, then (\ref{ExpLarge}) easily holds.

If  $\lambda\geq \sqrt{p}C\delta^{\frac{1}{p}}e\|c_n\|_{l^2_n}$, then we set
$$
r = [\frac{\lambda}{Ce\delta^{\frac{1}{p}}\|c_n\|_{l^2_n}}]^2 \quad (\geq p).
$$

So that (\ref{LpLarge}) yields (\ref{ExpLarge}).
\end{proof}

\begin{cor}\label{CorLp}
Let $p, q \geq 2$, and $P_N R  = \sum_{|n|\sim N}  \frac{g_n(\omega)}{\langle n\rangle^{d-1-\alpha}} e^{i(n\cdot x +|n|^2t|)}$, where $N$ is a dyadic coordinate. There exists $A\subset \Omega$, $C$ and $c>0$, with $\PPP(A) < Ce^{-\frac{1}{\delta^c}}$, such that for each $\omega \in A^c$ and each dyadic coordinate $N$, we have
$$
\|P_N R\|_{{L^p_tL^q_x([0, \delta] \times\mathbb{T}^d)}} \leq \delta^c \frac{\log N}{N^{s_c-\alpha}}.
$$
for $\delta < \delta_0$.
\end{cor}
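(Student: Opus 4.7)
The corollary is proved by applying Lemma~\ref{Lem2} separately to each dyadic block $P_N R$ and then taking a union bound over dyadic frequencies $N$.

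First I would compute the $\ell^2_n$ norm of the Fourier coefficients of $P_N R$. These are $c_n = \langle n \rangle^{-(d-1-\alpha)} \mathbf{1}_{|n|\sim N}$, and using $s_c = \frac{d}{2}-1$ for the cubic NLS a direct lattice count gives
$$
\|c_n\|_{\ell^2_n}^2 \;\sim\; N^d \cdot N^{-2(d-1-\alpha)} \;=\; N^{-2(s_c-\alpha)}.
$$
Next, I would fix a small constant $c_1 \in (0, 1/p)$ and apply Lemma~\ref{Lem2} with the threshold $\lambda_N := \delta^{c_1}\frac{\log N}{N^{s_c-\alpha}}$, which yields
$$
\mathbb{P}\!\left(\|P_N R\|_{L^p_t L^q_x([0,\delta]\times\TTT^d)} > \lambda_N\right) \;\leq\; C\exp\!\left(-c\,\delta^{\,2c_1-2/p}(\log N)^2\right) \;=\; C\exp\!\left(-c\,(\log N)^2/\delta^{r}\right),
$$
where $r := 2/p - 2c_1 > 0$. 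The $(\log N)^2$ factor provides summability over dyadic $N$, while the $1/\delta^{r}$ produces the stretched exponential in $\delta$ that the statement calls for.

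Finally, I would set
$$
A \;:=\; \bigcup_{N\ \mathrm{dyadic}} \{\omega : \|P_N R\|_{L^p_t L^q_x} > \lambda_N\}.
$$
Writing the dyadic frequencies as $N = 2^k$ with $k\geq 1$ and applying a union bound,
$$
\PPP(A) \;\leq\; \sum_{k \geq 1} C\exp\!\left(-c(\log 2)^2 k^2/\delta^{r}\right) \;\leq\; C'\exp\!\left(-c'/\delta^{r}\right),
$$
which matches the claimed bound after choosing $c>0$ strictly smaller than both $c_1$ and $r$ and shrinking $\delta_0$ to absorb the multiplicative constants. On $A^c$ the desired inequality $\|P_N R\|_{L^p_t L^q_x} \leq \delta^c \log N / N^{s_c-\alpha}$ then holds simultaneously for every dyadic $N$.

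The only minor point of care is the lowest dyadic block ($N \sim 1$), where $\log N$ vanishes and the stated bound is empty; since this block involves only finitely many Fourier modes, it is handled separately by a direct application of Lemma~\ref{Lem2} with a fixed threshold. Otherwise the argument is a mechanical packaging of the large-deviation estimate across dyadic scales, so I do not anticipate a substantive obstacle.
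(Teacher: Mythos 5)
Your proposal is correct and follows essentially the same route as the paper: apply Lemma~\ref{Lem2} to each dyadic block with threshold $\delta^{c}\log N/N^{s_c-\alpha}$ (the paper takes the exponent $\tfrac{1}{2p}$, a particular instance of your $c_1<1/p$), use $\|P_N R\|_{L^2_x}\sim N^{-(s_c-\alpha)}$, and then sum the resulting $\exp(-c(\log N)^2/\delta^{r})$ bounds over dyadic $N$ by a union bound. Your extra remark about the lowest block $N\sim 1$ is a reasonable boundary convention that the paper passes over silently, and does not change the substance.
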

\begin{proof}
By Lemma \ref{Lem2}, for each dyadic coordinate $N$, set $\lambda = \delta^{\frac{1}{2p}} {\log N}{\|P_N R\|_{L^2_x}}$, there exists $A_N\subset \Omega$, such that for $\omega \in A_N^c$, with $\PPP(A_N) < C\exp(-\frac{\log N}{\delta^{\frac{1}{p}}})$ we obtain that
$$
\|P_N R\|_{{L^p_tL^q_x([0, \delta]\times\mathbb{T}^d)}} \leq \delta^{\frac{1}{2p}} \frac{\log N}{N^{s_c-\alpha}},
$$
since $\|P_N R\|_{L^2_x} \sim \frac{1}{N^{s_c-\alpha}}$.

Set $A=\cup_{N} A_N$, and $c=\frac{1}{p}$, then we have
\begin{align*}
\PPP(A) &\leq \sum_N \PPP(A_N)
\leq \sum_N C \exp(-\frac{(\log N)^2}{\delta^{c}})\\
&\leq \sum_{k=1}^\infty Ce^{-\frac{k^2}{\delta^{c}}}
\leq \sum_{k=1}^\infty Ce^{-\frac{k}{\delta^{\frac{1}{p}}}}\\
&= \frac{Ce^{-\frac{1}{\delta^c}}}{1-e^{-\frac{1}{\delta^c}}}< 2C e^{-\frac{1}{\delta^c}}.
\end{align*}
when $\delta$ is small enough. 
\end{proof}

\begin{lem}[Proposition 3.1 in \cite{nahmod2015almost}]\label{MLD}
For fixed $n\in \ZZZ^d$, let
$$
D(n) = \{(n_1, n_2, n_3)\in \ZZZ^d\times
\ZZZ^d\times\ZZZ^d : n= n_1-n_2+n_3,\  n_2\neq n_1,\ n_2 \neq n_3, \ n_1\neq n_3\}.
$$
Given $\{c_{n_1, n_2, n_3}\}_{l_2(D(n))}$, define $F_n$ by
$$
F_n := \sum_{D(n)} c_{n_1, n_2, n_3} g_{n_1}(\omega) \overline{(g_{n_2})}(\omega) g_{n_3}(\omega).
$$
Then there exists $C>0$ such that for every $\lambda>0$ we have
$$
\PPP(\{\w : |F_n(\w)|>\lambda\})\leq \exp\left(\frac{-C\lambda^{2/3}}{\|F_n(\w)\|^{2/3}_{L^2(\Omega)}}\right).
$$
\end{lem}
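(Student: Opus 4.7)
The plan is to apply a standard Wiener chaos (hypercontractivity) estimate for polynomials of degree three in independent Gaussians, combined with Chebyshev's inequality optimized in $p$. This is a trilinear analogue of Lemma \ref{LargeDeviation}, where the exponent $2$ in the inner bound is replaced by $2/3$ precisely because $F_n$ is cubic rather than linear in the Gaussians.

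First, I would observe that the conditions defining $D(n)$, namely $n_1 \neq n_2$, $n_2 \neq n_3$ and $n_1 \neq n_3$, are exactly what is needed to place
$$F_n(\omega) = \sum_{D(n)} c_{n_1,n_2,n_3}\, g_{n_1}(\omega)\, \overline{g_{n_2}(\omega)}\, g_{n_3}(\omega)$$
in the third (homogeneous) Wiener chaos associated to the Gaussian sequence $(g_n)_{n\in \ZZZ^d}$. Indeed, these restrictions rule out any diagonal contractions of the form $|g_k|^2$, which are not mean zero and would place a term in a lower-order chaos. With these off-diagonal restrictions, $F_n$ is a homogeneous polynomial of order three in independent complex Gaussians.

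Then I would invoke the hypercontractivity estimate for Wiener chaos: for any polynomial $P$ of degree at most $k$ in independent Gaussian variables and any $p \geq 2$,
$$\|P\|_{L^p(\Omega)} \leq (p-1)^{k/2}\, \|P\|_{L^2(\Omega)}.$$
Applied with $k = 3$, this yields $\|F_n\|_{L^p(\Omega)} \leq (p-1)^{3/2}\, \|F_n\|_{L^2(\Omega)}$ for every $p \geq 2$. Combining this with Chebyshev's inequality gives
$$\PPP\bigl(|F_n| > \lambda\bigr) \leq \lambda^{-p}\, \|F_n\|_{L^p(\Omega)}^p \leq \left(\frac{C\, p^{3/2}\, \|F_n\|_{L^2(\Omega)}}{\lambda}\right)^{p}$$
for every $p \geq 2$. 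Optimizing by setting $p = c\,\bigl(\lambda / \|F_n\|_{L^2(\Omega)}\bigr)^{2/3}$, with $c > 0$ chosen small enough so that the quantity in parentheses is bounded by $1/e$, produces the desired bound
$$\PPP\bigl(|F_n| > \lambda\bigr) \leq \exp\left(-\frac{C\lambda^{2/3}}{\|F_n\|_{L^2(\Omega)}^{2/3}}\right).$$
When the optimized value of $p$ is less than $2$ (that is, for $\lambda$ comparable to $\|F_n\|_{L^2(\Omega)}$), the inequality is trivial after adjusting the constant $C$.

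The main point requiring care is the verification that the off-diagonal restriction encoded in $D(n)$ genuinely places $F_n$ in the third Wiener chaos and not into some lower-order chaos, as the $3/2$ exponent in the hypercontractivity bound, and hence the $2/3$ exponent in the conclusion, depends on this. The remaining optimization in $p$ is the standard route that also underlies Lemma \ref{LargeDeviation} and Lemma \ref{Lem2}.
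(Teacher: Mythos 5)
The paper never proves this lemma — it is quoted verbatim from Proposition 3.1 of Nahmod--Staffilani — and your argument (hypercontractivity for degree-$3$ Gaussian polynomials, $\|F_n\|_{L^p(\Omega)}\le (p-1)^{3/2}\|F_n\|_{L^2(\Omega)}$, followed by Chebyshev and optimization in $p$) is exactly the standard proof underlying that citation, i.e.\ the cubic analogue of Lemma \ref{LargeDeviation}, so it is correct and matches the intended approach; note only that homogeneity in the third chaos is not actually needed, since the hypercontractive bound holds for any polynomial of degree at most $3$, the off-diagonal conditions in $D(n)$ mattering mainly for computing $\|F_n\|_{L^2(\Omega)}$. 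The one soft spot is the final remark that the regime where the optimized $p$ falls below $2$ is ``trivial'': with no multiplicative constant in front of the exponential the claimed bound is not automatic for $\lambda \lesssim \|F_n\|_{L^2(\Omega)}$, but this caveat is inherited from the statement as written (here and in the cited source) and is harmless because the estimate is only ever invoked for large $\lambda$.
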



\section{Function spaces}
In this section, we introduce $X^s$ and $Y^s$ spaces which are based on the atomic space $U^p$ and $V^p$ which were firstly applied to PDEs in \cite{hadac2009well}\cite{herr2011global}\cite{herr2014strichartz}.
$\mathcal{H}$ is a separable Hilbert space on $\mathbb{C}$, and $\mathcal{Z}$ denotes the set of finite partitions $-\infty = t_0 < t_1 < ... <t_K = \infty$ of the real line, with the convention that  $v(\infty) := 0$ for any function $v : \mathbb{R} \to \mathcal{H}$.
\begin{definition}[Definition 2.1 in \cite{herr2011global}]
Let $1\leq p < \infty$. For $\{t_k\}_{k=0}^K \in \mathcal{Z}$ and $\{\phi_k\}_{k=0}^{K-1} \subset \mathcal{H}$ with $\sum_{k=0}^K \|\phi_k\|_{\mathcal{H}}^p = 1$ and $\phi_0 = 0$. A $U^p$-atom is a piecewise defined function  $a : \mathbb{R} \to \mathcal{H}$ of the form
$$
a = \sum_{k=1}^K \mathds{1}_{[t_{k-1}, t_k)} \phi_{k-1}.
$$
The atomic Banach space $U^p(\mathbb{R}, \mathcal{H})$ is then defined to be the set of all functions $u : \mathbb{R} \to \mathcal{H}$ such that 
$$
u = \sum_{j=1}^{\infty} \lambda_{j} a_j,\qquad  \text{for}\   U^p \text{-atoms}\   a_j,\qquad \{\lambda_j\}_j \in \ell^1,
$$
with the norm
$$
\|u\|_{U^p} : = \inf \{\sum_{j=1}^{\infty} |\lambda_j| : u = \sum_{j=1}^\infty \lambda_j a_j,\  \lambda_j \in \mathbb{C}\ \text{and}\ a_j\ \text{an}\ U^p\ \text{atom}\}.
$$
Here $\mathds{1}_{I}$ denotes the indicator function over the time interval $I$.
\end{definition}

\begin{definition}[Definition 2.2 in \cite{herr2011global}]
Let $1\leq p < \infty$. The Banach space $V^p(\mathbb{R}, \mathcal{H})$ is defined to be the set of all functions $v: \mathbb{R} \to \mathcal{H}$ with $v(\infty):=0$ and $v(-\infty) := \lim_{t\to -\infty} v(t)$ exists, such that 
$$
\|v\|_{V^p} : = \sup_{\{t_k\}_{k=0}^K\in \mathcal{Z}} (\sum_{k=1}^K \|v(t_k)-v(t_{k-1})\|^p_\mathcal{H})^\frac{1}{p}\quad
 \text{is}\  \text{finite}.
$$
Likewise, let $V_{-}^p$ denote the closed subspace of all $v \in V^p$ with $\lim_{t\to -\infty}v(t) = 0$. $V_{-, rc}^p$ means all right-continuous $V_{-}^p$ functions.
\end{definition}

\begin{remark}[Some embeding properties]\label{rmk:embedding}
Note that for $1\leq p \leq q < \infty$,
\begin{equation}
U^p(\mathbb{R}, \mathcal{H}) \hookrightarrow U^q(\mathbb{R}, \mathcal{H}) \hookrightarrow  L^{\infty}(\mathbb{R},\mathcal{H}),
\end{equation}
and functions in $U^p(\mathbb{R}, \mathcal{H})$ are right continuous, and $\lim_{t\to -\infty} u(t) = 0$ for each $u \in U^p(\mathbb{R}, \mathcal{H})$.  Also note that,
\begin{equation}
U^p(\mathbb{R}, \mathcal{H}) \hookrightarrow V^p_{-,rc} (\mathbb{R}, \mathcal{H}) \hookrightarrow U^q(\mathbb{R}, \mathcal{H}).
\end{equation}
\end{remark}

\begin{definition}[Definition 2.5 in \cite{herr2011global}]
For $s\in \mathbb{R}$, we let $U^p_{\Delta}H^s$, respectively $V^p_{\Delta}H^s$, be the space of all functions $u : \mathbb{R}\to H^s(\mathbb{T}^d)$ such that $t\mapsto e^{-it\Delta}u(t)$ is in $U^p(\mathbb{R}, H^s)$, respectively in  $V^p(\mathbb{R}, H^s)$ with norm
$$
\|u\|_{U^p(\mathbb{R}, H^s)} := \|e^{-it\Delta}u(t)\|_{U^p(\mathbb{R},H^s)}, 
\quad \|u\|_{V^p(\mathbb{R}, H^s)} := \|e^{-it\Delta}u(t)\|_{V^p(\mathbb{R},H^s)}.
$$
\end{definition}

\begin{definition}[Definition 2.6 in \cite{herr2011global}]\label{def:Xs}
For $s\in \mathbb{R}$, we define $X^s$ as the space of all functions $u : \mathbb{R}\to H^s(\mathbb{T}^d)$ such that for every $n\in \mathbb{Z}^d$, the map $t\mapsto e^{it|n|^2}\widehat{u(t)}(n)$ is in $U^2(\mathbb{R}, \mathcal{C})$, and with the norm
\begin{equation}
\|u\|_{X^s} : = (\sum_{n\in \mathbb{Z}^d}\langle n \rangle^{2s} \|e^{it|n|^2}\widehat{u(t)}(n)\|^2_{U_t^2})^{\frac{1}{2}}\quad \text{is finite}. 
\end{equation}
\end{definition}

\begin{definition}[Definition 2.7 in \cite{herr2011global}]
For $s\in \mathbb{R}$, we define $Y^s$  as the space of all functions $u : \mathbb{R}\to H^s(\mathbb{T}^d)$ such that for every $n\in \mathbb{Z}^d$, the map $t\mapsto e^{it|n|^2}\widehat{u(t)}(n)$ is in $V_{rc}^2(\mathbb{R}, \mathcal{C})$, and with the norm
\begin{equation}
\|u\|_{Y^s} : = (\sum_{n\in \mathbb{Z}^d}\langle n \rangle^{2s} \|e^{it|n|^2}\widehat{u(t)}(n)\|^2_{V_t^2})^{\frac{1}{2}}\quad \text{is finite}. 
\end{equation}
\end{definition}

Note that 
\begin{equation}\label{eq:Embedding}
U^2_{\Delta}H^s \hookrightarrow X^s \hookrightarrow Y^s \hookrightarrow V^2_{\Delta}H^s.
\end{equation}

\begin{prop}[Proposition 2.10 in \cite{hadac2009well}]\label{EstimateFreeSolution}
Suppose $u :=  e^{it\Delta}\phi$ which is a free Schr\"{o}dinger solution, then we obtain that
$$\|u\|_{X^s([0,\dd])} \leq \|\phi\|_{H^s}.$$
\end{prop}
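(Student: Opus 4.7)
The plan is to reduce the statement to the trivial observation that a single-jump step function is a $U^2$-atom, after unwinding the definition of $X^s$ applied to the free evolution.

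First I would compute the Fourier coefficients of the free solution: since $u(t,x) = e^{it\Delta}\phi(x)$, we have $\widehat{u(t)}(n) = e^{-it|n|^2}\widehat{\phi}(n)$, so the quantity appearing in the definition of the $X^s$ norm becomes
\begin{equation*}
e^{it|n|^2}\widehat{u(t)}(n) = \widehat{\phi}(n),
\end{equation*}
which is \emph{constant in $t$} for each fixed $n \in \mathbb{Z}^d$. This is the key simplification: the free evolution cancels exactly the phase built into the $X^s$ norm.

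Next I would estimate the $U^2$-norm of this constant on the time interval $[0,\delta]$. Working with the restriction norm, the natural extension of the constant function $t \mapsto \widehat{\phi}(n)$ on $[0,\delta)$ to all of $\mathbb{R}$ is $v_n(t) := \widehat{\phi}(n)\,\mathbf{1}_{[0,\infty)}(t)$. This $v_n$ is exactly of the atomic form: taking the partition $t_0=-\infty<t_1=0<t_2=\infty$ with $\phi_0=0$ and $\phi_1=\widehat{\phi}(n)/|\widehat{\phi}(n)|$ (assuming $\widehat{\phi}(n)\neq 0$; otherwise the norm is zero), the function $\mathbf{1}_{[0,\infty)}(t)\cdot\phi_1$ is a $U^2$-atom since $|\phi_1|^2=1$. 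Hence $\|v_n\|_{U^2(\mathbb{R},\mathbb{C})} \leq |\widehat{\phi}(n)|$, and therefore
\begin{equation*}
\bigl\|e^{it|n|^2}\widehat{u(t)}(n)\bigr\|_{U^2_t([0,\delta))} \leq \|v_n\|_{U^2(\mathbb{R},\mathbb{C})} \leq |\widehat{\phi}(n)|.
\end{equation*}

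Finally I would square, weight by $\langle n\rangle^{2s}$, and sum over $n \in \mathbb{Z}^d$:
\begin{equation*}
\|u\|_{X^s([0,\delta])}^2 = \sum_{n\in\mathbb{Z}^d}\langle n\rangle^{2s}\bigl\|e^{it|n|^2}\widehat{u(t)}(n)\bigr\|_{U^2_t([0,\delta))}^{2} \leq \sum_{n\in\mathbb{Z}^d}\langle n\rangle^{2s}|\widehat{\phi}(n)|^2 = \|\phi\|_{H^s}^2,
\end{equation*}
which is the desired bound. There is no real obstacle in this argument; the only point worth being careful about is the use of the restriction norm on $[0,\delta]$ so that the step-function extension is admissible — once that is handled, the statement is immediate from the definition of $X^s$ and the atomic structure of $U^2$.
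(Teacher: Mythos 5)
Your proposal is correct and follows essentially the same route as the paper: the phase $e^{it|n|^2}$ cancels the free evolution, leaving the constant $\widehat{\phi}(n)$, whose $U^2$ norm is at most $|\widehat{\phi}(n)|$, and summing gives the $H^s$ bound. The paper states this in one line; you merely make explicit the step-function atom and the restriction-norm extension, which is a faithful filling-in of the same argument.
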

\begin{proof}
Since $u :=  e^{it\Delta}\phi$, then $\|u\|_{X^s}  = (\sum_{n\in \mathbb{Z}^d}\langle n \rangle^{2s} \|\widehat{\phi}(n)\|^2_{U_t^2})^{\frac{1}{2}} \leq \|\phi\|_{H^s}$.
\end{proof}

\begin{definition}[The corresponding restriction spaces to a time interval $I$]
\label{def:LocalNorm}
For $p\geq 1$ and a bounded time interval $I$. Define $U^p(I)$, $V^p(I)$, $X^s(I)$ and $Y^s(I)$ with the restriction norms:
\[
\|u\|_{U^p(I)} = \inf \{ \|\widetilde{u}\|_{U^p}:  \widetilde{u}(t) = u(t), t\in I\} \text{ and } \|u\|_{V^p(I)} = \inf \{ \|\widetilde{u}\|_{V^p}:  \widetilde{u}(t) = u(t), t\in I\};
\]
\[
\|u\|_{X^s(I)} = \inf \{ \|\widetilde{u}\|_{X^s}:  \widetilde{u}(t) = u(t), t\in I\} \text{ and } \|u\|_{Y^s(I)} = \inf \{ \|\widetilde{u}\|_{Y^s}:  \widetilde{u}(t) = u(t), t\in I\}.
\]

\end{definition}

\begin{prop}[Proposition 2.19 in \cite{hadac2009well}]\label{TransferPrinciple}
Let $T_0 : L_x^2 \times \cdots \times L_x^2 \to L^{1}_{x, loc}(\TTT^d)$ be an m-linear operator. Assume that for some $1\leq p \leq \infty$
\begin{equation}
\| T_0 (e^{it\Delta }\phi_1,\cdots, e^{it\Delta }\phi_m)\|_{L^p(\mathbb{R}\times\mathbb{T}^d)} \lesssim \prod_{i=1}^m\|\phi_i\|_{L^2(\mathbb{T}^d)}.
\end{equation}
Then, there exists an extension $T : U^p_{\Delta}\times \cdots \times U^p_{\Delta} \to L^p(\mathbb{R}\times\mathbb{T}^d)$ satisfying
\begin{equation}
\|T(u_1, \cdots, u_m)\|_{L^p(\mathbb{R}\times\mathbb{T}^d)} \lesssim \prod_{i=1}^m \|u_i\|_{U^p_{\Delta}};
\end{equation}
and  such that $T(u_1, \cdots , u_m) (t, \cdot) = T_0 (u_1(t), \cdots , u_m(t))(\cdot)$, a.e.
\end{prop}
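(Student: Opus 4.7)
The plan is to exploit the atomic structure of $U^p_\Delta$ in the most direct way: first prove the inequality on a single $U^p_\Delta$-atom in each argument, then extend by multilinearity to arbitrary atomic decompositions and pass to the infimum. The key observation is that $T_0$ is defined pointwise in $t$ (the conclusion requires $T(u_1,\ldots,u_m)(t,\cdot)=T_0(u_1(t),\ldots,u_m(t))(\cdot)$), so we may simply evaluate $T_0$ on the piecewise free-Schr\"odinger form of an atom and glue the resulting $L^p$ bounds on a common refinement of the partitions.

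More precisely, first I would fix $U^p_\Delta$-atoms $a_i(t)=\sum_{k=1}^{K_i}\mathbf{1}_{[t^i_{k-1},t^i_k)}(t)\,e^{it\Delta}\phi^i_{k-1}$, with $\sum_k\|\phi^i_{k-1}\|_{L^2}^p=1$. Let $\{J_\ell\}_\ell$ be the common refinement of the $m$ partitions. By multilinearity of $T_0$ in its arguments, on each $J_\ell$ we have $T_0(a_1(t),\ldots,a_m(t))=T_0(e^{it\Delta}\psi^1_\ell,\ldots,e^{it\Delta}\psi^m_\ell)$ for the appropriate $\psi^i_\ell \in \{\phi^i_0,\ldots,\phi^i_{K_i-1}\}$. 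Applying the hypothesis on each $J_\ell\times\mathbb{T}^d$ and summing in $\ell^p$ (or taking $\sup$ when $p=\infty$), together with the disjointness of the $J_\ell$, yields
\begin{equation*}
\|T_0(a_1,\ldots,a_m)\|_{L^p(\mathbb{R}\times\mathbb{T}^d)}^p \;\lesssim\; \sum_{\ell}\,\prod_{i=1}^m\|\psi^i_\ell\|_{L^2}^p \;\leq\; \prod_{i=1}^m\Big(\sum_{k}\|\phi^i_k\|_{L^2}^p\Big) \;=\; 1,
\end{equation*}
where the last inequality is the standard Fubini-style decoupling over the product grid, using that each $J_\ell$ is a subset of a unique cell of each partition.

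Next I would pass from atoms to general $u_i\in U^p_\Delta$. Given atomic decompositions $u_i=\sum_{j}\lambda^i_j a^i_j$ with $\{\lambda^i_j\}\in\ell^1$, define
\begin{equation*}
T(u_1,\ldots,u_m)(t,x) \;:=\; \sum_{j_1,\ldots,j_m}\lambda^1_{j_1}\cdots\lambda^m_{j_m}\,T_0(a^1_{j_1}(t),\ldots,a^m_{j_m}(t))(x).
\end{equation*}
The atomic bound above together with the triangle inequality in $L^p$ shows that the series converges absolutely in $L^p(\mathbb{R}\times\mathbb{T}^d)$ with norm $\lesssim \prod_i \sum_{j_i}|\lambda^i_{j_i}|$; taking the infimum over atomic decompositions of each $u_i$ yields the desired bound $\|T(u_1,\ldots,u_m)\|_{L^p}\lesssim\prod_i\|u_i\|_{U^p_\Delta}$. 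Because each $a^i_j(t)$ agrees almost everywhere in $t$ with a free Schr\"odinger flow on each interval of its defining partition, the pointwise identity $T(u_1,\ldots,u_m)(t,\cdot)=T_0(u_1(t),\ldots,u_m(t))(\cdot)$ follows from the multilinearity of $T_0$ in each argument, a.e.\ in $t$.

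The one point that requires a little care — and is the main technical nuisance — is verifying that the definition of $T$ is independent of the chosen atomic decompositions and that the a.e.\ identity is preserved under the series limit; this is handled by observing that any two decompositions of the same $u_i$ differ by a null combination of atoms, which produces the zero function both before and after applying $T_0$ pointwise in $t$ on each cell of the refined partition. The case $p=\infty$ requires replacing the $\ell^p$ summation over cells by a supremum, and convergence of the series for $T$ then holds in $L^\infty$ by the same $\ell^1$ control on the coefficients $\lambda^i_j$.
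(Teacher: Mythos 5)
Your argument is essentially the standard proof: the paper itself does not prove Proposition \ref{TransferPrinciple} (it is imported from Proposition 2.19 of \cite{hadac2009well}), and both that proof and the paper's proof of the time-localized variant, Proposition \ref{timelocalTransferPrinciple}, proceed exactly as you do --- reduce to $U^p_\Delta$-atoms, decompose over the common refinement of the partitions (equivalently the intersections $\cap_{i} I_{k_i,i}$), apply the free-solution hypothesis cell by cell, decouple the $\ell^p$ sums over the product grid, and then extend using multilinearity and the $\ell^1$ summability of the atomic coefficients. One small remark: for the independence of the chosen decompositions and the a.e.\ identity, multilinearity alone only justifies finite atomic combinations (applying $T_0$ term by term to an infinite series would require continuity of $T_0$ on $L^2$, which is not assumed), so it is cleaner to define $T$ on finite combinations, where the pointwise identity is immediate, and pass to general $u_i\in U^p_\Delta$ by the bounded-extension/density argument your $L^p$ bound already provides --- the same level of care as in the cited source.
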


\begin{prop}\label{timelocalTransferPrinciple}
Let $T_0 : L_x^2 \times \cdots \times L_x^2 \to L^{1}_{x, loc}(\TTT^d)$ be m-linear operator. Assume that for some bounded time interval $I\subset \RRR$, and $1< q \leq \infty$ 
\begin{equation}\label{eq:timelocalTransferPrinciple2}
\left|\int_{J}\int_{\TTT^d} T_0 (e^{it\Delta }\phi_1,\cdots, e^{it\Delta }\phi_m)\, dxdt\right| \lesssim |J|^{\frac{1}{q}}\prod_{i=1}^m\|\phi_i\|_{L^2(\mathbb{T}^d)}, \qquad \text{for any } J\subset I.
\end{equation}

Then, for $1\leq p<\infty$ satisfying $\frac{1}{p}+\frac{1}{q} = 1$, there exists an extension $T : U^p_{\Delta}\times \cdots \times U^p_{\Delta} \to L^1_{x,t, loc}(I\times \TTT^d)$ satisfying
\begin{equation}\label{eq:timelocalTransferPrinciple1}
\left| \int_I \int_{\TTT^d} T(u_1, \cdots, u_m)\, dxdt \right| \lesssim |I|^{\frac{1}{q}}\prod_{i=1}^m \|u_i\|_{U^p_{\Delta}(I)};
\end{equation}
and  such that $T(u_1, \cdots , u_m) (t, \cdot) = T_0 (u_1(t), \cdots , u_m(t))(\cdot)$, a.e.
\end{prop}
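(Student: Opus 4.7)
The plan is to follow the template of the standard transfer principle (Proposition \ref{TransferPrinciple}), but with the integral pairing against the indicator $\mathds{1}_I$ playing the role of the global $L^p$ norm. The hypothesis (\ref{eq:timelocalTransferPrinciple2}) is naturally a "weak-type" bound paired against characteristic functions of subintervals, so the atomic structure of $U^p_\Delta$ will combine with a Hölder step in the duality $1/p+1/q=1$.

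First, I would reduce to atoms by multilinearity. Given extensions $\widetilde{u}_i \in U^p_\Delta$ of the $u_i \in U^p_\Delta(I)$, decompose $e^{-it\Delta}\widetilde{u}_i = \sum_{j_i}\lambda^{(i)}_{j_i}\, a^{(i)}_{j_i}$ into $U^p$-atoms with $\sum_{j_i}|\lambda^{(i)}_{j_i}|$ arbitrarily close to $\|\widetilde{u}_i\|_{U^p_\Delta}$. By multilinearity it suffices to prove (\ref{eq:timelocalTransferPrinciple1}) with $u_i = e^{it\Delta}a_i$ for a single atom
$a_i=\sum_{k_i=1}^{K_i}\mathds{1}_{[t^{(i)}_{k_i-1},t^{(i)}_{k_i})}\phi^{(i)}_{k_i-1}$, $\sum_{k_i}\|\phi^{(i)}_{k_i-1}\|_{L^2}^p\le 1$, with right-hand side $|I|^{1/q}$. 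Letting $\{J_{\mathbf{k}}\}_{\mathbf{k}=(k_1,\ldots,k_m)}$ denote the cells of the common refinement of the $m$ partitions, one has the pointwise identity
$$T(e^{it\Delta}a_1,\ldots,e^{it\Delta}a_m)(t,x) = \sum_{\mathbf{k}} \mathds{1}_{J_{\mathbf{k}}}(t)\,T_0\bigl(e^{it\Delta}\phi^{(1)}_{k_1-1},\ldots,e^{it\Delta}\phi^{(m)}_{k_m-1}\bigr)(x),$$
which defines $T$ consistently with the a.e. identification $T(u_1,\ldots,u_m)(t,\cdot)=T_0(u_1(t),\ldots,u_m(t))(\cdot)$ as in the standard proof.

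Integrating over $I$, applying the triangle inequality, and invoking the hypothesis (\ref{eq:timelocalTransferPrinciple2}) on each $J_{\mathbf{k}}\cap I$ gives
$$\Bigl|\int_I\!\int_{\TTT^d} T(e^{it\Delta}a_1,\ldots,e^{it\Delta}a_m)\,dxdt\Bigr| \lesssim \sum_{\mathbf{k}} |J_{\mathbf{k}}\cap I|^{1/q}\prod_{i=1}^m \|\phi^{(i)}_{k_i-1}\|_{L^2}.$$
Hölder's inequality in the tuple-indexed sum, with conjugate exponents $q$ and $p$, then yields
$$\sum_{\mathbf{k}} |J_{\mathbf{k}}\cap I|^{1/q}\prod_{i=1}^m \|\phi^{(i)}_{k_i-1}\|_{L^2} \le \Bigl(\sum_{\mathbf{k}} |J_{\mathbf{k}}\cap I|\Bigr)^{1/q}\Bigl(\sum_{\mathbf{k}} \prod_{i=1}^m \|\phi^{(i)}_{k_i-1}\|_{L^2}^{\,p}\Bigr)^{1/p}.$$
The first factor is at most $|I|^{1/q}$ because the cells $J_{\mathbf{k}}$ are pairwise disjoint, and the second factorizes as $\prod_{i=1}^m \bigl(\sum_{k_i}\|\phi^{(i)}_{k_i-1}\|_{L^2}^{\,p}\bigr)^{1/p} \le 1$ by the atomic normalization. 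This establishes the atomic case; summing back against $\prod_i|\lambda^{(i)}_{j_i}|$ and taking the infimum over extensions $\widetilde{u}_i$ of $u_i$ concludes (\ref{eq:timelocalTransferPrinciple1}).

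The routine steps (atomic decomposition, density, and the a.e. consistency of $T$ with $T_0$) are inherited from Proposition \ref{TransferPrinciple}; the one genuinely new ingredient is the combinatorial Hölder step, where the disjointness of common-refinement cells is what produces the sharp $|I|^{1/q}$ factor and where the multiplicativity of the atomic normalization over the $m$ factors is what keeps the second Hölder factor bounded by $1$. This is where the relation $1/p+1/q=1$ becomes essential—without it, the time-length gain on subintervals cannot be converted into a global $|I|^{1/q}$ gain.
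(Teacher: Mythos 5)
Your proposal is correct and takes essentially the same route as the paper's proof: reduce to $U^p_\Delta$-atoms by multilinearity, apply the hypothesis (\ref{eq:timelocalTransferPrinciple2}) on the intersections of the partition intervals, and convert the resulting sum into $|I|^{1/q}$ via H\"older with conjugate exponents $(q,p)$, disjointness of the cells, and the atomic normalization $\sum_{k_i}\|\phi^{(i)}_{k_i-1}\|_{L^2}^p\le 1$. The only cosmetic difference is that you perform a single H\"older step over the tuple-indexed common refinement (using that the product sum factorizes as $\prod_i\sum_{k_i}\|\phi^{(i)}_{k_i-1}\|_{L^2}^p$), whereas the paper iterates H\"older one index at a time using the telescoping identity $\sum_{k_1}|\cap_{i=1}^m I_{k_i,i}|=|\cap_{i=2}^m I_{k_i,i}|$; the two are equivalent.
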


\begin{remark}
In Hadac-Herr-Koch's paper \cite{hadac2009well}, they derived a "transfer principle" as Proposition \ref{TransferPrinciple}, which consider the $L^p$ norm of the multilinear operator $T$ over the whole time space $\RRR$, while Proposition \ref{timelocalTransferPrinciple} focus on the integral in time (or actually $L^1$ norm is also fine) on a finite time interval $I$. By a stronger assumption (which gives some better estimates on each small intervals J), Proposition \ref{timelocalTransferPrinciple} somehow takes advantage of the finite time interval to improve the bounds from $U^1$ norm to $U^p$. In the following proof of Proposition \ref{NonlinearEstimate}, the \textbf{Case B} heavily relies on Proposition \ref{timelocalTransferPrinciple}.
\end{remark}

\begin{proof}
By multi-linearity of $T_0$ and definition of $U^p$ norm, it will suffice to show that (\ref{eq:timelocalTransferPrinciple1}) is true for all $U_\Delta^p$-atoms $u_i$. Let $a_1, \cdots, a_m$ be $U_\Delta^p$-atoms given as 
\[
a_i = \sum_{k_i = 1}^{K_i} \mathds{1}_{\scriptsize{I_{k_i, i}}}e^{it\Delta}\phi_{k_i-1,i},\quad \text{for } i = 1, \cdots, m.
\]
where $I_{k_i, i} = [t_{k_i-1, i},t_{k_i, i})$, and
such that
 \begin{equation}\label{eq:atom}
\sum_{k_i =1}^{K_i} \|\phi_{k_i-1,i}\|_{L^2_x}^p = 1. 
\end{equation}
 Then, by (\ref{eq:timelocalTransferPrinciple2}), Cauchy-Schwartz inequality and by induction,

\begin{align}\notag
&\left|\int_I \int_{\TTT^d} T(a_1, \cdots, a_m)(t)\,dxdt\right|\\\notag
\leq&  \sum_{\scriptsize{\begin{subarray}{c} 1\leq k_1 \leq K_1 \\ \cdots \\ 1\leq k_m\leq K_m\end{subarray}}} \left| \int_{\cap_{i=1}^m I_{k_i, i}} \int_{\TTT^d} T_0 (e^{it\Delta}\phi_{k_1-1,1}, \cdots, e^{it\Delta}\phi_{k_m-1,m})\,dxdt \right|\\\label{eq:atom1}
\leq & \sum_{\scriptsize{\begin{subarray}{c} 1\leq k_1 \leq K_1 \\ \cdots \\ 1\leq k_m\leq K_m\end{subarray}}} |\cap_{i=1}^m I_{k_i, i}|^{\frac{1}{q}} \prod_{i=1}^m \|\phi_{k_i-1,i}\|_{L^2_x}\\\label{eq:atom2}
\leq & \sum_{\scriptsize{\begin{subarray}{c} 1\leq k_2 \leq K_2 \\ \cdots \\ 1\leq k_m\leq K_m\end{subarray}}}  \prod_{i=2}^m \|\phi_{k_i-1,i}\|_{L^2_x}
\left(\sum_{1\leq k_1\leq K_1} |\cap_{i=1}^m I_{k_i, i}|  \right)^{\frac{1}{q}} 
\left(\sum_{1\leq k_1\leq K_1} \|\phi_{k_1-1,1}\|_{L_x^2}^p  \right)^{\frac{1}{p}}
\end{align}

For fixed $k_2, k_3, \cdots, k_m$, since
\[
I_{k_2,2}\cap\cdots\cap I_{k_m, m} = \cup_{1\leq k_1\leq K_1} \left(I_{k_1, 1}\cap I_{k_2,2}\cap \cdots\cap I_{k_m,m}\right),
\]
we have 
\begin{equation}\label{eq:Ik}
\left(\sum_{1\leq k_1\leq K_1} |\cap_{i=1}^m I_{k_i, i}|  \right)^{\frac{1}{q}} =  |\cap_{i=2}^m I_{k_i, i}|^{\frac{1}{q}}.
\end{equation}

Based on (\ref{eq:atom}) (\ref{eq:Ik}) (\ref{eq:atom2}), we obtain that
\begin{align*}
&|\int_I \int_{\TTT^d} T(a_1, \cdots, a_m)(t)\,dxdt|\\
\leq & \sum_{\scriptsize{\begin{subarray}{c} 1\leq k_2 \leq K_2 \\ \cdots \\ 1\leq k_m\leq K_m\end{subarray}}}  \prod_{i=2}^m \|\phi_{k_i-1,i}\|_{L^2_x}
\left(\sum_{1\leq k_1\leq K_1} |\cap_{i=1}^m I_{k_i, i}|  \right)^{\frac{1}{q}} 
\left(\sum_{1\leq k_1\leq K_1} \|\phi_{k_1-1,1}\|_{L_x^2}^p  \right)^{\frac{1}{p}}\\
\leq & \sum_{\scriptsize{\begin{subarray}{c} 1\leq k_2 \leq K_2 \\ \cdots \\ 1\leq k_m\leq K_m\end{subarray}}} |\cap_{i=2}^m I_{k_i, i}|^{\frac{1}{q}} \prod_{i=2}^m \|\phi_{k_i-1,i}\|_{L^2_x}
\end{align*}

If we iterate (\ref{eq:atom1}) (\ref{eq:atom2}) on $k_2, k_3, \cdots, k_m$, finally we obtain that

\begin{align*}
&|\int_I \int_{\TTT^d} T(a_1, \cdots, a_m)(t)\,dxdt|\\
\leq & \sum_{\scriptsize{\begin{subarray}{c} 1\leq k_2 \leq K_2 \\ \cdots \\ 1\leq k_m\leq K_m\end{subarray}}} |\cap_{i=2}^m I_{k_i, i}|^{\frac{1}{q}} \prod_{i=2}^m \|\phi_{k_i-1,i}\|_{L^2_x}\\
\leq & \sum_{\scriptsize{\begin{subarray}{c} 1\leq k_3 \leq K_3 \\ \cdots \\ 1\leq k_m\leq K_m\end{subarray}}} |\cap_{i=3}^m I_{k_i, i}|^{\frac{1}{q}} \prod_{i=3}^m \|\phi_{k_i-1,i}\|_{L^2_x}\\
&\cdots\\
\leq & |I|^{\frac{1}{q}}.
\end{align*}
So we obtain (\ref{eq:timelocalTransferPrinciple1}).
\end{proof}

\begin{prop}[Proposition 2.20 in \cite{hadac2009well}]\label{Interpolation1}
Let $q_1, ... , q_m > 2$ ($m \in \mathbb{N}$), $E$ be a Banach space and $T: U_{\Delta}^{q_1} \times \cdots \times U_{\Delta}^{q_m} \to E$ be a bounded m-linear operator with
\begin{equation}
\|T(u_1, \cdots , u_m)\|_{E} \leq C \prod_{i=1}^m \|u_i\|_{U_{\Delta}^{q_i}}.
\end{equation} 
And also assume there exists $0<C_2<C$ such that we hold,
\begin{equation}
\|T(u_1, \cdots , u_m)\|_{E} \leq C_2 \prod_{i=1}^m \|u_i\|_{U_{\Delta}^{2}}.
\end{equation}
Then, $T$ satisfies the estimate
\begin{equation}
\|T(u_1, \cdots , u_m)\|_{E} \leq C_2(\log\frac{C}{C_2}+1) \prod_{i=1}^m \|u_i\|_{V_{\Delta}^{2}}, \quad u_i\in V^2_{rc}, \ i=1,...,m.
\end{equation}
\end{prop}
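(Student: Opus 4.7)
The proof proceeds by interpolation between the two hypotheses via a telescoping decomposition of $V^2_{rc}$ functions into step functions living at dyadic resolutions, following the standard strategy of \cite{hadac2009well}. The plan is, first, for each $v_i \in V^2_\Delta$ with $\|v_i\|_{V^2_\Delta} \leq 1$, to construct via a greedy stopping-time argument (in the Schr\"odinger-conjugated frame $e^{-it\Delta}v_i$) a sequence of partitions $\{t^{(i)}_{j,k}\}_{j=0}^{N^{(i)}_k}$ indexed by a dyadic level $k \geq 0$. I would choose the $t^{(i)}_{j,k}$ so that each consecutive increment of $e^{-it\Delta}v_i$ has $\mathcal{H}$-norm comparable to $2^{-k}$; by the definition of the $V^2$-norm, the number of such points satisfies $N^{(i)}_k \lesssim 2^{2k}$. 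Let $w^{(i)}_k$ be the corresponding piecewise-constant approximation (in the rotated frame) agreeing with $v_i$ at the stopping times, so that $w^{(i)}_k \to v_i$.

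Second, I would analyze the telescoping differences $\Delta^{(i)}_k := w^{(i)}_k - w^{(i)}_{k-1}$. Each $\Delta^{(i)}_k$ is a $U^p_\Delta$-atom, up to a normalizing constant, built from at most $N^{(i)}_k \lesssim 2^{2k}$ pieces each of $\mathcal{H}$-size $\lesssim 2^{-k}$, so that for every $p \geq 2$,
\[
\|\Delta^{(i)}_k\|_{U^p_\Delta} \;\lesssim\; \bigl(N^{(i)}_k\bigr)^{1/p} \cdot 2^{-k} \;\lesssim\; 2^{k(2/p-1)}.
\]
In particular, $\Delta^{(i)}_k$ is uniformly bounded in $U^2_\Delta$ and decays geometrically in $U^{q_i}_\Delta$ at rate $2^{-k\epsilon_i}$ with $\epsilon_i := 1 - 2/q_i > 0$.

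Third, by multilinearity and the convergence $w^{(i)}_k \to v_i$, I would expand
\[
T(v_1, \ldots, v_m) \;=\; \sum_{k_1,\ldots,k_m \geq 1} T\bigl(\Delta^{(1)}_{k_1}, \ldots, \Delta^{(m)}_{k_m}\bigr),
\]
and on each summand apply whichever of the two hypotheses yields the smaller bound, namely $\min\!\bigl\{C_2,\ C\,\prod_i 2^{-k_i \epsilon_i}\bigr\}$, up to universal constants. Splitting the sum at a threshold $K \asymp \log(C/C_2)$ (using the $U^2$-bound when $\sum k_i \epsilon_i \leq K$ and the $U^{q_i}$-bound otherwise), and bounding the geometric tail by the boundary term, produces the claimed bound $C_2(\log(C/C_2) + 1)\prod_i \|v_i\|_{V^2_\Delta}$.

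The main obstacle is the greedy partition construction: I need to ensure that the stopping times can be chosen consistently at each dyadic level, which is exactly where the right-continuity hypothesis $v_i \in V^2_{rc}$ is used (otherwise the iterative argument can fail to terminate or to converge back to $v_i$). A secondary subtlety is organizing the multilinear expansion so that the logarithmic loss appears only once, rather than being paid in each variable separately; this is achieved by optimizing the single threshold $K$ globally over the $m$-fold dyadic index $(k_1, \ldots, k_m)$ and exploiting that the $\min$ of the two bounds is dominated by the geometric-tail contribution of the $U^{q_i}$ estimate at the threshold.
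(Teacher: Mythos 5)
The paper itself gives no argument for this proposition: it is quoted verbatim as Proposition 2.20 of \cite{hadac2009well}, whose proof is exactly the route you take — a greedy stopping-time decomposition of each $V^2_{rc}$ input into dyadic levels (this is HHK's Proposition 2.5, where right-continuity and vanishing at $-\infty$ are used), telescoping into pieces that are bounded in $U^2_\Delta$ and decay like $2^{-k\epsilon_i}$ in $U^{q_i}_\Delta$, multilinear expansion, and a split at a threshold $K\asymp \log(C/C_2)$. Up to that point your construction and the norm bookkeeping $\|\Delta^{(i)}_k\|_{U^p_\Delta}\lesssim (N^{(i)}_k)^{1/p}2^{-k}\lesssim 2^{k(2/p-1)}$ are correct, and the expansion converges because $V^2_{rc}\hookrightarrow U^{q_i}$ and $T$ is bounded there.

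The gap is in the last step. Summing $\min\{C_2,\ C\prod_i 2^{-k_i\epsilon_i}\}$ over $(k_1,\dots,k_m)$ and splitting at $\sum_i k_i\epsilon_i = K$ does \emph{not} give a single logarithm: the sub-threshold region $\{\sum_i k_i\epsilon_i\le K\}$ already contains on the order of $K^m$ lattice points, each contributing $C_2$, so that part alone is $\sim C_2\,K^m$, and the tail contributes $\sim C\,2^{-K}K^{m-1}\sim C_2 K^{m-1}$. No choice of a single global threshold changes this count, since the loss comes from the cardinality of the low region, not from where you switch estimates. Hence your argument proves $\|T(v_1,\dots,v_m)\|_E\lesssim C_2\bigl(\log\tfrac{C}{C_2}+1\bigr)^m\prod_i\|v_i\|_{V^2_\Delta}$, which is in fact the statement actually proved in \cite{hadac2009well} (the version quoted in this paper, with the logarithm to the first power, appears to be a misstatement; for $m\ge 2$ the two differ, and the single-log form is not obtained by this method). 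This weaker, correct form is all that is used later in the paper — in Proposition \ref{Interpolation2} and in Step 4 of Cases B(a), B(b) the logarithmic factors are absorbed into $\delta^c$ and $N^\epsilon$ losses — so you should either prove and invoke the $(\log(C/C_2)+1)^m$ version, or supply a genuinely different argument if you insist on the literal single-log claim; the proposal as written does not do the latter.
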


To make the proposition \ref{Interpolation1} suitable for the following nonlinear estimates, we also need to introduce a similar interpolation proposition for the integral of $T$ over a time interval $I$ as following:

\begin{prop}\label{Interpolation2}
Let $q_1, ... , q_m > 2$ ($m \in \mathbb{N}$),  and $T: U_{\Delta}^{q_1} \times \cdots \times U_{\Delta}^{q_m} \to L^1_{x,t, loc}(I\times \TTT^d)$ be a m-linear operator with
\begin{equation}
\left|\int_{I}\int_{\TTT^d} T (u_1,\cdots, u_m)\, dxdt\right| \leq C \prod_{i=1}^m \|u_i\|_{U_{\Delta}^{q_i}}.
\end{equation} 
And also assume there exists $0<C_2<C$ such that we hold,
\begin{equation}
\left|\int_{I}\int_{\TTT^d} T (u_1,\cdots, u_m)\, dxdt\right| \leq C_2 \prod_{i=1}^m \|u_i\|_{U_{\Delta}^{2}}.
\end{equation}
Then, $T$ satisfies the estimate
\begin{equation}
\left|\int_{I}\int_{\TTT^d} T (u_1,\cdots, u_m)\, dxdt\right|\leq C_2(\log\frac{C}{C_2}+1) \prod_{i=1}^m \|u_i\|_{V_{\Delta}^{2}}, \quad u_i\in V^2_{rc,\Delta}, \ i=1,...,m.
\end{equation}
\end{prop}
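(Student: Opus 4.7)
The plan is to derive Proposition \ref{Interpolation2} as an immediate corollary of Proposition \ref{Interpolation1} by viewing the scalar quantity $\int_I \int_{\TTT^d} T(\cdot)\, dx\, dt$ as a bounded $m$-linear functional valued in the Banach space $E = \mathbb{C}$. The interpolation machinery of \cite{hadac2009well} is formulated for arbitrary Banach-space target, and all that is needed are the two quantitative bounds with constants $C$ and $C_2$; the logarithmic balancing argument built into the $V^2 \hookrightarrow U^p + U^2$ decomposition is insensitive to the nature of $E$.

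Concretely, I would introduce
\[
\Lambda(u_1,\ldots,u_m) := \int_I \int_{\TTT^d} T(u_1,\ldots,u_m)(x,t)\, dx\, dt,
\]
which is multilinear and well-defined as a scalar since $T(u_1,\ldots,u_m) \in L^1_{x,t,loc}(I \times \TTT^d)$ and $I$ is bounded. The two hypotheses of Proposition \ref{Interpolation2} translate word-for-word into the hypotheses of Proposition \ref{Interpolation1} applied to $\Lambda$ with $E = \mathbb{C}$:
\[
|\Lambda(u_1,\ldots,u_m)| \leq C \prod_{i=1}^m \|u_i\|_{U^{q_i}_{\Delta}} \qquad \text{and} \qquad |\Lambda(u_1,\ldots,u_m)| \leq C_2 \prod_{i=1}^m \|u_i\|_{U^{2}_{\Delta}}.
\]
An application of Proposition \ref{Interpolation1} then yields
\[
|\Lambda(u_1,\ldots,u_m)| \leq C_2 \bigl(\log \tfrac{C}{C_2} + 1\bigr) \prod_{i=1}^m \|u_i\|_{V^{2}_{\Delta}}
\]
for $u_i \in V^2_{rc,\Delta}$, which is precisely the desired conclusion.

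I do not anticipate a substantive obstacle here, because this is pure repackaging: the genuine interpolation content has already been absorbed into Proposition \ref{Interpolation1}. The only detail worth confirming is that the finite interval $I$ causes no integrability issue in defining $\Lambda$, which is immediate from the hypothesis that $T$ takes values in $L^1_{x,t,loc}$ together with boundedness of $I$. If one preferred a self-contained proof, one would instead mimic the argument of \cite[Proposition~2.20]{hadac2009well} directly: apply the $V^2 \to U^p + U^2$ decomposition lemma (\cite[Proposition~2.17]{hadac2009well}) to each input $u_i$ at a common truncation parameter $M$, distribute multilinearity across the $2^m$ resulting cross-terms, bound any term containing a $U^p$-part by the $C$-estimate and the remaining all-$U^2$ term by the $C_2$-estimate, then sum dyadically in $M$ and optimize, producing the $\log(C/C_2)+1$ factor.
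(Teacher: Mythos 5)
Your proposal is correct and matches the paper's own argument in substance: the paper simply observes that $\left|\int_{I}\int_{\TTT^d} T(u_1,\cdots,u_m)\,dx\,dt\right|$ behaves exactly like the norm $\|T(u_1,\cdots,u_m)\|_{E}$ in Proposition \ref{Interpolation1}, so the proof of Proposition 2.20 in \cite{hadac2009well} goes through unchanged, which is precisely your reduction to the case $E=\mathbb{C}$ via the scalar functional $\Lambda$. No gap here; your fallback of redoing the $V^2$-to-$U^p+U^2$ decomposition directly is also what the cited proof does.
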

\begin{proof}
The proof is almost the same as that of Proposition 2.20 in \cite{hadac2009well}, since $\left|\int_{I}\int_{\TTT^d} T (u_1, \cdots, u_m)\, dxdt\right|$ is m-sublinear for $u_1, \cdots, u_m$ as $\|T(u_1, \cdots , u_m)\|_{E}$ in Prop \ref{Interpolation1}.
\end{proof}

\begin{definition}[Duhamel operator]\label{DuhamelOperator}
Let $f \in L_{loc}^1 ([0,\infty), L^2(\mathbb{T}^4))$, and we define the Duhamel operator $\mathcal{I}$
\begin{equation}
\mathcal{I}(f)(t): = \int_0^t e^{i(t-t')\Delta} f(t') dt',
\end{equation}
for $t>0$  and $\mathcal{I}(f)(t) := 0$ otherwise.
\end{definition}

\begin{prop}[Proposition 2.11 in \cite{herr2011global}]\label{DualProp}
Let $s>0$, and a time interval $I = [0, \delta]$. For $f\in L^1(I, H^s(\mathbb{T}^4))$ we have $\mathcal{I}(f) \in X^s(I)$ and 
\begin{equation}
\| \mathcal{I}(f) \|_{X^s(I)}\leq \sup_{\|v\|_{Y^{-s}(I)}=1}\left|\int_0^\delta
\int_{\mathbb{T}^d} f(t, x)\overline{v(t,x)}dxdt\right|.
\end{equation}
\end{prop}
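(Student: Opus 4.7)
\textbf{Proof plan for Proposition \ref{DualProp}.}

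My plan is to reduce the statement to the well-known $U^2$--$V^2$ duality, applied one Fourier mode at a time, and then reassemble the modes using Cauchy--Schwarz. Concretely, fix an extension $\widetilde{f}$ of $f$ to $\mathbb{R}$ by zero outside $I = [0,\delta]$, and set $u := \mathcal{I}(\widetilde{f})$. By the definition of $X^s$,
\begin{equation*}
\|u\|_{X^s(I)}^2 \leq \sum_{n\in\mathbb{Z}^d} \langle n\rangle^{2s} \, \|F_n\|_{U^2_t}^2, \qquad F_n(t) := e^{it|n|^2}\widehat{u(t)}(n) = \int_0^t e^{is|n|^2}\widehat{f(s)}(n)\,ds,
\end{equation*}
so the goal is to control $\|F_n\|_{U^2_t}$ by a pairing against some dual object, and then sum.

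First, I would invoke the duality between $U^2$ and $V^2$ (in the form used in Hadac--Herr--Koch \cite{hadac2009well}): for any absolutely continuous $F_n$ with $F_n(0)=0$, there is a bilinear form $B(F_n, h_n)$ satisfying $\|F_n\|_{U^2_t} = \sup_{\|h_n\|_{V^2_t}\leq 1} |B(F_n, h_n)|$, and when $F_n$ has a derivative $a_n(s) = e^{is|n|^2}\widehat{f(s)}(n)$, integration by parts gives $B(F_n, h_n) = \int_0^{\delta} a_n(s)\,\overline{h_n(s)}\,ds$ (with the boundary terms vanishing because $F_n(0)=0$ and because we may restrict attention to $h_n$ supported in $[0,\delta]$). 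Thus for every $n$ one can pick $h_n \in V^2_t$ with $\|h_n\|_{V^2_t}\le 1$ such that $\|F_n\|_{U^2_t}$ is approximately $\int_0^{\delta} a_n(s)\,\overline{h_n(s)}\,ds$.

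Next I would reassemble these mode-by-mode bounds into a single dual function $v$. Given any unit vector $(c_n) \in \ell^2$, I define the candidate
\begin{equation*}
\widehat{v(t)}(n) := \overline{c_n}\,\langle n\rangle^{s}\, e^{-it|n|^2}\,h_n(t),
\end{equation*}
so that $e^{it|n|^2}\widehat{v(t)}(n) = \overline{c_n}\,\langle n\rangle^{s}\,h_n(t)$. Then
\begin{equation*}
\|v\|_{Y^{-s}(I)}^2 = \sum_{n}\langle n\rangle^{-2s}\langle n\rangle^{2s}|c_n|^2\|h_n\|_{V^2_t}^2 \leq \sum_n |c_n|^2 = 1,
\end{equation*}
and by Plancherel in $x$
\begin{equation*}
\int_0^{\delta}\!\!\int_{\mathbb{T}^d} f(t,x)\,\overline{v(t,x)}\,dx\,dt = \sum_n \int_0^{\delta} \widehat{f(t)}(n)\,\overline{\widehat{v(t)}(n)}\,dt = \sum_n c_n\,\langle n\rangle^{s}\!\int_0^{\delta} a_n(t)\,\overline{h_n(t)}\,dt.
\end{equation*}
Choosing $c_n := \langle n\rangle^{s}\|F_n\|_{U^2_t}/\|u\|_{X^s(I)}$ (so $\sum|c_n|^2 \le 1$ by the definition of $\|u\|_{X^s(I)}$) and using the duality identification of $\int a_n\overline{h_n}\,dt$ with $\|F_n\|_{U^2_t}$ (up to an arbitrarily small loss) gives
\begin{equation*}
\|u\|_{X^s(I)} \leq \sum_n \langle n\rangle^{2s} \|F_n\|_{U^2_t}^2 / \|u\|_{X^s(I)} \leq \sup_{\|v\|_{Y^{-s}(I)}=1}\Bigl|\int_0^{\delta}\!\!\int_{\mathbb{T}^d} f\,\overline{v}\,dx\,dt\Bigr|,
\end{equation*}
which is the required bound after passing to the infimum over extensions in the restriction norms.

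The main obstacle will be the careful handling of the $U^2$--$V^2$ duality: the pairing $B$ is a Stieltjes-type form, not literally an $L^2$ inner product, and one must check that for indefinite integrals starting at $0$ and for $h_n$ restricted to $[0,\delta]$, the boundary terms genuinely vanish and $B(F_n,h_n) = \int a_n\overline{h_n}\,dt$. All the other pieces (Plancherel, Cauchy--Schwarz in $n$, and the compatibility with restriction norms) are essentially bookkeeping once this duality identity is in hand.
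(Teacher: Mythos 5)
The paper itself offers no proof of this statement: it is quoted verbatim from Proposition 2.11 of \cite{herr2011global}, and the proof there is exactly the route you take — mode-by-mode $U^2$--$V^2$ duality applied to $F_n(t)=\int_0^t e^{is|n|^2}\widehat{f(s)}(n)\,ds$, then Plancherel in $x$ and $\ell^2$-duality in $n$ to assemble the dual function $v$, with the pairing identity $B(F_n,h_n)=\pm\int a_n\overline{h_n}\,dt$ for absolutely continuous $F_n$ supplied by Proposition 2.10 of \cite{hadac2009well}. Your argument is correct in substance; the one slip is the normalization of $c_n$: with $c_n=\langle n\rangle^{s}\|F_n\|_{U^2_t}/\|u\|_{X^s(I)}$ you get $\sum_n|c_n|^2=\bigl(\sum_n\langle n\rangle^{2s}\|F_n\|_{U^2_t}^2\bigr)/\|u\|_{X^s(I)}^2\geq 1$ rather than $\leq 1$, because the restriction norm is an infimum and hence sits on the wrong side of the inequality you invoke. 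The fix is one line: normalize by the full-line quantity $\bigl(\sum_n\langle n\rangle^{2s}\|F_n\|_{U^2_t}^2\bigr)^{1/2}$ instead (equivalently, divide the pairing by $\|v\|_{Y^{-s}(I)}$ at the end; the homogeneity makes the factors cancel). This gives $\bigl(\sum_n\langle n\rangle^{2s}\|F_n\|_{U^2_t}^2\bigr)^{1/2}\leq\sup_{\|v\|_{Y^{-s}(I)}=1}\bigl|\int_0^\delta\int_{\mathbb{T}^d}f\,\overline{v}\,dx\,dt\bigr|$, and the claimed bound follows since $\|\mathcal{I}(f)\|_{X^s(I)}$ is dominated by that square-sum; choosing the $h_n$ right-continuous keeps each mode of $v$ in $V^2_{rc}$, as required for the $Y^{-s}$ norm.
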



\section{Auxiliary lemmata and notations}

\begin{definition}[Littlewood-Paley decomposition]
For $N>1$ a dyadic number, we denote by $P_{\leq N}$ the rectangular Fourier projection operator:
$$
P_{\leq N}f = \sum_{n\in\mathbb{Z}^4: |n_i|\leq N} \hat{f}(n)e^{in\cdot x}.
$$
Then $P_N = P_{\leq N}-P_{\leq N-1}$. Moreover, if $C$ is a subset of $\mathbb{Z}^d$, then the Fourier projection operator onto $C$ is defined by $P_C$
$$
P_{C}f = \sum_{n\in\mathbb{Z}^d: n\in C} \hat{f}(n)e^{in\cdot x}.
$$
\end{definition}

In the Bourgain's GAFA paper\cite{bourgain1993fourier}, he firstly introduced the following Strichartz estimate of Schr\"odinger operator on tori as a conjucture, and proved parts of the conjucture. And then Bourgain-Demeter\cite{bourgain2015proof} proved the following Strichartz estimate. 

\begin{prop}[Strichartz estimate\cite{bourgain1993fourier}\cite{bourgain2015proof}]\label{Strichartz}
Let $p>p_c$, where $p_c = \frac{2(d+2)}{d}$. For all $N\geq 1$ we have
\begin{eqnarray}
\|P_{N} e^{it\Delta} \phi \|_{L^p_{x,t}(\mathbb{T}\times\mathbb{T}^d)}&\lesssim& N^{\frac{d}{2}-\frac{d+2}{p}}\|P_{N}\phi\|_{L^2_x(\mathbb{T}^d)},\label{eq:Strichartz0}\\
\|P_{C} e^{it\Delta} \phi \|_{L^p_{x,t}(\mathbb{T}\times\mathbb{T}^d)}&\lesssim& N^{\frac{d}{2}-\frac{d+2}{p}}\|P_{C}\phi\|_{L^2_x(\mathbb{T}^d)},\label{eq:Strichartz1}\\
\|P_{N} u \|_{L^p_{x,t}(\mathbb{T}\times\mathbb{T}^d)}&\lesssim& N^{\frac{d}{2}-\frac{d+2}{p}}\|P_{C} u \|_{U_\Delta^{p}L^2}\label{eq:Strichartz3},\\
\|P_{C} u \|_{L^p_{x,t}(\mathbb{T}\times\mathbb{T}^d)}&\lesssim& N^{\frac{d}{2}-\frac{d+2}{p}}\|P_{C} u \|_{U_\Delta^{p}L^2}\label{eq:Strichartz2},
\end{eqnarray}
where $C$ is a cube in $\mathbb{Z}^d$ with sides parallel to the axis of side length $N$.
\end{prop}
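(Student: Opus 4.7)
The plan is to derive (\ref{eq:Strichartz0})--(\ref{eq:Strichartz1}) from the $\ell^2$-decoupling inequality of Bourgain--Demeter for the paraboloid, and then pass to the $U^p$-versions (\ref{eq:Strichartz3})--(\ref{eq:Strichartz2}) via the transfer principle (Proposition~\ref{TransferPrinciple}).

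For (\ref{eq:Strichartz0}) and (\ref{eq:Strichartz1}), the first step is to fix $B$ equal either to the dyadic annulus $\{n\in \ZZZ^d : |n|\sim N\}$ or to a cube $C\subset \ZZZ^d$ of side $N$, and to start from the explicit Fourier expansion
\[
P_B e^{it\Delta}\phi(x) = \sum_{n\in B} \widehat{\phi}(n)\, e^{i(n\cdot x + |n|^2 t)}.
\]
The space-time frequencies $(n,|n|^2)$ are lattice points lying on a truncated paraboloid of ``radius'' $N$. After the parabolic rescaling $x\mapsto N x$, $t\mapsto N^2 t$, the problem reduces to bounding a sum of $\sim N^d$ exponentials whose frequencies are $1/N$-separated points on the unit paraboloid. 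The Bourgain--Demeter $\ell^2$-decoupling theorem then controls its $L^p$ norm over the rescaled torus by $N^{\varepsilon}\bigl(\sum_{n\in B}|\widehat{\phi}(n)|^2\bigr)^{1/2}$ for every $p\ge p_c=\tfrac{2(d+2)}{d}$. Since we only need the estimate in the strict range $p>p_c$, the $N^\varepsilon$ loss can be absorbed by interpolating with the trivial Bernstein-type bound $\|\cdot\|_{L^\infty}\lesssim N^{d/2}\|(\widehat{\phi}(n))\|_{\ell^2}$. Undoing the rescaling then produces the scaling factor $N^{\frac{d}{2}-\frac{d+2}{p}}$ and yields (\ref{eq:Strichartz0}) and (\ref{eq:Strichartz1}).

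For the $U^p$-versions (\ref{eq:Strichartz3}) and (\ref{eq:Strichartz2}), the plan is to apply Proposition~\ref{TransferPrinciple} with $m=1$ and $T_0(\phi) := P_B \phi$. Because the Fourier projector $P_B$ commutes with the free Schr\"odinger flow, the hypothesis of the transfer principle is exactly the free-solution bound (\ref{eq:Strichartz0}) or (\ref{eq:Strichartz1}) just established, and its conclusion is the corresponding estimate with $\|P_B\phi\|_{L^2}$ replaced by $\|P_B u\|_{U^p_\Delta L^2}$.

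The entire substance of the argument is in the decoupling input, which would be invoked as a black box; the parabolic rescaling is routine bookkeeping, and the passage from free-solution estimates to $U^p$-estimates is a direct application of the machinery already developed in Section~3. The only subtle point is the removal of the $N^\varepsilon$ loss that is unavoidable in decoupling at the critical exponent $p = p_c$; this is precisely why the proposition is formulated for the strict inequality $p > p_c$.
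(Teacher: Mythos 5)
Your treatment of \eqref{eq:Strichartz3}--\eqref{eq:Strichartz2} coincides with the paper's: the paper gives no proof of \eqref{eq:Strichartz0}--\eqref{eq:Strichartz1} (they are quoted from Bourgain and Bourgain--Demeter), and it obtains the $U^p$ versions exactly as you propose, by applying Proposition \ref{TransferPrinciple} to the Fourier projection, which commutes with $e^{it\Delta}$. So the only place you go beyond the paper is your sketch of the linear estimates via $\ell^2$-decoupling and parabolic rescaling, which is indeed the argument underlying the cited results.

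In that sketch, however, the $\varepsilon$-removal step is not correct as stated. Decoupling gives, for $p\ge p_c$, the bound $N^{\frac{d}{2}-\frac{d+2}{p}+\varepsilon}\|\phi\|_{L^2}$, and interpolating the critical-exponent estimate against the Bernstein bound $\|P_N e^{it\Delta}\phi\|_{L^\infty}\lesssim N^{d/2}\|\phi\|_{L^2}$ does not absorb the loss: choosing $\theta$ with $\frac{1}{p}=\frac{1-\theta}{p_c}$ yields the exponent $\frac{d}{2}-\frac{d+2}{p}+\frac{p_c}{p}\varepsilon$, so an $N^{c\varepsilon}$ factor (with constant blowing up as $\varepsilon\to 0$) survives for every fixed $p>p_c$, which is strictly weaker than the stated scale-invariant bound. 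The genuinely $\varepsilon$-free estimate for $p>p_c$ is the scale-invariant Strichartz estimate of Killip--Vi\c{s}an, whose deduction from Bourgain--Demeter requires more than interpolation (a level-set/dyadic cap decomposition that uses the room $p>p_c$ to sum). So either invoke that result as a black box (which is effectively what the paper does by citation), or keep the $N^{\varepsilon}$ loss --- harmless in the Section 5 estimates, where there is always polynomial slack in $N$, but it would spoil the clean embedding $\|v\|_{Z^s(I)}\lesssim \|v\|_{X^s(I)}$ of Proposition \ref{prop:ZinX}, which is used in the critical case $s=s_c$. A minor point you inherited from the statement: in \eqref{eq:Strichartz3} the right-hand side should read $\|P_N u\|_{U^p_\Delta L^2}$.
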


Note that the last inequality (\ref{eq:Strichartz3})(\ref{eq:Strichartz2}) follows (\ref{eq:Strichartz0})(\ref{eq:Strichartz1}) and Proposition \ref{TransferPrinciple}.

\begin{lem}[Integer lattice counting estimates \cite{bourgain1993fourier}]\label{lem:BourgainCounting}
Denote the number of set $\{(X_1,\cdots, X_d)\in \mathbb{Z}^d: X_1^2+\cdots+X_d^2 = A\}$ by $C_{d,A}$. Then $C_{d,A}$ can be bounded by
$$
\begin{aligned}
&A^\epsilon &(d=2)\\
&A^{\frac{1}{2}+\epsilon}&(d=3)\\
&A^{1+\epsilon}&(d=4)\\
&A^{\frac{d-2}{2}}&(d>4)
\end{aligned}.
$$
where $\epsilon$ is an arbitrary small positive number.
\end{lem}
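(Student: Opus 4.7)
The plan is to induct on the dimension $d$, starting from the base case $d=2$ where the estimate is a classical divisor bound, and then peel off coordinates one at a time.

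For $d=2$, the bound $C_{2,A} \lesssim A^{\epsilon}$ is an immediate consequence of the classical divisor bound $\tau(n) \ll n^{\epsilon}$ applied in the Gaussian integers. Indeed, each representation $A = X_1^2 + X_2^2$ gives a factorization $A = (X_1 + iX_2)(X_1 - iX_2)$ in $\mathbb{Z}[i]$, so $X_1 + iX_2$ must be a divisor of $A$ in this UFD; since there are only $O(A^{\epsilon})$ such divisors (up to units, of which there are four), we conclude $C_{2,A} \lesssim A^{\epsilon}$.

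For $d = 3$ and $d = 4$, I would proceed by induction: fix one coordinate $X_d$ with $|X_d| \leq \sqrt{A}$ (which gives $O(A^{1/2})$ choices), and apply the $(d-1)$-dimensional bound to $X_1^2 + \cdots + X_{d-1}^2 = A - X_d^2 \leq A$. Iterating once from $d=2$ gives $C_{3,A} \lesssim A^{1/2} \cdot A^{\epsilon} = A^{1/2+\epsilon}$, and once more gives $C_{4,A} \lesssim A^{1/2} \cdot A^{1/2 + \epsilon} = A^{1 + \epsilon}$, matching the claimed bounds.

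For $d > 4$, the naive continuation of the induction produces $A^{(d-2)/2 + \epsilon}$, which has one unwanted $A^{\epsilon}$. To obtain the clean bound $A^{(d-2)/2}$ stated in the lemma, I would switch strategies and bound the lattice point count on the sphere $\{X \in \mathbb{Z}^d : |X|^2 = A\}$ by a volume/surface-area argument: the sphere of radius $\sqrt{A}$ in $\mathbb{R}^d$ has surface area $\lesssim A^{(d-1)/2}$, and partitioning it into a thin shell of width $O(A^{-1/2})$ (so that each lattice point falls in a unique box of side $O(1)$) gives the count $\lesssim A^{(d-2)/2}$. This surface-area approach works precisely when $d - 2 \geq d/2 - 1$ leaves enough margin for the shell decomposition to be sharp, i.e., for $d \geq 5$.

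The main obstacle is really the transition at $d = 3$, which forces us through the number-theoretic input of the Gaussian divisor bound rather than pure volume considerations (since the sphere in $\mathbb{R}^3$ of radius $\sqrt{A}$ has surface area $\sim A$, and a naive packing would only give $O(A)$, much worse than $A^{1/2+\epsilon}$). For $d \geq 5$ the surface-area heuristic becomes tight and the clean polynomial bound is available without any $A^\epsilon$ loss.
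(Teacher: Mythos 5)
Your $d=2,3,4$ cases are fine and follow the standard route: the Gaussian-integer divisor bound for $d=2$, then peeling off one coordinate to get $A^{1/2+\epsilon}$ and $A^{1+\epsilon}$. (For the record, the paper gives no proof of this lemma at all — it is quoted from Bourgain's GAFA paper — so these steps can only be compared with the classical arguments, with which they agree.)

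The genuine gap is your $d>4$ case. A volume/packing argument cannot yield $A^{(d-2)/2}$. The points you are counting lie exactly on the sphere of radius $\sqrt A$; the only disjoint boxes you can attach to distinct lattice points have side length comparable to $1$ (that is all that lattice-point separation gives), and such unit boxes are \emph{not} contained in your shell of width $O(A^{-1/2})$ — they are only contained in a shell of width $O(1)$, whose volume is of order $A^{(d-1)/2}$. So the packing bound you can actually justify is the trivial surface bound $\lesssim A^{(d-1)/2}$, which is larger by a factor $A^{1/2}$ than the claimed $A^{(d-2)/2}$; dividing the volume of the thin shell by the volume of boxes that do not fit inside it is not a valid count. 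Your closing criterion ``$d-2\geq d/2-1$, i.e.\ $d\geq 5$'' also does not parse: that inequality holds for every $d\geq 2$ and in any case plays no role in the packing computation. The clean bound for $d\geq 5$ genuinely needs arithmetic input: for instance, continue the induction from Jacobi's formula $r_4(n)\lesssim\sigma(n)$ and estimate $\sum_{|x|\leq\sqrt A}\sigma(A-x^2)$ by counting square roots of $A$ modulo each divisor, or invoke the circle method / theta-series decomposition, which for $d\geq 5$ gives $r_d(A)$ comparable to $A^{(d-2)/2}$ times a bounded singular series. Alternatively, note that your own induction already gives $A^{(d-2)/2+\epsilon}$ for all $d\geq 5$, and this $\epsilon$-lossy version is all the paper ever uses (Lemma \ref{Counting1} and the counting steps in Section 5 already carry $\epsilon$ losses), so the honest fix is either to cite the sharper result or to state and use the $\epsilon$-version.
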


By Lemma \ref{lem:BourgainCounting}, it's easy to obtain the following lattice counting lemmas.
\begin{lem}\label{Counting1}
Let $S_R$ be a sphere of radius $R$, $B_r$ be a ball of radius $r$, and $\mathcal{P}$ be a plane in $\RRR^d$ for $d\geq 3$. Then
\begin{align}
&|\ZZZ^d \cap S_R|\leq R^{d-2+\epsilon},\\
&|\ZZZ^d \cap B_r \cap \mathcal{P}|\leq r^{d-1},
\end{align}
where $|\cdot|$ denotes cardinality and $\epsilon$ is an arbitrary small positive number.
\end{lem}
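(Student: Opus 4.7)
The plan is to derive both bounds directly from the stated Bourgain counting lemma (Lemma \ref{lem:BourgainCounting}) together with elementary lattice/geometric reasoning. The two estimates are of different flavor: the first is arithmetic and reduces instantly to the Bourgain divisor-style count, while the second is essentially a volume estimate backed up by a uniform density bound on sublattices of $\ZZZ^d$.

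For the sphere estimate, I would first reduce to the case when $S_R$ is centered at a lattice point by translation, so that counting amounts to $|\{n \in \ZZZ^d : |n|^2 = R^2\}| = C_{d, R^2}$ in the notation of Lemma \ref{lem:BourgainCounting}. Then I would split into dimensions: for $d = 3$, the lemma yields $R^{1 + 2\epsilon}$; for $d = 4$, it gives $R^{2 + 2\epsilon}$; and for $d \geq 5$, it gives $R^{d-2}$. In each case the bound is $\lesssim R^{d-2+\epsilon}$ after relabeling the small exponent, so I expect this step to be essentially immediate once the translation reduction is in place.

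For the plane-ball estimate, I would view $\mathcal{P}$ as a $(d-1)$-dimensional affine hyperplane. If $\mathcal{P}\cap\ZZZ^d = \emptyset$ the conclusion is trivial, so otherwise I would fix $n_0 \in \mathcal{P}\cap\ZZZ^d$, translate by $-n_0$, and set $V := \mathcal{P} - n_0$, a linear hyperplane through the origin. Then $\Lambda := V \cap \ZZZ^d$ is a sublattice of $\ZZZ^d$ of rank at most $d-1$ lying inside $V$. The key observation is that the $(d-1)$-dimensional covolume of $\Lambda$ inside $V$ is always at least one: if $V$ is rational with primitive integer normal $\nu$, then this covolume equals $|\nu|\geq 1$, while if $V$ is irrational then $\Lambda$ has rank strictly less than $d-1$ and an even sharper bound holds. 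Consequently $\Lambda$ has density bounded by an absolute constant in $V$, and since $B_r \cap \mathcal{P}$ sits inside a $(d-1)$-dimensional ball of radius $\leq r$ inside $n_0 + V$, a standard volume comparison against a fundamental domain of $\Lambda$ (enlarging the ball by $O(1)$ to cover boundary cells) yields $|\ZZZ^d \cap B_r \cap \mathcal{P}| \lesssim r^{d-1}$.

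The main conceptual point is the uniform-density claim in Part 2, which is what prevents the count from blowing up when $V$ happens to have an almost rational orientation with very small integer normal. Part 1 is a direct unpacking of the stated counting lemma, so I expect Part 2 to be the only step requiring any thought, and even there the obstacle is mild because it ultimately reduces to the standard fact that primitive rank-$(d-1)$ sublattices of $\ZZZ^d$ cannot be arbitrarily dense in the hyperplane they span.
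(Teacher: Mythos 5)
The paper gives no written proof of this lemma (it is recorded as an easy consequence of Lemma \ref{lem:BourgainCounting}), and your route—deduce the sphere bound from $C_{d,A}$ and the plane bound from a lattice-density/packing argument—is exactly the intended one, so in substance you match the paper. Two remarks are worth making. First, your opening reduction ``translate so that $S_R$ is centered at a lattice point'' is not a legitimate reduction for an arbitrary sphere: a translation taking the center to a lattice point does not preserve $\ZZZ^d$ unless the center already lies in $\ZZZ^d$, so as written your Part 1 proves the lemma only for lattice-centered spheres. This is harmless for the paper's applications, where the spheres that actually arise have centers in $\tfrac12\ZZZ^d$ (e.g.\ $(n_2+n_3)/2$) and radius squared in $\tfrac14\ZZZ$, so one doubles ($2n_1$ lies on a sphere of radius $2R$ about the lattice point $n_2+n_3$) and applies $C_{d,4R^2}$; but for a genuinely arbitrary center an extra argument is needed (for instance, observing that the lattice points on the sphere impose integer linear constraints on the center, which either forces a rational center—handled by scaling—or confines the lattice points to a lower-dimensional affine slice).

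Second, for the plane bound your covolume discussion, while correct, is heavier than necessary and your worry about hyperplanes with small integer normal is a red herring: the points of $\ZZZ^d\cap B_r\cap\mathcal{P}$ are $1$-separated and lie in a $(d-1)$-dimensional ball of radius $r$ inside $\mathcal{P}$, so the disjoint $(d-1)$-dimensional balls of radius $1/2$ centered at them are contained in the concentric ball of radius $r+1/2$, and comparing $(d-1)$-dimensional volumes gives at most $C_d\,(2r+1)^{d-1}$ points with no case distinction between rational and irrational hyperplanes. Note also that both inequalities in the lemma must be read with an implicit constant ($\lesssim$), which is what your argument (and the packing argument) actually produces.
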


\begin{lem}\label{Counting2}
Consider the set
\[
S = \{ (n_1,n_2,n_3)\in \ZZZ^d\times\ZZZ^d\times\ZZZ^d : n_2\neq n_1, n_3, |n_i|\sim N_i \text{ for } i =1,2,3, \text{ and } \langle n_2-n_1, n_2-n_3 \rangle = \mu\}.
\]
For a fixed $n_2$, $|S(n_2)| \lesssim N_1^{d-1} N_3^{d-1} \min \{N_1, N_3\}^\epsilon$, where $|\cdot|$ denotes cardinality and $\epsilon$ is an arbitrary small positive number.
\end{lem}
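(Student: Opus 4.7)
The plan is to recast the problem as a bilinear Diophantine counting problem. Setting $a := n_2 - n_1$ and $b := n_2 - n_3$ (so $a,b \in \mathbb{Z}^d$), the constraint becomes $a \cdot b = \mu$, and the size conditions $|n_i| \sim N_i$ give $|a|_\infty \lesssim \max(N_1, N_2)$ and $|b|_\infty \lesssim \max(N_3, N_2)$, which in the standard dyadic regime we may take as $|a|_\infty \lesssim N_1$ and $|b|_\infty \lesssim N_3$. The exclusions $n_1 \neq n_2$, $n_2 \neq n_3$, $n_1 \neq n_3$ translate to $a \neq 0$, $b \neq 0$, and $a \neq b$, respectively, so we are reduced to counting such pairs $(a,b)$.

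Next I would single out one coordinate, say the $d$-th. Enumerate $(a_1, \dots, a_{d-1})$ and $(b_1, \dots, b_{d-1})$ in boxes of side $\lesssim N_1$ and $\lesssim N_3$ respectively, giving at most $\lesssim N_1^{d-1} N_3^{d-1}$ configurations. The remaining constraint then collapses to the scalar product equation
\[
a_d\, b_d \;=\; c, \qquad c := \mu - \sum_{i=1}^{d-1} a_i b_i \in \mathbb{Z}.
\]
For $c \neq 0$, the pair $(a_d, b_d)$ is a divisor pair of $c$, so by the classical divisor bound $\tau(|c|) \lesssim_\epsilon |c|^\epsilon$ the number of such pairs is $\lesssim |c|^\epsilon$. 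Assuming without loss of generality that $N_1 \leq N_3$, the constraint $|a_d| \lesssim N_1$ restricts us to divisors of $c$ of size at most $\min(N_1, N_3)$, yielding a factor of $\min\{N_1, N_3\}^\epsilon$ (after enlarging $\epsilon$ if needed).

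For the degenerate case $c = 0$, necessarily $a_d = 0$ or $b_d = 0$. In either case the residual identity $\sum_{i < d} a_i b_i = \mu$ is linear in the remaining coordinates once the others are fixed, so invoking the hyperplane-ball count of Lemma \ref{Counting1} together with a free choice of the removed coordinate bounds this contribution by $\lesssim N_1^{d-1} N_3^{d-1}$, which is absorbed into the main estimate. Combining the two cases yields the stated bound.

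The main obstacle, aside from bookkeeping, will be ensuring that the $\epsilon$ saving indeed lands on $\min\{N_1, N_3\}$: naively one obtains $(N_1 N_3)^\epsilon \sim \max\{N_1, N_3\}^{2\epsilon}$, and shaving this down to $\min\{N_1, N_3\}^\epsilon$ requires the refined divisor enumeration restricted to the smaller range together with a careful separation of the $c = 0$ exceptional stratum, where divisor bounds are unavailable and one must fall back on the linear-equation counting of Lemma \ref{Counting1}.
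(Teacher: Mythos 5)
Your reduction to counting pairs $(a,b)=(n_2-n_1,\,n_2-n_3)$ with $a\cdot b=\mu$, enumerating the first $d-1$ coordinates ($\lesssim N_1^{d-1}N_3^{d-1}$ choices, since each coordinate of $a$, resp.\ $b$, ranges over an interval of length $\sim N_1$, resp.\ $\sim N_3$) and then using multiplicativity in the last coordinate, is the standard route, and it does yield a correct bound of the form $N_1^{d-1}N_3^{d-1}(N_1N_3)^{\epsilon}$: for each slice the pairs $(a_d,b_d)$ with $a_db_d=c\neq 0$ number at most $d(|c|)\lesssim_\epsilon |c|^{\epsilon}$, and any counted pair has $|c|=|a_db_d|\lesssim \max(N_1,N_2)\max(N_2,N_3)$. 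However, the step by which you upgrade this to $\min\{N_1,N_3\}^{\epsilon}$ is a genuine gap: there is no uniform bound of the shape ``the number of divisors of $c$ lying in an interval of length $L$ is $\lesssim_\epsilon L^{\epsilon}$''. Concretely, take $c$ to be the product of the $\sim N_1^{1/2}/\log N_1$ primes in $(N_1^{1/2}, 2^{1/2}N_1^{1/2})$: every product of two of them lies in $(N_1,2N_1)$, so $c$ has $\gtrsim N_1/(\log N_1)^2$ divisors in a single interval of length $N_1$, vastly more than $N_1^{\epsilon}$. This is not an idle worry in the present setting, because $N_2$ and $N_3$ are not controlled by $N_1$: choosing $n_2=(N_1,0,\dots,0)$, $\mu=c$, $N_3\sim c/N_1$, and $a,b$ supported in the last coordinate, one realizes $\gtrsim N_1^{1-o(1)}$ admissible pairs $(a_d,b_d)$ in a single slice, so the structure ``$N_1^{d-1}N_3^{d-1}$ slices times $\min^{\epsilon}$ per slice'' cannot prove the stated lemma; the per-slice count can only be bounded by $|c|^{\epsilon}$, i.e.\ by a small power of the \emph{largest} parameter.

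Two further remarks. First, your preliminary assertion $|a|_\infty\lesssim N_1$, $|b|_\infty\lesssim N_3$ is not correct when $N_2\gg N_1,N_3$ (the vector $a$ lives in a box of \emph{side length} $\sim N_1$ centered at $n_2$, not in a ball of radius $N_1$); this is harmless for the $N_1^{d-1}N_3^{d-1}$ box count, but it removes even the starting point for your ``divisors of size at most $\min(N_1,N_3)$'' restriction. Second, the paper gives no proof of Lemma \ref{Counting2} (it is asserted to follow easily from Lemma \ref{lem:BourgainCounting}), so there is nothing to compare line by line; note, though, that where the lemma is invoked (Case \textbf{B(d)}) the bound actually used is $N_1^{d-1+\epsilon}N_2^{d-1}$ with $N_1\geq N_2$, i.e.\ the $\epsilon$ sits on the largest frequency, which is exactly what your divisor-bound argument (and the $c=0$ stratum handled via Lemma \ref{Counting1}, which you treat adequately) does deliver. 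So your write-up proves a version of the lemma sufficient for the paper's application, but not the statement as written; obtaining $\min\{N_1,N_3\}^{\epsilon}$ would require a different idea (for instance, exploiting cancellation or averaging of the divisor count of $c$ as the slice varies), and you should not claim it via the restricted divisor bound.
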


\begin{lem}[Bounds of Fourier coefficients of Characteristic function]\label{lem:boundFourier}
Consider $\mathds{1}_{[a,b]}(t)$ as a function in $L^2([0, 2\pi])$ where $a, b \in [0, 2\pi]$, then the Fourier coefficients $|\mathcal{F}{(\mathds{1}_{[a, b]})} (k)| \leq \frac{2}{|k|}$ for all $k\in \ZZZ$.
\end{lem}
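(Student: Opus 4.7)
The plan is to compute the Fourier coefficient of $\mathds{1}_{[a,b]}$ in closed form and then apply the trivial bound $|e^{i\theta}|=1$. For $k\neq 0$, writing the Fourier coefficient as
\[
\mathcal{F}(\mathds{1}_{[a,b]})(k) \;=\; \int_0^{2\pi}\mathds{1}_{[a,b]}(t)\,e^{-ikt}\,dt \;=\; \int_a^b e^{-ikt}\,dt \;=\; \frac{e^{-ika}-e^{-ikb}}{ik},
\]
the numerator is bounded in modulus by $|e^{-ika}|+|e^{-ikb}|=2$ by the triangle inequality, so that
\[
|\mathcal{F}(\mathds{1}_{[a,b]})(k)|\;\leq\;\frac{2}{|k|},
\]
which is exactly the claimed inequality. (Any overall normalization constant in the definition of $\mathcal{F}$ only improves the bound.)

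There is essentially no obstacle here: the entire content is the explicit antiderivative of $e^{-ikt}$ together with a one-line triangle-inequality estimate. The only caveat to mention is the case $k=0$, where the stated bound is vacuous/undefined; this is harmless since the $k=0$ coefficient equals $b-a$, which is never invoked in the way this lemma will be used later in the paper (where the bound is exploited to gain decay in $|k|$ when summing over nonzero frequencies of characteristic functions of subintervals).
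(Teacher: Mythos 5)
Your proof is correct and is essentially the same as the paper's: compute the Fourier coefficient explicitly as $\frac{e^{-ika}-e^{-ikb}}{ik}$ and bound the numerator by $2$ via the triangle inequality. The remark about $k=0$ is a reasonable (and harmless) clarification that the paper leaves implicit.
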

\begin{proof}
$|\mathcal{F}{(\mathds{1}_{[a, b]})} (k)| = |\frac{e^{ikb}- e^{ika}}{ik}| \leq \frac{2}{|k|}$.
\end{proof}

\begin{lem}[Lemma 6.3 in \cite{nahmod2015almost}]\label{lem:matrix}
Let $\mathcal{A} = (A_{ik})_{1\leq i \leq N\atop
1\leq k\leq M}$ be an $N\times M$ matrix. Then
\[
\|\mathcal{A}\mathcal{A}^*\|\leq \max_{1\leq j\leq N}\sum_{k=1}^M|A_{jk}|^2 +
\left(\sum_{i\neq j} \left|\sum_{k=1}^M A_{ik}\overline{A_{jk}}\right|^2\right)^{\frac{1}{2}}
\]
where $\|\cdot \|$ is the $2$-norm.
\end{lem}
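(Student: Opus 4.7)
The plan is to decompose $\mathcal{A}\mathcal{A}^*$ into its diagonal and off-diagonal parts and estimate each separately in the operator $2$-norm. First I would write out the entries of the $N\times N$ self-adjoint matrix $\mathcal{A}\mathcal{A}^*$:
\[
(\mathcal{A}\mathcal{A}^*)_{ij} = \sum_{k=1}^M A_{ik}\overline{A_{jk}}, \qquad 1\leq i,j\leq N,
\]
and split $\mathcal{A}\mathcal{A}^* = D + O$ where $D_{ii} = \sum_{k=1}^M |A_{ik}|^2$ and $D_{ij}=0$ for $i\neq j$, while $O$ collects the off-diagonal entries.

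Then I would bound $\|D\|$ and $\|O\|$ separately and apply the triangle inequality. The diagonal piece is trivial: since $D$ is diagonal with nonnegative real entries, its operator $2$-norm equals
\[
\|D\| = \max_{1\leq j\leq N} D_{jj} = \max_{1\leq j\leq N}\sum_{k=1}^M |A_{jk}|^2.
\]
For the off-diagonal piece $O$, I would simply use the standard fact that the operator $2$-norm of any matrix is bounded above by its Hilbert--Schmidt (Frobenius) norm, giving
\[
\|O\| \;\leq\; \|O\|_{HS} \;=\; \Bigl(\sum_{i\neq j}|O_{ij}|^2\Bigr)^{\tfrac{1}{2}} \;=\; \Bigl(\sum_{i\neq j}\Bigl|\sum_{k=1}^M A_{ik}\overline{A_{jk}}\Bigr|^2\Bigr)^{\tfrac{1}{2}}.
\]
Combining these via $\|\mathcal{A}\mathcal{A}^*\|\leq \|D\|+\|O\|$ yields the claimed bound.

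This argument is essentially routine linear algebra, so there is no serious obstacle; the only point that requires justification is the inequality $\|O\|\leq \|O\|_{HS}$, which follows from a Cauchy--Schwarz estimate on $\langle Ox, y\rangle$ or, equivalently, from the fact that the Hilbert--Schmidt norm dominates the spectral radius of a self-adjoint matrix. The value of such a bound in the sequel is that the off-diagonal contribution is measured in a sum over pairs $i\neq j$, which in the probabilistic applications will typically exhibit cancellation (so the Hilbert--Schmidt bound is efficient), while the diagonal term captures the ``trivial'' contribution one cannot hope to beat.
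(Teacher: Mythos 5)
Your proof is correct: the split $\mathcal{A}\mathcal{A}^* = D + O$ into diagonal and off-diagonal parts, with $\|D\| = \max_j \sum_k |A_{jk}|^2$ and $\|O\| \leq \|O\|_{HS}$ followed by the triangle inequality, gives exactly the stated bound. The paper does not prove this lemma itself (it is quoted from Lemma 6.3 of \cite{nahmod2015almost}), and your argument is the standard one used there, so there is nothing further to reconcile.
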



\section{Estimate for nonlinear term}
To estimate 
$\| \mathcal{I}(\mathcal{N}(w + v_0^\omega)) \|_{X^s([0, \delta])}$, by Prop \ref{DualProp} the we just need to bound the integral $\int_0^\delta\int_{\mathbb{T}^4} \mathcal{N}(w + v_0^\omega)\overline{u^{(0)}}dxdt$, where $\delta<1$. This section will focus on estimating this integral.

\begin{prop}\label{NonlinearEstimate}
Suppose $d\geq 3$ and   $s_r(d)$ is given in (\ref{coef:srd}).
Let $0\leq \alpha< s_r(d)$, $s\in [s_c, s_c + s_r(d)-\alpha)$, $r>0$, $0\leq \dd<1$, and $I = [0,\dd]$.    There exist $\Omega_\dd\subset \Omega$ with $\PPP(\Omega_\dd^c)<e^{-1/\dd^r}$, and $c>0$, such that we obtain that
\begin{align*}
&\left|\int_0^\delta\int_{\mathbb{T}^d} \mathcal{N}(w^{(1)} + v_0^\omega, \overline{w^{(2)} + v_0^\omega}, w^{(3)} + v_0^\omega)\overline{u^{(0)}}\,dxdt\right|\\
\lesssim & \|u^{(0)}\|_{Y^{-s}(I)}\left(\dd^{c\min{\{1, s-s_c\}}}\|w^{(1)}\|_{X^s(I)}\|w^{(2)}\|_{X^s(I)}\|w^{(3)}\|_{X^s(I)}+\dd^{c}\sum_{S_J\subset\{1, 2, 3\}\atop J\neq \{1,2,3\}}\prod_{j\in S_J}\|w^{(j)}\|_{X^s(I)}\right),
\end{align*}
where $v_0^\omega$ is defined (\ref{def:v0}), $u^{(0)}\in Y^{-s}(I)$
 and $w^{(i)}\in X^s(I)$ for $i=1, 2, 3$. 
 (when the subset $S_J = \emptyset$, $\prod_{j\in S_J}\|w^{(j)}\|_{X^s(I)} =1.$)
\end{prop}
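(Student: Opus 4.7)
The plan is to expand $\mathcal N(w^{(1)}+v_0^\omega,\overline{w^{(2)}+v_0^\omega},w^{(3)}+v_0^\omega)$ by multilinearity into $2^3=8$ pieces, indexed by the subset $S\subseteq\{1,2,3\}$ of the three slots filled by $w^{(j)}$ (the complementary slots being filled by $v_0^\omega$). The piece $S=\{1,2,3\}$ is deterministic and should produce the first term $\delta^{c\min\{1,s-s_c\}}\prod_i\|w^{(i)}\|_{X^s}$, while the other seven ``random'' pieces should contribute to the second sum. For each piece I would carry out a Littlewood--Paley decomposition of all four slots (the three nonlinearity factors and the test function $u^{(0)}$) at dyadic scales $N_0,N_1,N_2,N_3$, fix an ordering such as $N_1\geq N_3$, and reduce matters to a frequency-localized quadrilinear estimate. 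Crucially, the removal of the diagonal resonance in the definition (\ref{GNonlinearity}) of $\mathcal N$ means that, for the random pieces, the summation over frequencies is restricted to indices with no coincidences, enabling the use of the trilinear large-deviation Lemma \ref{MLD}.

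For the deterministic piece I would run a standard $X^s$ trilinear estimate: H\"older in space-time at an exponent slightly above $p_c=\tfrac{2(d+2)}{d}$, combined with the Strichartz estimate of Proposition \ref{Strichartz} and the transfer principle Proposition \ref{TransferPrinciple}, controls each factor in $U^p_\Delta$. The dyadic summation loses $N_{\max}^{s_c-s}$; for $s>s_c$ this excess regularity is traded against time to produce the $\delta^{s-s_c}$ gain, while at the critical endpoint $s=s_c$ no such regularity is available and I would instead apply the small-interval transfer principle Proposition \ref{timelocalTransferPrinciple} to extract $\delta^{1/q}$ from the restriction to $I=[0,\delta]$. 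Proposition \ref{Interpolation1} or \ref{Interpolation2} then converts three of the four $U^p_\Delta$-factors into $V^2_\Delta$-factors, with the resulting logarithmic loss absorbed into a small power of $\delta$, yielding the $X^s$ and $Y^{-s}$ norms in the final bound.

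For the random pieces I would rely on the probabilistic tools of Section 2. When a single $v_0^\omega$ appears, placing it in an $L^p_tL^q_x$-norm and invoking Corollary \ref{CorLp} immediately gives a factor $\delta^c(\log N) N^{-(s_c-\alpha)}$ on the good set $\Omega_\delta$; combined with Strichartz applied to the $w^{(i)}$ factors this closes the dyadic sum provided $\alpha<s_r(d)$. When two $v_0^\omega$'s are present I would first apply Lemma \ref{LargeDeviation} to the bilinear Gaussian sum at each fixed output frequency $n$ and bound the resulting $\ell^2$-in-$n$ coefficient using the lattice counting Lemmas \ref{Counting1} and \ref{Counting2}, with the matrix estimate of Lemma \ref{lem:matrix} handling the off-diagonal quadratic form. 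The fully random piece is the most delicate: I would apply the trilinear large-deviation Lemma \ref{MLD} to obtain a sub-Gaussian tail for the associated $F_n(\omega)$, combine this with Lemma \ref{Counting2} to bound the $L^2_n$ norm, and use Lemma \ref{lem:boundFourier} to recover the time factor $\delta^c$ via the Fourier expansion in $t$ of $\mathds{1}_{[0,\delta]}$.

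The hard part will be the interplay between the critical endpoint $s=s_c$ and the probabilistic gain. At the endpoint no Sobolev excess is available, so the $\delta^c$ factor for the random pieces must be produced entirely by the finite-interval structure, which is precisely what the new Proposition \ref{timelocalTransferPrinciple} achieves and the standard global transfer principle cannot. A second technical obstacle is the sharp balance between the Strichartz exponent $p_c^+$ and the random regularity $s_c-\alpha$ that must be tracked inside each random piece; this is what pins down the threshold $s_r(d)$ in (\ref{coef:srd}). I expect the worst configurations to be the ``high-low-low'' geometry in the two-random-factor pieces and the trilinear Gaussian chaos, and these will constitute the technically heaviest parts of the proof.
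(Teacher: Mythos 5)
Your overall decomposition (eight pieces indexed by which slots carry $w$ versus $v_0^\omega$, dyadic localization, Lemma \ref{MLD} made available by the removal of the resonant diagonal in (\ref{GNonlinearity})) matches the paper, but there is a genuine gap at the heart of the matter: the pieces in which the \emph{random} factor carries the \emph{highest} frequency (the paper's Case B, and above all Case B(d) $(R_1,D_2,D_3)$, which is exactly where the thresholds $s_r(d)=\tfrac17,\tfrac4{19},\tfrac14$ come from). You dispose of every single-random piece by ``placing $v_0^\omega$ in $L^p_tL^q_x$ via Corollary \ref{CorLp} and Strichartz on the $w^{(i)}$'s,'' but that only works when the random factor sits at a low frequency (Cases A(b),(c)). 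When $R_1$ is at frequency $N_1\sim N_0$, normalizing the dual factor $\|P_{N_0}u^{(0)}\|_{Y^{-s}}$ costs a factor $N_1^{s}$, while Corollary \ref{CorLp} only supplies $N_1^{-(s_c-\alpha)}$; the frequency-localized bound then grows like $N_1^{\,s-s_c+\alpha+\epsilon}$ and the dyadic sum diverges. Gaining the missing power of $N_1$ is precisely the multilinear-smoothing step the paper builds: reduce to linear solutions on small subintervals via Proposition \ref{timelocalTransferPrinciple}, expand $\mathds{1}_J$ in time frequencies (Lemma \ref{lem:boundFourier}), bound the resulting Gaussian-times-deterministic bilinear form by the operator-norm estimate for $\mathscr{G}^*\mathscr{G}$ (Lemma \ref{lem:matrix}) combined with Lemma \ref{MLD} and the lattice counts of Lemmas \ref{Counting1}--\ref{Counting2} to win $N_1^{-1/2}$, average this with a crude bound that wins $|J|^{1-\epsilon}$, and finally upgrade from $U^2_\Delta$ to $V^2_\Delta$ (hence $Y^{-s}$, $X^s$) through Proposition \ref{Interpolation2} against an auxiliary $U^{p_c^+}_\Delta$ estimate. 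You list all of these tools, but you attach the matrix argument to the two-random pieces and single out the two- and three-random ``chaoses'' as the worst cases; in the paper those are comparatively softer (Lemma \ref{MLD} plus counting, or two applications of Corollary \ref{CorLp}), and it is the one-random-high-frequency configuration that is decisive and that your plan, as written, cannot close.

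A secondary point: for the all-deterministic piece at the endpoint $s=s_c$ you propose to extract $\delta^{1/q}$ via Proposition \ref{timelocalTransferPrinciple}. This is both unnecessary and unachievable: the claimed estimate carries $\delta^{c\min\{1,s-s_c\}}=1$ at $s=s_c$ (no smallness is asserted there --- in Section 6 the contraction at $s=s_c$ is closed instead through the $Z'^{s_c}$-norm refinement of Proposition \ref{prop:refinednonlinear}), and a uniform $\delta^c$ gain in the purely deterministic critical trilinear estimate would make the critical existence time depend only on the norm of the data, which is not the case. The paper's Case A(a) obtains its $\delta$-factor only by trading the excess regularity $s-s_c$, exactly as you do in the subcritical range, and also needs the cube (almost-orthogonality) decomposition at scale $N_2$ to sum over $N_0\sim N_1\gg N_2$, a step worth making explicit in your deterministic estimate as well.
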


To show Proposition \ref{NonlinearEstimate}, it is clear that $\mathcal{N}(w + v_0^\omega)$ can be expressed as \begin{equation}
\sum_{u^{(i)}\in\{w,v_0^\omega\}, i=1,2,3}\mathcal{N}(u^{(1)}, \overline{u^{(2)}}, u^{(3)}).
\end{equation}

We dyadic decompose
 \[u_i = P_{N_i} u^{(i)},\text{ where } i\in \{0, 1, 2, 3\}.\] 

By the symmetry, in the following paper we suppose that $N_1\geq N_2 \geq N_3$, and we need to estimate the following integral case by case,
\begin{equation}\label{eq:NonlinearIntegral}
\int_0^\dd\int_{\mathbb{T}^d} \mathcal{N}(\widetilde{u}_{1}, \widetilde u_2, \widetilde u_3) \overline u_0\, dxdt,
\end{equation}
{where } $\widetilde{u_i} = u_i \text{ or } \overline{u_i}$ and only one of $\widetilde{u_i}$ can be $\overline{u_i}$.
\begin{remark}
To make the integral $\int_0^\dd\int_{\mathbb{T}^d} \widetilde{u}_{1} \widetilde u_2 \widetilde u_3 \overline u_0\, dxdt$ (which is the main term of (\ref{eq:NonlinearIntegral})) nontrivial, the two highest frequencies must be comparable, which means $N_1 \sim \max \{N_0, N_2\}$
($\frac{1}{4} N_1 \leq  \max {\{N_0, N_2\}} \leq 4 N_1$). It is easy to show if $\frac{1}{4} N_1 >  \max {\{N_0, N_2\}}$ or $\max {\{N_0, N_2\}} > 4N_1$, then the integral $\int_0^\dd\int_{\mathbb{T}^d} \widetilde{u}_{1} \widetilde u_2 \widetilde u_3 \overline u_0\, dxdt$ is zero. Then the following two cases need to considered: 
\begin{itemize}
\item $N_0\sim N_1\geq N_2$;
\item $N_0< N_2 \sim N_1$.
\end{itemize}
\end{remark}

Now let's summarize all cases of $(u_1, u_2, u_3)$ we should consider. 
Denote \begin{equation}\label{def:DR}
R_i = P_{N_i} v_0^\omega \text{ and }  D_i = P_{N_i} w \text{ for } i\in\{1, 2, 3\}.
\end{equation} 
The list of all cases of $(u_1, u_2, u_3)$ is below:

\begin{enumerate}
\item[A.]  $u_1 = D_1$: 
\begin{enumerate}
\item $(D_1, D_2, D_3)$;
\item $(D_1, D_2, R_3)$;
\item $(D_1, R_2, D_3)$;
\item $(D_1, R_2, R_3)$;
\end{enumerate}
\item[B.] $u_1 = R_1$:
\begin{enumerate}
\item $(R_1, R_2, R_3)$;
\item $(R_1, R_2, D_3)$;
\item $(R_1, D_2, R_3)$;
\item $(R_1, D_2, D_3)$.
\end{enumerate}
\end{enumerate}

\subsection{\textbf{Case A (a)}}
We consider the all deterministic case $u_i= D_i$ for all $i\in\{1,2,3\}$. It's directly the local well-posed result for the critical data following the strichartz estimates Proposition \ref{Strichartz} (the case $d=4$ is in \cite{KV}). 
\begin{prop}\label{AllDet}
Assume $N_i$, $i=0,1,2,3$, are dyadic numbers and $N_1\geq N_2 \geq N_3$, and $0 
\leq\dd \leq 1$ . For $s\geq s_c$, there exists  $c>0$, so that we can bound the integral:
$$\left|\int_0^\dd\int_{\mathbb{T}^d} \mathcal{N}(\widetilde{D}_{1}, \widetilde D_2, \widetilde D_3) \overline u_0 \, dxdt\right| \lesssim \dd^{c \min\{1, s-s_c\}} (\frac{N_3\min{\{N_0, N_2\}}}{N_2^2})^c \frac{1}{N_2^{s-s_c}} \|{u}_0\|_{Y^{-s}}\|{D}_1\|_{X^s}\|{D}_2\|_{X^{s}}\|{D}_3\|_{X^{s}}$$
where $u_0$, $\widetilde{D}_{1}$, $\widetilde D_2$, and $\widetilde D_3$ is defined as (\ref{def:DR}).
\end{prop}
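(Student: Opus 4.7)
Split the nonlinearity as $\mathcal{N} = \mathcal{N}_1 + \mathcal{N}_2$, where the resonant piece $\mathcal{N}_2(D_1, \overline{D_2}, D_3) = -2\rho D_1 \int_{\TTT^d}\overline{D_2} D_3\, dx$ forces $N_2 = N_3$ and is treated separately by Cauchy--Schwarz in $x$ followed by a H\"older--Strichartz argument; it is strictly easier than the main trilinear piece. The bulk of the work is therefore to bound
\[
\left|\int_0^\dd\int_{\TTT^d}\widetilde{D}_1\,\widetilde{D}_2\,\widetilde{D}_3\,\overline{u_0}\,dxdt\right|.
\]
By the frequency-support remark already recorded in the text, I split into the two configurations (i) $N_0\sim N_1\geq N_2$ and (ii) $N_0<N_2\sim N_1$, and handle each by pairing the two top-frequency factors through a cube decomposition.

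The core tool will be the Strichartz estimate (Proposition~\ref{Strichartz}) at an exponent $p$ slightly above the critical $p_c = \tfrac{2(d+2)}{d}$. I intend to apply H\"older in spacetime with four exponents $p_i$ satisfying $\sum p_i^{-1} = 1$, distributing them across $\widetilde{D}_1,\widetilde{D}_2,\widetilde{D}_3,u_0$, invoke Proposition~\ref{Strichartz} in the refined cube form (\ref{eq:Strichartz1})--(\ref{eq:Strichartz2}), and move from $L^p$ to $U^p_{\Delta}$ by the transfer principle (Proposition~\ref{TransferPrinciple}). To produce the orthogonality gain $(N_3\min\{N_0,N_2\}/N_2^2)^c$, I decompose the top-frequency factor into cubes of side-length $\sim\min\{N_2,N_0\}$ (configuration (i)) or $\sim N_3$ (configuration (ii)), pair each cube with the matching cube for the partner factor so that their Fourier sum lies in a shell of the correct thickness, and sum by $\ell^2$ almost-orthogonality. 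The interpolation of Proposition~\ref{Interpolation1}, combined with the embedding (\ref{eq:Embedding}), will then convert the $U^p_{\Delta}$ bound on each factor to a $V^2_{\Delta}$ (hence $X^s$, $Y^{-s}$) bound, at the price of a logarithm in $N_1$ that is harmless since it is multiplied by the positive power $(N_3\min\{N_0,N_2\}/N_2^2)^c$.

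The $\dd^{c\min\{1,s-s_c\}}$ factor together with $N_2^{-(s-s_c)}$ is obtained by trading derivatives for time. When $s = s_c$ the exponent $\min\{1,s-s_c\}=0$ and no time-gain is needed; when $s>s_c$ I apply H\"older on the $t$-integral,
\[
\int_0^\dd |\,\cdot\,|\,dt \leq \dd^{\theta}\|\,\cdot\,\|_{L^{1/(1-\theta)}_t},
\]
for a small $\theta = \theta(s-s_c)>0$, and I absorb the deficit in the Strichartz scaling exponent into the highest of the last two factors, producing a factor $\dd^{c(s-s_c)}N_2^{-(s-s_c)}$. Capping $\theta$ at a fixed constant when $s - s_c \geq 1$ gives the $\min\{1,s-s_c\}$ in the exponent.

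The main obstacle I anticipate is the cube decomposition: depending on which two of the four positions host the two highest frequencies (positions $\{1,0\}$ in subcase (i) versus positions $\{1,2\}$ in subcase (ii)), the correct cube scale and the compatibility condition with $u_0$ change, and one must verify that in every configuration the output of the almost-orthogonality argument is exactly $(N_3\min\{N_0,N_2\}/N_2^2)^c$ and not a weaker ratio. A secondary subtlety is choosing $p - p_c > 0$ small enough that the logarithms from Proposition~\ref{Interpolation1} and from the cube count can be absorbed into a single positive exponent $c$, while large enough that the Sobolev gain above scaling is genuine. Once these bookkeeping issues are settled, the stated estimate follows by combining the four ingredients above.
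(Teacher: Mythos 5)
Your overall strategy is the paper's: split off the resonant piece $\mathcal{N}_2$, decompose the two highest-frequency factors into cubes, apply H\"older in spacetime with exponents slightly above $p_c$, invoke the $U^p_\Delta$ form of the Bourgain--Demeter estimate (Proposition \ref{Strichartz}), and generate the $\delta^{c(s-s_c)}$ factor by H\"older in time on the lowest-frequency factor. Two steps, however, are off as written. First, the factor $(N_3\min\{N_0,N_2\}/N_2^2)^c$ is not an ``orthogonality gain'': in the paper the cube decomposition is always at scale $N_2$, acts on $u_0$ and $D_1$, and only serves to give a bounded-multiplicity pairing $C_j\sim C_k$ (in the configuration $N_0<N_2\sim N_1$ it is in fact vacuous, as the paper itself notes). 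The ratio comes purely from the scaling exponents $\frac{d}{2}-\frac{d+2}{p}>0$ at $p$ slightly above $p_c$, applied to frequency supports of sizes $\min\{N_0,N_2\}$, $N_2$, $N_2$, $N_3$, combined with the regularity surplus $s-s_c$ spent on $D_2$ and $D_3$. Your proposed cubes of side $\sim N_3$ in configuration (ii) would not give a bounded-multiplicity pairing of $D_1$ against $D_2$ when $N_0>N_3$ (the sum $n_1\pm n_2$ is only confined at scale $\max\{N_0,N_3\}$), and no shell argument of this kind is needed there: in that configuration the gain is simply the low-frequency Strichartz constant $\min\{N_0,N_2\}^{\frac{d}{2}-\frac{d+2}{p}}$ carried by $u_0$.

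Second, the interpolation Proposition \ref{Interpolation1} is neither applicable as you invoke it nor needed, and the logarithm you are prepared to pay is not affordable. To use Proposition \ref{Interpolation1} you must also supply a companion estimate in $U^2_\Delta$ norms with a strictly smaller constant, which your outline never provides. More importantly, since every Lebesgue exponent in play is strictly larger than $2$, the embedding $V^2_{rc}\hookrightarrow U^p_\Delta$ from Remark \ref{rmk:embedding} together with (\ref{eq:Embedding}) already converts the $U^p_\Delta$ Strichartz bounds into $X^s$ and $Y^{-s}$ bounds with no loss; this is exactly what the paper does, and no logarithm appears. Your justification that a $\log N_1$ loss is harmless ``since it is multiplied by the positive power $(N_3\min\{N_0,N_2\}/N_2^2)^c$'' fails in the regime $s=s_c$, $N_3\sim N_2$, $N_1$ arbitrary, where that factor is $\sim 1$ while the stated bound permits no divergence in $N_1$; so if your route genuinely incurred such a log, the proposition as stated would not follow. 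With the cube scale fixed at $N_2$ and the interpolation step replaced by the embedding, your outline reduces to the paper's proof.
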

\begin{proof}

We decompose $\RRR^d = \cup_j C_j$, where each $C_j$ is a cube of side-length $N_2$. Let $P_{C_j}$ denote the family of Fourier
projections onto the cube $C_j$. We write $C_j \sim C_k$ if the sum set $\{c_1+c_2: c_1\in C_j, c_2\in C_k\}$ overlaps the Fourier support of $P_{\leq 2N_2}$. Observe that given $C_k$ there are a bounded
number of $C_j \sim C_k$. If $N_0\sim N_1\geq N_2$, and we decompose $u_0$ and $D_1$ with Fourier projections onto the small cubes of size $N_2$. If $N_0< N_2 \sim N_1$, and we also decompose $u_0$ and $D_1$ with Fourier projections onto the cubes of size $N_2$, however the frequency of $u_0$ has Fourier support of $P_{\leq N_0}$ which is only in one cube of size $N_2$. For the case of $N_0< N_2 \sim N_1$, the cube decomposition doesn't help, but for simplicity of notations, we use the same cube decomposition.

\noindent\textit{(1) Case: $d=3$}

First, let's consider $\N_1 (\widetilde{D}_1, \widetilde{D}_2, \widetilde{D}_3 ) = \pm \widetilde{D}_1\widetilde{D}_2\widetilde{D}_3$.
Set $\frac{11}{2}^+$ satisfying
\begin{equation}\label{coef:pq3d}
\frac{1}{\frac{11}{2}^+} = \frac{2}{11}-c_1,
\end{equation} 
where $c_1 = \min{\{\frac{2}{11}-\epsilon, \frac{2}{5}(s-s_c)\}}$. (In this paper, we always use $\epsilon$ as a small positive number which can be chosen arbitrarily small, and $\epsilon$ may be different in the different positions.)

By the cube decomposition, and H\"{o}lder inequality, we obtain that
\begin{align}\label{eq:Aa1}
& \left|\int_0^\dd\int_{\mathbb{T}^3}\overline{u}_0\widetilde{D}_1\widetilde{D}_2\widetilde{D}_3 \, dxdt\right|\\\notag
 \leq& \sum_{C_j\sim C_k}\left|\int_0^\dd\int_{\mathbb{T}^3}(P_{C_j}\overline{u}_0)(P_{C_k}\widetilde{D}_1)\widetilde{D}_2\widetilde{D}_3 \, dxdt\right|\\\notag
 \lesssim& \sum_{C_j\sim C_k} \|P_{C_j}u_0\|_{L_{t,x}^{\frac{11}{3}}}\|P_{C_k}D_1\|_{L_{t,x}^{\frac{11}{3}}}\|D_2\|_{L_{t,x}^{\frac{11}{3}}}\|D_3\|_{L_{t,x}^{\frac{11}{2}}}\\\label{eq:Aa2}
 \leq& \dd^{c_1}\sum_{C_j\sim C_k} \|P_{C_j}u_0\|_{L_{t,x}^{\frac{11}{3}}}\|P_{C_k}D_1\|_{L_{t,x}^{\frac{11}{3}}}\|D_2\|_{L_{t,x}^{\frac{11}{3}}}\|D_3\|_{L_{t,x}^{\frac{11}{2}^+}}.
 \end{align}
By Strichartz estimates (Lemma \ref{Strichartz}) and (\ref{eq:Aa2}), we obtain that
 \begin{align*}
 (\ref{eq:Aa1})\leq & \dd^{c_1}\sum_{C_j\sim C_k}\min{\{N_0, N_2\}}^{\frac{3}{22}}N_2^{\frac{6}{22}} N_3^{\frac{13}{22}+5c_1}\|P_{C_j}u_0\|_{Y^0}\|P_{C_k}D_1\|_{X^0}\|D_2\|_{X^0}\|D_3\|_{X^0}\\
 \lesssim & \dd^{c_1}(\frac{\min\{N_0, N_2\}}{N_2})^{\frac{3}{22}}(\frac{N_3}{N_2})^{\frac{1}{11}} N_3^{2(s-s_c)}\sum_{C_j\sim C_k} \|P_{C_j}u_0\|_{Y^{-s}}\|P_{C_k}D_1\|_{X^s}\|D_2\|_{X^{s_c}}\|D_3\|_{X^{s_c}}\\
 \lesssim &  \dd^{c\min\{1, (s-s_c)\}} (\frac{N_3\min{\{N_0, N_2\}}}{N_2^2})^c \frac{1}{N^{s-s_c}_2} \|{u}_0\|_{Y^{-s}}\|{D}_1\|_{X^s}\|{D}_2\|_{X^{s}}\|{D}_3\|_{X^{s}},
\end{align*}
where $c = \frac{1}{11}$.

Second, let's consider $\N_2(\widetilde{D}_1, \widetilde{D}_2, \widetilde{D}_3) = \pm \widetilde{D}_1 \int_{\mathbb{T}^3} \widetilde{D}_2 \widetilde{D}_3 dx$.
\begin{align*}
& \left|\int_0^\dd\int_{\mathbb{T}^3}\overline{u}_0\widetilde{D}_1dx \int_{\mathbb{T}^3}\widetilde{D}_2\widetilde{D}_3 dxdt\right|\\
\leq& \sum_{C_j\sim C_k}\left|\int_0^\dd\int_{\mathbb{T}^3}(P_{C_j}\overline{u}_0)(P_{C_k}\widetilde{D}_1)dx\int_{\TTT^3}\widetilde{D}_2\widetilde{D}_3 dxdt\right|\\
\lesssim & \sum_{C_j\sim C_k} \|P_{C_j}u_0\|_{L_{t}^{\frac{11}{3}}L_x^2}\|P_{C_k}D_1\|_{L_{t}^{\frac{11}{3}}L_x^2}\|D_2\|_{L_{t}^{\frac{11}{3}}L^2_x}\|D_3\|_{L_{t}^{\frac{11}{2}}L^2_x}\\
 \lesssim& \sum_{C_j\sim C_k} \|P_{C_j}u_0\|_{L_{t,x}^{\frac{11}{3}}}\|P_{C_k}D_1\|_{L_{t,x}^{\frac{11}{3}}}\|D_2\|_{L_{t,x}^{\frac{11}{3}}}\|D_3\|_{L_{t,x}^{\frac{11}{2}}}.
\end{align*}

Then we follow the same approach for $\mathcal{N}_1$ term,  we can hold the same bound of $\N_2$.

\noindent\textit{(2) Case: $d\geq 4$}

First, let's consider $\N_1 (\widetilde{D}_1, \widetilde{D}_2, \widetilde{D}_3 ) = \pm \widetilde{D}_1\widetilde{D}_2\widetilde{D}_3$.
Set $3^{+}$ and $\infty^-$ satisfying the following conditions:
\begin{equation}\label{coef:pq4d}
\frac{1}{\infty^-} = c_2,\qquad \frac{1}{3^+} = \frac{1}{3}-\frac{c_2}{3}.
\end{equation}
where $c_2 = \frac{2}{d+2}\min\{\frac{1}{4}, s-s_c\} +  c_3$, $c_3 = \frac{1}{d+2} \min \{s-s_c, \frac{1}{3}\}$.

By the cube decomposition, H\"{o}lder inequality, and Lemma \ref{Strichartz}, we obtain that
\begin{align*}
& \left|\int_0^\dd\int_{\mathbb{T}^d}\overline{u}_0\widetilde{D}_1\widetilde{D}_2\widetilde{D}_3 \, dxdt\right|\\
 \leq& \sum_{C_j\sim C_k}\left|\int_0^\dd\int_{\mathbb{T}^d}(P_{C_j}\overline{u}_0)(P_{C_k}\widetilde{D}_1)\widetilde{D}_2\widetilde{D}_3 \, dxdt\right|\\
 \lesssim& \sum_{C_j\sim C_k} \|P_{C_j}u_0\|_{L_{t,x}^{3^+}}\|P_{C_k}D_1\|_{L_{t,x}^{3^+}}\|D_2\|_{L_{t,x}^{3^+}}\|D_3\|_{L_{t,x}^{\infty^-}}\\
 \leq & \dd^{c_3}\sum_{C_j\sim C_k}\min\{N_0, N_2\}^{\frac{d}{6}-\frac{2}{3}+\frac{(d+2)c_2}{3}}N_2^{\frac{d}{3}-\frac{4}{3}+\frac{2(d+2)c_2}{3}} N_3^{\frac{d}{2}-(d+2)(c_2-c_3)}\\
 &\times\|P_{C_j}u_0\|_{Y^0}\|P_{C_k}D_1\|_{X^0}\|D_2\|_{X^0}\|D_3\|_{X^0}\\
 \lesssim & \dd^{c_3}(\frac{\min\{N_0, N_2\}}{N_2})^{\frac{d}{6}-\frac{2}{3}+\frac{(d+2)c_2}{3}}  (\frac{N_3}{N_2})^{\max{\{\frac{1}{2}, 1- 2(s-s_c)\}}-\min \{s-s_c, \frac{1}{3}\}} N_3^{\min \{s-s_c, \frac{1}{3}\}}\\
 &\times\sum_{C_j\sim C_k} \|P_{C_j}u_0\|_{Y^{-s}}\|P_{C_k}D_1\|_{X^s}\|D_2\|_{X^{s_c}}\|D_3\|_{X^{s_c}}\\
 \lesssim &  \dd^{c\min\{1, s-s_c\}} (\frac{N_3\min\{N_0, N_2\}}{N_2^2})^c\frac{1}{N_2^{s-s_c}} \|{u}_0\|_{Y^{-s}}\|{D}_1\|_{X^s}\|{D}_2\|_{X^{s}}\|{D}_3\|_{X^{s}},
\end{align*}
where $c = \frac{1}{3(d+2)}$ (it's easy to check that $\max{\{\frac{1}{2}, 1- 2(s-s_c)\}}-\min \{s-s_c, \frac{1}{3}\}\geq \frac{1}{6}> c$).

Second, let's consider $\N_2(\widetilde{D}_1, \widetilde{D}_2, \widetilde{D}_3) = \pm \widetilde{D}_1 \int_{\mathbb{T}^d} \widetilde{D}_2 \widetilde{D}_3 dx$.
\begin{align*}
& \left|\int_0^\dd\int_{\mathbb{T}^d}\overline{u}_0\widetilde{D}_1dx \int_{\mathbb{T}^d}\widetilde{D}_2\widetilde{D}_3 dxdt\right|\\
\leq& \sum_{C_j\sim C_k}\left|\int_0^\dd\int_{\mathbb{T}^d}(P_{C_j}\overline{u}_0)(P_{C_k}\widetilde{D}_1)dx\int_{\TTT^d}\widetilde{D}_2\widetilde{D}_3 dxdt\right|\\
\lesssim & \sum_{C_j\sim C_k} \|P_{C_j}u_0\|_{L_{t}^{3^+}L_x^2}\|P_{C_k}D_1\|_{L_{t}^{3^+}L_x^2}\|D_2\|_{L_{t}^{3^+}L^2_x}\|D_3\|_{L_{t}^{\infty^-}L^2_x}\\
 \lesssim& \sum_{C_j\sim C_k} \|P_{C_j}u_0\|_{L_{t,x}^{3^+}}\|P_{C_k}D_1\|_{L_{t,x}^{3^+}}\|D_2\|_{L_{t,x}^{3^+}}\|D_3\|_{L_{t,x}^{\infty^-}}.
\end{align*}

Then following the same approach for $\mathcal{N}_1$ term,  we can hold the bound of $\N_2$.

\end{proof}

\subsection*{\textbf{Case A (b)}}
We consider the case $(u_1, u_2, u_3)=(D_1, D_2, R_3)$.
\begin{prop}
Assume $N_i$, $i=0,1,2,3$, are dyadic numbers and $N_1\geq N_2 \geq N_3$, and $0 
\leq\dd \leq 1$ . For $s\geq s_c$ and $0\leq \alpha < s_c$, there exist $c, r>0$ and subset $\Omega_{\delta}\subset\Omega$ with $\PPP(\Omega^c_\dd)\leq e^{-1/\dd^r}$,  so that for all $\w\in \Omega_{\dd}$ and all $N_1\geq N_2 \geq N_3$, we can bound the integral:
$$\left|\int_0^\dd\int_{\mathbb{T}^d} \mathcal{N}(\widetilde{D}_{1}, \widetilde D_2, \widetilde R_3) \overline u_0 dxdt\right| \lesssim \dd^c (\frac{1}{N_2N_3})^c \|{u}_0\|_{Y^{-s}}\|{D}_1\|_{X^s}\|{D}_2\|_{X^{s}},$$
where $u_0$, $\widetilde{D}_{1}$, $\widetilde D_2$, and $\widetilde R_3$ is defined as (\ref{def:DR}).

\end{prop}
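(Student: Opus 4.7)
The plan is to mirror the analysis of Case A(a), with the essential new ingredient that the random factor $\widetilde R_3 = P_{N_3} v_0^\omega$ is controlled probabilistically via Corollary \ref{CorLp} rather than deterministically via Strichartz. Because $R_3$ bypasses the transfer-principle machinery, no Strichartz loss in $N_3$ is paid, and the random bound $\|P_{N_3} R_3\|_{L^{p_3}_{t,x}([0,\delta]\times\TTT^d)} \lesssim \delta^c \log N_3 / N_3^{s_c - \alpha}$ simultaneously supplies the $\delta^c$ factor and the $N_3^{-c}$ piece of the claimed $(N_2 N_3)^{-c}$ summability, so that no $\|D_3\|_{X^s}$-norm or $\|R_3\|$-norm ever appears on the right-hand side.

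Concretely, I split $\mathcal{N} = \mathcal{N}_1 + \mathcal{N}_2$ as in \eqref{GNonlinearity} and treat the main term
\[
\int_0^\delta \int_{\TTT^d} \overline u_0 \, \widetilde D_1 \, \widetilde D_2 \, \widetilde R_3 \, dxdt = \sum_{C_j \sim C_k} \int_0^\delta \int_{\TTT^d} P_{C_j}\overline u_0 \cdot P_{C_k} \widetilde D_1 \cdot \widetilde D_2 \cdot \widetilde R_3 \, dxdt
\]
via cube decomposition at scale $N_2$. Restricting to the event $\Omega_\delta$ from Corollary \ref{CorLp} and viewing the three deterministic slots as free Schr\"odinger evolutions $\phi_i \mapsto e^{it\Delta}\phi_i$, on any subinterval $J \subset [0,\delta]$ I apply H\"older's inequality with spatial--temporal exponent $p$ just above the Strichartz threshold $p_c = 2(d+2)/d$ on $(u_0, D_1, D_2)$ and the H\"older-conjugate exponent $p_3$ on $R_3$. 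Strichartz \eqref{eq:Strichartz1} then produces the usual $N_2^{d/2 - (d+2)/p}$-type losses on the three deterministic factors, while Corollary \ref{CorLp} controls the random factor; a small additional H\"older loss in time supplies the factor $|J|^{1/q}$ required by Proposition \ref{timelocalTransferPrinciple}, which in turn lifts the estimate to a $U^p_\Delta$-bound on the three deterministic slots.

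Proposition \ref{Interpolation2} then promotes $U^p_\Delta$ to $V^2_\Delta$ at a logarithmic cost, and the embeddings \eqref{eq:Embedding} identify these with $Y^{-s}$ for $u_0$ and $X^s$ for $D_1, D_2$. Summing over the cube pairs $C_j \sim C_k$ by Cauchy--Schwarz, the accumulated $N_2$-loss from Strichartz is absorbed by the $N_2^{-s}$ extracted from $\|D_2\|_{X^s}$, while the untouched $N_3$-gain from Corollary \ref{CorLp} delivers the desired $(N_2 N_3)^{-c}$ factor. The companion term $\mathcal N_2 = \mp 2 \widetilde D_1 \int_{\TTT^d} \widetilde D_2 \widetilde R_3 \, dx$ is treated identically, with $L^p_t L^2_x$ versions of the same bounds replacing $L^p_{t,x}$ in the two slots tied together by $x$-integration. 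The principal obstacle is the joint choice of $(p, p_3)$: one needs $p > p_c$ for Strichartz, $p_3 \geq 2$ for Corollary \ref{CorLp}, the conjugacy $3/p + 1/p_3 = 1$, and the net $N_3$-power strictly negative after balancing the Strichartz loss against the probabilistic gain $N_3^{-(s_c-\alpha)+\epsilon}$. These constraints are simultaneously satisfiable exactly in the range $\alpha < s_c$ needed here, and more restrictively drive the threshold $\alpha < s_r(d)$ in \eqref{coef:srd} once the corresponding purely random Case B estimates are taken into account.
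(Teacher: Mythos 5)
Your ingredients for the random factor are the right ones --- cube decomposition at scale $N_2$, Strichartz on the three deterministic slots, and Corollary \ref{CorLp} on $R_3$ supplying both the $\delta^c$ and the $N_3^{-c}$ --- but the mechanism you use to convert the free-solution estimate into an $X^s$/$Y^{-s}$ bound breaks down. With your exponents ($p$ just above $p_c$ on $u_0,D_1,D_2$ and $p_3$ essentially conjugate on $R_3$), the room left for a H\"older gain in time is only $1/q = 1-3/p-1/p_3$, a small positive number; Proposition \ref{timelocalTransferPrinciple} then returns a bound in terms of $U^{p'}_\Delta$ norms with $1/p' = 1-1/q$ close to $1$, i.e.\ $p'<2$. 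Such a bound is not controlled by $\|u_0\|_{Y^{-s}}$ or $\|D_i\|_{X^s}$: by (\ref{eq:Embedding}) these spaces only control $U^p_\Delta$-type norms for $p>2$ (through $V^2_{rc}\hookrightarrow U^p$), and $V^2$ does not embed into $U^{p'}$ for $p'\leq 2$. Proposition \ref{Interpolation2} cannot repair this: it is a two-input interpolation, requiring both a $U^{q_i}_\Delta$ bound with $q_i>2$ and a genuinely stronger $U^2_\Delta$ bound, and it outputs a $V^2$ bound with the $U^2$ constant; it does not promote a single $U^{p'}$ estimate with $p'$ near $1$ to $V^2$. In the paper such a $U^2$-level input exists only in Case B, where it comes from the probabilistic $|J|^{1/2}$ gain obtained via Lemma \ref{MLD} and lattice counting, not from H\"older in time; your scheme produces no such input. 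Nor can you rescue the structure by enlarging $p$ so that $1-3/p-1/p_3>1/2$ (hence $p'>2$): the Strichartz losses $N_2^{d/2-(d+2)/p}$ then become too large to be absorbed by the $N_2^{-s}$ weight. (A smaller slip: ``$p$ just above $p_c$'' is not even admissible for $d\geq 5$, where $p_c<3$ and three factors at that exponent exceed the H\"older budget; the paper uses $11/3$ for $d=3$ and an exponent slightly above $3$ for $d\geq 4$.)

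The fix is to drop the time-local machinery altogether, which is what the paper does here: since the highest frequency is carried by the deterministic $D_1$, one estimates $P_{C_j}u_0$, $P_{C_k}D_1$ and $D_2$ directly with the $U^p_\Delta$ forms of Strichartz (\ref{eq:Strichartz3})--(\ref{eq:Strichartz2}) (which already encode the global transfer principle, Proposition \ref{TransferPrinciple}) at the exponents $11/3$ ($d=3$) or $3^{++}$ ($d\geq 4$); because these exponents exceed $2$, the embeddings (\ref{eq:Embedding}) let you pay directly with $\|u_0\|_{Y^{-s}}\|D_1\|_{X^s}\|D_2\|_{X^s}$, with no interpolation, no logarithmic loss, and no $|J|$ gain needed. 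The entire $\delta^c$ factor and the $N_3^{-c}$ decay come from Corollary \ref{CorLp} applied to $R_3$ in $L^{11/2}_{t,x}$ (resp.\ $L^{\infty^{--}}_{t,x}$), and the $N_2^{-c}$ comes from the slack between the chosen Strichartz exponent and scaling after cashing in the $X^s$ weight of $D_2$; only $\alpha<s_c$ is required, and there is no $N_3$ Strichartz loss to balance, since none is incurred.
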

\begin{proof}
Let $P_{C_j}$ denote the family of Fourier
projections onto the cube $C_j$ of size $N_2$. We write $C_j \sim C_k$ if the sum set overlaps the Fourier support of $P_{≤2N_2}$.

\noindent\textit{(1) Case: $d=3$}

First, let's consider $\N_1 (\widetilde{D}_1, \widetilde{D}_2, \widetilde{R}_3 ) = \pm \widetilde{D}_1\widetilde{D}_2\widetilde{R}_3$.

By Corollary \ref{CorLp}, there exists $\Omega_\dd$ with $\PPP(\Omega_\dd^c)< e^{-1/\dd^r}$ and $c'>0$, such that for all $N_3$ and $\w \in \Omega_\dd$, we obtain that 
\begin{equation}\label{RandomLp10}
\|R_3\|_{L^{\frac{11}{2}}_{t,x}([0,\dd]\times\TTT^3)}\leq \delta^{c'} \frac{\log{N_3}}{N_3^{s_c-\alpha}}.
\end{equation}

By Lemma \ref{Strichartz}, Cauchy-Schwartz inequality and (\ref{RandomLp10}), 
\begin{align*}
& \left|\int_0^\dd\int_{\mathbb{T}^3}\overline{u}_0\widetilde{D}_1\widetilde{D}_2\widetilde{R}_3 dxdt\right|\\
 \leq& \sum_{C_j\sim C_k}\left|\int_0^\dd\int_{\mathbb{T}^3}(P_{C_j}\overline{u}_0)(P_{C_k}\widetilde{D}_1)\widetilde{D}_2\widetilde{R}_3 dxdt\right|\\
 \lesssim& \sum_{C_j\sim C_k} \|P_{C_j}u_0\|_{L_{t,x}^{\frac{11}{3}}}\|P_{C_k}D_1\|_{L_{t,x}^{\frac{11}{3}}}\|D_2\|_{L_{t,x}^{\frac{11}{3}}}\|R_3\|_{L_{t,x}^{\frac{11}{2}}}\\
\lesssim &  \sum_{C_j\sim C_k} \min \{N_0, N_2\}^{\frac{3}{22}}N_2^{\frac{6}{22}} \|P_{C_j}u_0\|_{Y^0}\|P_{C_k}D_1\|_{X^0}\|D_2\|_{X^0}\|R_3\|_{L_{t,x}^{\frac{11}{2}}}\\
\leq & \dd^{c'} \frac{\log N_3}{N_3^{s_c-\alpha}}\frac{1}{N_2^{\frac{1}{11}+s-s_c}} \sum_{C_j\sim C_k} \|P_{C_j}u_0\|_{Y^{-s}}\|P_{C_k}D_1\|_{X^s}\|D_2\|_{X^{s}}\\
\leq & \dd^c (\frac{1}{N_2N_3})^c  \|P_{C_j}u_0\|_{Y^{-s}}\|D_1\|_{X^s}\|D_2\|_{X^s},
 \end{align*}
 where $c = min(c', s_c-\alpha-\epsilon, \frac{1}{11})$.
 
Second, $\N_2(\widetilde{D}_1, \widetilde{D}_2, \widetilde{R}_3) = \pm \widetilde{D}_1 \int_{\mathbb{T}^3} \widetilde{D}_2 \widetilde{R}_3 dx$. We can bound $|\int_0^\dd\int_{\mathbb{T}^3} \mathcal{N}_2(\widetilde{D}_{1}, \widetilde D_2, \widetilde R_3) \overline u_0 dxdt|$ by $\sum_{C_j\sim C_k} \|P_{C_j}u_0\|_{L_{t,x}^{\frac{11}{3}}}\|P_{C_k}D_1\|_{L_{t,x}^{\frac{11}{3}}}\|D_2\|_{L_{t,x}^{\frac{11}{3}}}\|R_3\|_{L_{t,x}^{\frac{11}{2}}}$, using H\"{o}lder inequality. Then we can bound the second part via the same way.

\noindent\textit{(2) Case: $d\geq 4$}

First, let's consider $\N_1 (\widetilde{D}_1, \widetilde{D}_2, \widetilde{R}_3 ) = \pm \widetilde{D}_1\widetilde{D}_2\widetilde{R}_3$.

Set $3^{++}$ and $\infty^{--}$ as following:
\begin{equation}\label{coef:pq4d-}
\frac{1}{3^{++}} = \frac{1}{3} -\frac{1}{6(d+2)}, \quad \frac{1}{\infty^{--}} = \frac{1}{2(d+2)}.
\end{equation}

By Corollary \ref{CorLp}, there exists $\Omega_\dd$ with $\PPP(\Omega_\dd^c)< e^{-1/\dd^r}$ and $c'>0$, such that for all $N_3$ and $\w \in \Omega_\dd$, we obtain that 
\begin{equation}\label{RandomLp11}
\|R_3\|_{L^{\infty^{--}}_{t,x}([0,\dd]\times\TTT^d)}\leq \delta^{c'} \frac{\log{N_3}}{N_3^{s_c-\alpha}}.
\end{equation}

By Lemma \ref{Strichartz}, Cauchy-Schwartz inequality and (\ref{RandomLp11}), 
\begin{align*}
& \left|\int_0^\dd\int_{\mathbb{T}^d}\overline{u}_0\widetilde{D}_1\widetilde{D}_2\widetilde{R}_3 dxdt\right|\\
 \leq& \sum_{C_j\sim C_k}\left|\int_0^\dd\int_{\mathbb{T}^d}(P_{C_j}\overline{u}_0)(P_{C_k}\widetilde{D}_1)\widetilde{D}_2\widetilde{R}_3 dxdt\right|\\
 \lesssim& \sum_{C_j\sim C_k} \|P_{C_j}u_0\|_{L_{t,x}^{3^{++}}}\|P_{C_k}D_1\|_{L_{t,x}^{3^{++}}}\|D_2\|_{L_{t,x}^{3^{++}}}\|R_3\|_{L_{t,x}^{\infty^{--}}}\\
\lesssim &  \sum_{C_j\sim C_k} \min \{N_0, N_2\}^{\frac{d}{6}-\frac{1}{2}}N_2^{\frac{d}{3}-1} \|P_{C_j}u_0\|_{Y^0}\|P_{C_k}D_1\|_{X^0}\|D_2\|_{X^0}\|R_3\|_{L_{t,x}^{\infty^{--}}}\\
\leq & \dd^{c'} \frac{\log N_3}{N_3^{s_c-\alpha}}\frac{1}{N_2^{\frac{1}{2}+s-s_c}} \sum_{C_j\sim C_k} \|P_{C_j}u_0\|_{Y^{-s}}\|P_{C_k}D_1\|_{X^s}\|D_2\|_{X^{s}}\\
\leq & \dd^c (\frac{1}{N_2N_3})^c  \|P_{C_j}u_0\|_{Y^{-s}}\|D_1\|_{X^s}\|D_2\|_{X^s},
 \end{align*}
 where $c = min(c', s_c-\alpha-\epsilon, \frac{1}{2})$.
 
Second, $\N_2(\widetilde{D}_1, \widetilde{D}_2, \widetilde{R}_3) = \pm \widetilde{D}_1 \int_{\mathbb{T}^d} \widetilde{D}_2 \widetilde{R}_3 dx$. We can bound $|\int_0^\dd\int_{\mathbb{T}^d} \mathcal{N}_2(\widetilde{D}_{1}, \widetilde D_2, \widetilde R_3) \overline u_0 dxdt|$ by $\sum_{C_j\sim C_k} \|P_{C_j}u_0\|_{L_{t,x}^{3^{++}}}\|P_{C_k}D_1\|_{L_{t,x}^{3^{++}}}\|D_2\|_{L_{t,x}^{3^{++}}}\|R_3\|_{L_{t,x}^{\infty^{--}}}$, using H\"{o}lder inequality. Then we can bound the second part via the same way.

\end{proof}
\subsection*{\textbf{Case A (c)}}
We consider the case $(u_1, u_2, u_3)=(D_1, R_2, D_3)$.

\begin{prop}
Assume $N_i$, $i=0,1,2,3$, are dyadic numbers and $N_1\geq N_2 \geq N_3$, and $0 
\leq\dd \leq s_c$ . For $s\geq s_c$ and $0\leq \alpha <\frac{d}{6}$, there exists $c, r>0$ and subset $\Omega_{\delta}\subset\Omega$ with $\PPP(\Omega^c_\dd)\leq e^{-1/\dd^r}$,  so that for all $\w\in \Omega_{\dd}$ and all $N_1\geq N_2 \geq N_3$, we can bound the integral:
$$\left|\int_0^\dd\int_{\mathbb{T}^4} \mathcal{N}(\widetilde{D}_{1}, \widetilde R_2, \widetilde D_3) \overline u_0 dxdt\right| \lesssim \dd^c (\frac{1}{N_2N_3})^c \|{u}_0\|_{Y^{-s}}\|{D}_1\|_{X^s}\|{D}_3\|_{X^{s}},$$
where $u_0$, $\widetilde{D}_{1}$, $\widetilde R_2$, and $\widetilde D_3$ is defined as (\ref{def:DR}).
\end{prop}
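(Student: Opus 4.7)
The strategy closely mirrors Case A(b): decompose $u_0$ and $D_1$ by Fourier projections $P_{C_j}, P_{C_k}$ onto cubes of side length $N_2$ (keeping only pairs $C_j\sim C_k$), apply H\"older, and then use Strichartz (Proposition \ref{Strichartz}) on the three deterministic factors $u_0,D_1,D_3$ together with the randomized $L^p_{t,x}$ estimate (Corollary \ref{CorLp}) on $R_2$. The only structural difference from A(b) is that the random factor now sits at the \emph{middle} frequency $N_2$, so the randomness produces a gain of $N_2^{-(s_c-\alpha)}$ while the Strichartz scaling produces a loss growing in $N_2$. The restriction $\alpha<d/6$ is precisely what makes the randomness gain dominate on $N_2$.

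More concretely, for $d=3$ I would choose the H\"older exponents $\tfrac{11}{3},\tfrac{11}{3},\tfrac{11}{2},\tfrac{11}{3}$ on $(u_0,D_1,R_2,D_3)$, and for $d\ge 4$ the exponents $3^{++},3^{++},\infty^{--},3^{++}$ defined in \eqref{coef:pq4d-}. Then
\begin{equation*}
\Bigl|\int_0^\delta\!\!\int_{\TTT^d}\overline{u_0}\,\widetilde D_1\,\widetilde R_2\,\widetilde D_3\,dxdt\Bigr|
\lesssim \sum_{C_j\sim C_k}\!\|P_{C_j}u_0\|_{L^{p_0}_{t,x}}\|P_{C_k}D_1\|_{L^{p_0}_{t,x}}\|R_2\|_{L^{q}_{t,x}}\|D_3\|_{L^{p_0}_{t,x}},
\end{equation*}
where $(p_0,q)=(\tfrac{11}{3},\tfrac{11}{2})$ or $(3^{++},\infty^{--})$. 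Applying \eqref{eq:Strichartz2} to $u_0$ and $D_1$ on cubes of scale $N_2$, \eqref{eq:Strichartz3} to $D_3$ at scale $N_3$, Corollary \ref{CorLp} to $R_2$ (producing $\delta^{c'}\log N_2 / N_2^{s_c-\alpha}$), the embedding $U^2\hookrightarrow U^{p_0}$, and finally Cauchy--Schwarz in $(C_j,C_k)$, gives the pointwise-in-frequency bound
\begin{equation*}
\delta^{c'}\,\log N_2\;\cdot\;N_2^{\,\beta_2+\alpha-s_c}\,N_3^{\,\beta_3-s}\,\|u_0\|_{Y^{-s}}\|D_1\|_{X^s}\|D_3\|_{X^s},
\end{equation*}
where $\beta_2$ is the cumulative $N_2$-exponent coming from two cube-Strichartz factors and $\beta_3$ the single dyadic-Strichartz exponent for $D_3$. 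One then splits into the two geometric regimes $N_0\sim N_1\ge N_2$ and $N_0<N_1\sim N_2$ exactly as in A(b), using in the second case $N_0^{s+\beta_2}\le N_2^{s+\beta_2}$, to reduce everything to a single bound of the form $N_2^{\beta_2+\alpha-s_c}N_3^{\beta_3-s}$.

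The main obstacle, and essentially the only nontrivial check, is verifying that both exponents are strictly negative. For $N_3$ this is automatic since $\beta_3<s_c\le s$. For $N_2$ one must check $\beta_2+\alpha-s_c<0$, i.e.\ $\alpha<s_c-\beta_2$; for $d=3$, $\beta_2=\tfrac{6}{22}$ and $s_c-\beta_2=\tfrac{5}{22}$, which is comfortably larger than $s_r(3)=\tfrac17$, and an analogous elementary check shows $\alpha<d/6$ suffices for $d\ge 4$. The logarithmic factor $\log N_2$ and any $N_0/N_2$-type ratios are then absorbed by a small loss in the Strichartz exponent (equivalently, by reducing the eventual $c$). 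Finally, $\mathcal{N}_2$ is handled in one line: after integrating $\widetilde R_2\widetilde D_3$ in $x$ and $\overline{u_0}\widetilde D_1$ in $x$, two applications of H\"older in $x$ recover the same mixed $L^{p_0}L^2_x$ norms, which are controlled by the same $L^{p_0}_{t,x}$ norms already used for $\mathcal{N}_1$, so the identical bound goes through.
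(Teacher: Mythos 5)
Your proposal matches the paper's own proof of this case essentially step for step: the same cube decomposition of $u_0$ and $D_1$ at scale $N_2$, the same H\"older exponents ($\tfrac{11}{3},\tfrac{11}{3},\tfrac{11}{2},\tfrac{11}{3}$ for $d=3$ and $3^{++},3^{++},\infty^{--},3^{++}$ for $d\geq 4$), Strichartz on the deterministic factors plus Corollary \ref{CorLp} on $R_2$, and the same exponent bookkeeping yielding $N_2^{\alpha-\frac{d}{6}}$ for $d\geq 4$ (and the slightly stronger requirement $\alpha<\tfrac{5}{22}$ for $d=3$, which is also all the paper's computation actually delivers and is harmless in the theorem's range $\alpha<s_r(3)$). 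The treatment of $\mathcal{N}_2$ and the absorption of logarithms into a smaller $c$ likewise coincide with the paper, so the proposal is correct and not a different route.
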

\begin{proof}

Let $P_{C_j}$ denote the family of Fourier
projections onto the cube $C_j$ of size $N_2$. We write $Cj \sim Ck$ if the sum set overlaps the Fourier support of $P_{≤2N_2}$.

\noindent\textit{(1) Case: $d=3$}

First, let's consider $\N_1 (\widetilde{D}_1, \widetilde{R}_2, \widetilde{D}_3 ) = \pm \widetilde{D}_1\widetilde{R}_2\widetilde{D}_3$.

By Corollary \ref{CorLp}, there exists $\Omega_\dd$ with $\PPP(\Omega_\dd^c)< e^{-1/\dd^r}$ and $c'>0$, such that for all $N_2$ and $\w \in \Omega_\dd$, we obtain that 
\begin{equation}\label{RandomLp2}
\|R_2\|_{L^{\frac{11}{2}}_{t,x}([0,\dd]\times\TTT^3)}\leq \delta^{c'} \frac{\log{N_2}}{N_2^{s_c-\alpha}}.
\end{equation}

By Lemma \ref{Strichartz}, H\"{o}lder inequality and (\ref{RandomLp2}), 
\begin{align*}
& \left|\int_0^\dd\int_{\mathbb{T}^3}\overline{u}_0\widetilde{D}_1\widetilde{R}_2\widetilde{D}_3 dxdt\right|\\
 \leq& \sum_{C_j\sim C_k}\left|\int_0^\dd\int_{\mathbb{T}^3}(P_{C_j}\overline{u}_0)(P_{C_k}\widetilde{D}_1)\widetilde{R}_2\widetilde{D}_3 dxdt\right|\\
 \lesssim& \sum_{C_j\sim C_k} \|P_{C_j}u_0\|_{L_{t,x}^{\frac{11}{3}}}\|P_{C_k}D_1\|_{L_{t,x}^{\frac{11}{3}}}\|R_2\|_{L_{t,x}^{\frac{11}{2}}}\|D_3\|_{L_{t,x}^{\frac{11}{3}}}\\
\lesssim &  \min \{N_0, N_2\}^{\frac{3}{22}}N_2^{\frac{3}{22}}N_3^{\frac{3}{22}} \sum_{C_j\sim C_k} N_2^{3\epsilon} \|P_{C_j}u_0\|_{Y^0}\|P_{C_k}D_1\|_{X^0}\|D_3\|_{X^0}\|R_2\|_{L_{t,x}^{\frac{11}{2}}}\\
\leq & \dd^{c'} \frac{\log(N_2)}{N_2^{s_c-\alpha - \frac{3}{11}}N_3^{s-\frac{3}{22}}} \sum_{C_j\sim C_k}  \|P_{C_j}u_0\|_{Y^{-s}}\|P_{C_k}D_1\|_{X^s}\|D_3\|_{X^s}\\
\leq & \dd^c (\frac{1}{N_2N_3})^c  \|P_{C_j}u_0\|_{Y^{-s}}\|D_1\|_{X^s}\|D_3\|_{X^s},
 \end{align*}
 where $c = min(c', s_c-\alpha-\frac{3}{11}-\epsilon, s_c-\frac{3}{22})$.
 
Second, $\N_2(\widetilde{D}_1, \widetilde{R}_2, \widetilde{D}_3) = \pm \widetilde{D}_1 \int_{\mathbb{T}^3} \widetilde{R}_2 \widetilde{D}_3 dx$. We can bound $|\int_0^\dd\int_{\mathbb{T}^3} \mathcal{N}_2(\widetilde{D}_{1}, \widetilde R_2, \widetilde D_3) \overline u_0 dxdt|$ by $\sum_{C_j\sim C_k} \|P_{C_j}u_0\|_{L_{t,x}^{\frac{11}{3}}}\|P_{C_k}D_1\|_{L_{t,x}^{\frac{11}{3}}}\|D_3\|_{L_{t,x}^{\frac{11}{3}}}\|R_2\|_{L_{t,x}^{\frac{11}{2}}}$, using H\"{o}lder inequality. Then we can bound the second part via the same way.

\noindent\textit{(2) Case: $d\geq 4$}

First, let's consider $\N_1 (\widetilde{D}_1, \widetilde{R}_2, \widetilde{D}_3 ) = \pm \widetilde{D}_1\widetilde{R}_2\widetilde{D}_3$.

Set $3^{++}$ and $\infty^{--}$ as (\ref{coef:pq4d-}).
By Corollary \ref{CorLp}, there exists $\Omega_\dd$ with $\PPP(\Omega_\dd^c)< e^{-1/\dd^r}$ and $c'>0$, such that for all $N_2$ and $\w \in \Omega_\dd$, we obtain that 
\begin{equation}\label{RandomLp111}
\|R_2\|_{L^{\infty^{--}}_{t,x}([0,\dd]\times\TTT^d)}\leq \delta^{c'} \frac{\log{N_2}}{N_2^{s_c-\alpha}}.
\end{equation}

By Lemma \ref{Strichartz}, Cauchy-Schwartz inequality and (\ref{RandomLp111}), 
\begin{align*}
& \left|\int_0^\dd\int_{\mathbb{T}^d}\overline{u}_0\widetilde{D}_1\widetilde{R}_2\widetilde{D}_3 dxdt\right|\\
 \leq& \sum_{C_j\sim C_k}\left|\int_0^\dd\int_{\mathbb{T}^d}(P_{C_j}\overline{u}_0)(P_{C_k}\widetilde{D}_1)\widetilde{R}_2\widetilde{D}_3 dxdt\right|\\
 \lesssim& \sum_{C_j\sim C_k} \|P_{C_j}u_0\|_{L_{t,x}^{3^{++}}}\|P_{C_k}D_1\|_{L_{t,x}^{3^{++}}}\|D_3\|_{L_{t,x}^{3^{++}}}\|R_2\|_{L_{t,x}^{\infty^{--}}}\\
\lesssim &  \sum_{C_j\sim C_k} \min \{N_0, N_2\}^{\frac{d}{6}-\frac{1}{2}}N_2^{\frac{d}{6}-\frac{1}{2}}N_3^{\frac{d}{6}-\frac{1}{2}} \|P_{C_j}u_0\|_{Y^0}\|P_{C_k}D_1\|_{X^0}\|D_3\|_{X^0}\|R_2\|_{L_{t,x}^{\infty^{--}}}\\
\leq & \dd^{c'} \frac{\log N_2}{N_2^{\frac{d}{6}-\alpha}}\frac{1}{N_3^{\frac{d}{3}-\frac{1}{2}+s-s_c}} \sum_{C_j\sim C_k} \|P_{C_j}u_0\|_{Y^{-s}}\|P_{C_k}D_1\|_{X^s}\|D_3\|_{X^{s}}\\
\leq & \dd^c (\frac{1}{N_2N_3})^c  \|P_{C_j}u_0\|_{Y^{-s}}\|D_1\|_{X^s}\|D_3\|_{X^s},
 \end{align*}
 where $c = min(c', \frac{d}{6}-\alpha-\epsilon, \frac{d}{3}-\frac{1}{2})$.
 
Second, $\N_2(\widetilde{D}_1, \widetilde{D}_2, \widetilde{R}_3) = \pm \widetilde{D}_1 \int_{\mathbb{T}^d} \widetilde{D}_2 \widetilde{R}_3 dx$. We can bound $|\int_0^\dd\int_{\mathbb{T}^d} \mathcal{N}_2(\widetilde{D}_{1}, \widetilde D_2, \widetilde R_3) \overline u_0 dxdt|$ by $\sum_{C_j\sim C_k} \|P_{C_j}u_0\|_{L_{t,x}^{3^{++}}}\|P_{C_k}D_1\|_{L_{t,x}^{3^{++}}}\|D_2\|_{L_{t,x}^{3^{++}}}\|R_3\|_{L_{t,x}^{\infty^{--}}}$, using H\"{o}lder inequality. Then we can bound the second part via the same way.

\end{proof}

\subsection*{\textbf{Case A (d)}}
We consider the case $(u_1, u_2, u_3)=(D_1, R_2, R_3)$.
\begin{prop}
Assume $N_i$, $i=0,1,2,3$, are dyadic numbers and $N_1\geq N_2 \geq N_3$, and $0 
\leq\dd \leq 1$ . For $s\geq s_c$ and $0\leq \alpha < s_c$, there exist $c, r>0$ and subset $\Omega_{\delta}\subset\Omega$ with $\PPP(\Omega^c_\dd)\leq e^{-1/\dd^r}$,  so that for all $\w\in \Omega_{\dd}$ and all $N_1\geq N_2 \geq N_3$, we can bound the integral:
$$\left|\int_0^\dd\int_{\mathbb{T}^4} \mathcal{N}(\widetilde{D}_{1}, \widetilde R_2, \widetilde R_3) \overline u_0 dxdt\right| \lesssim \dd^c (\frac{1}{N_2N_3})^c \|{u}_0\|_{Y^{-s}}\|{D}_1\|_{X^s},$$
where $u_0$, $\widetilde{D}_{1}$, $\widetilde R_2$, and $\widetilde R_3$ is defined as (\ref{def:DR}).
\end{prop}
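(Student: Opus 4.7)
The plan is to follow the template of Cases A(b) and A(c), now with two random factors instead of one. First split $\mathcal{N} = \mathcal{N}_1 + \mathcal{N}_2$ as in \eqref{GNonlinearity}; the resonant term $\mathcal{N}_2(\widetilde D_1, \widetilde R_2, \widetilde R_3) = \pm \widetilde D_1 \int_{\TTT^d} \widetilde R_2 \widetilde R_3\,dx$ is handled by Cauchy--Schwarz on the inner integral and reduces to the same $L^{p}_{t,x}$ Hölder estimate as the local term, so I focus on $\mathcal{J} := \int_0^\delta\!\int_{\TTT^d} \overline{u}_0\, \widetilde D_1\, \widetilde R_2\, \widetilde R_3\, dxdt$. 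As in the previous subsections, decompose $\RRR^d = \cup_j C_j$ into cubes of side length $N_2$ and restrict to the cube pairs $C_j \sim C_k$ whose Fourier sum set lies in the support of $P_{\le 2N_2}$, so that $\mathcal{J}$ breaks into a sum indexed by such pairs.

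For the deterministic factors $P_{C_j}u_0$ and $P_{C_k}D_1$, apply Hölder in spacetime with exponents $p_0 = p_1 = p$ chosen just above the critical Strichartz index $p_c = 2(d+2)/d$, and invoke Proposition \ref{Strichartz} to gain a factor $\min\{N_0,N_2\}^{d/2 - (d+2)/p}\, N_2^{d/2 - (d+2)/p}$. For the two random factors $\widetilde R_2, \widetilde R_3$, pick the remaining Hölder exponents $p_2 = p_3 = q \ge 2$ via $2/p + 2/q = 1$, so that $q \approx d+2$ is automatically admissible for every $d \ge 3$. Then Corollary \ref{CorLp}, applied twice and union--bounded over the dyadic scales, supplies a single exceptional set $\Omega_\delta$ with $\PPP(\Omega_\delta^c) < e^{-1/\delta^r}$ on which
\[
\|R_i\|_{L^q_{t,x}([0,\delta]\times\TTT^d)} \le \delta^{c'}\,\frac{\log N_i}{N_i^{s_c - \alpha}}, \qquad i = 2,3,
\]
uniformly in $N_i$.

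Finally, sum over $(j,k)$ by Cauchy--Schwarz in the cube indices, using the essential disjointness of the cubes to orthogonalise $u_0$ and $D_1$, and convert the $U^p_\Delta L^2$ norms into the $X^s$ norms by trading factors of $N_0^s$ and $N_1^{-s}$ and recalling $N_0 \sim N_1$ whenever $N_0 \ge N_2$ (when $N_0 < N_2 \sim N_1$ the cube decomposition of $u_0$ is trivial and the same bound applies). The combined estimate is at worst
\[
\delta^{c}\, N_2^{d - 2(d+2)/p - (s_c - \alpha)}\, N_3^{-(s_c - \alpha)}\, (\log N_2)(\log N_3)\, \|u_0\|_{Y^{-s}}\|D_1\|_{X^s}.
\]
Choosing $p$ sufficiently close to $p_c$ makes the Strichartz loss $d - 2(d+2)/p$ arbitrarily small, so the exponents on $N_2$ and $N_3$ are both strictly negative as long as $\alpha < s_c$, and after absorbing the logarithms into a slightly smaller power we obtain the claimed factor $\delta^c (N_2 N_3)^{-c}$. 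The main technical obstacle is the joint optimisation: $p$ must be strictly above $p_c$ (so that Proposition \ref{Strichartz} and the transfer to $U^p_\Delta$ are available) and close enough that the Strichartz loss is strictly dominated by $s_c - \alpha$; both constraints are compatible for every $0 \le \alpha < s_c$ and every $d \ge 3$, which is exactly why this case tolerates the widest range of $\alpha$ among Cases A(a)--(d).
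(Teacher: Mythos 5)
Your proposal is correct and follows essentially the same route as the paper's proof of this case: cube decomposition at scale $N_2$ with $C_j\sim C_k$, H\"older with two exponents just above $p_c=\frac{2(d+2)}{d}$ on $P_{C_j}u_0$ and $P_{C_k}D_1$ (Proposition \ref{Strichartz} plus the $U^p_\Delta$/$V^2$ embeddings), the dual exponents $q$ with $\frac{2}{p}+\frac{2}{q}=1$ on $R_2,R_3$ handled by Corollary \ref{CorLp}, and Cauchy--Schwarz over the cube pairs, with the final decay $N_2^{-(s_c-\alpha)+}N_3^{-(s_c-\alpha)}$ and the $\delta$-gain coming from the probabilistic bound exactly as in the paper (which takes $\frac{1}{p_c^+}=\frac{d}{2(d+2)}-\epsilon$, $\frac{1}{q}=\frac12-\frac{1}{p_c^+}$).
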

\begin{proof}
Let $P_{C_j}$ denote the family of Fourier
projections onto the cube $C_j$ of size $N_2$. We write $Cj \sim Ck$ if the sum set overlaps the Fourier support of $P_{≤2N_2}$.

First, let's consider $\N_1 (\widetilde{D}_1, \widetilde{R}_2, \widetilde{R}_3 ) = \pm \widetilde{D}_1\widetilde{R}_2\widetilde{R}_3$.

Set $p_c^+$, $q$ as 
\begin{equation}\label{coef:pc+}
\frac{1}{p_c^+} = \frac{d}{2(d+2)} -\epsilon \text{ and } \frac{1}{q} = \frac{1}{2} - \frac{1}{p_c^{+}}. 
\end{equation}
By Corollary \ref{CorLp}, there exists $\Omega_\dd$ with $\PPP(\Omega_\dd^c)< e^{-1/\dd^r}$ and $c'>0$, such that for all $N$ and $\w \in \Omega_\dd$, we obtain that 
\begin{equation}\label{RandomLp3}
\|P_N v_0^\w\|_{L^{q}_{t,x}([0,\dd]\times\TTT^4)}\leq \delta^{c'} \frac{\log{N}}{N^{s_c-\alpha}}.
\end{equation}

By Lemma \ref{Strichartz}, Cauchy-Schwartz inequality and (\ref{RandomLp3}), 
\begin{align*}
& \left|\int_0^\dd\int_{\mathbb{T}^4}\overline{u}_0\widetilde{D}_1\widetilde{R}_2\widetilde{R}_3 dxdt\right|\\
 \leq& \sum_{C_j\sim C_k}\left|\int_0^\dd\int_{\mathbb{T}^4}(P_{C_j}\overline{u}_0)(P_{C_k}\widetilde{D}_1)\widetilde{R}_2\widetilde{R}_3 dxdt\right|\\
 \lesssim& \sum_{C_j\sim C_k} \|P_{C_j}u_0\|_{L_{t,x}^{p_c^+}}\|P_{C_k}D_1\|_{L_{t,x}^{p_c^+}}\|R_2\|_{L_{t,x}^{q}}\|R_3\|_{L_{t,x}^{q}}\\
\lesssim &  \sum_{C_j\sim C_k} N_2^{2\epsilon} \|P_{C_j}u_0\|_{Y^0}\|P_{C_k}D_1\|_{X^0}\|R_2\|_{L_{t,x}^{q}}\|R_3\|_{L_{t,x}^{q}}\\
\leq & \dd^{2c'} \frac{\log N_2}{N_2^{s_c-\alpha}}\frac{\log N_3}{N_3^{s_c-\alpha}} N_2^{2\epsilon}\sum_{C_j\sim C_k}  \|P_{C_j}u_0\|_{Y^{-s}}\|P_{C_k}D_1\|_{X^s}\\
\leq & \dd^c (\frac{1}{N_2N_3})^c   \|P_{C_j}u_0\|_{Y^{-s}}\|u_1\|_{X^s},
 \end{align*}
 where $c = min(2c', s_c-\alpha-\epsilon)$.
 
Second, $\N_2(\widetilde{D}_1, \widetilde{R}_2, \widetilde{R}_3) = \pm \widetilde{D}_1 \int_{\mathbb{T}^4} \widetilde{R}_2 \widetilde{R}_3 dx$. We can bound $|\int_0^\dd\int_{\mathbb{T}^4} \mathcal{N}_2(\widetilde{D}_{1}, \widetilde R_2, \widetilde R_3) \overline u_0 dxdt|$ by $\sum_{C_j\sim C_k} \|P_{C_j}u_0\|_{L_{t,x}^{p_c^+}}\|P_{C_k}D_1\|_{L_{t,x}^{p_c^+}}\|R_2\|_{L_{t,x}^{q}}\|R_3\|_{L_{t,x}^{q}}$, using H\"{o}lder inequality. Then we can bound the second part via the same way.
\end{proof}

In \textbf{Case B}, the top frequency is random term, so that the approach in \textbf{Case A} fails. 
In the following proofs of subcases of \textbf{Case B}, it will suffice to focus on the frequencies satisfying $N_0\sim N_1\geq N_2$, since if $N_0< N_2 \sim N_1$, then \textbf{Case B} can be treated as \textbf{Case A} which the top frequency is deterministic term.

\subsection*{\textbf{Case B (a)}}
We consider the all random case $(u_1, u_2, u_3)=(R_1, R_2, R_3)$. 

\begin{prop}
Assume $N_i$, $i=0,1,2,3$, are dyadic numbers and for any $N_1, N_2, N_3$, satisfying $N_1\geq N_2 \geq N_3$, and $0 
\leq\dd \leq 1$ . For  $\alpha < \frac{1}{4}$ and $s_c \leq s < s_c + \frac{1}{4} -\alpha$, there exist $c, r>0$ and subset $\Omega_{\delta}\subset\Omega$ with $\PPP(\Omega^c_\dd)\leq e^{-1/\dd^r}$,  so that for all $\w\in \Omega_{\dd}$ and all $N_1\geq N_2 \geq N_3$, we can bound the integral:
$$\left|\int_0^\dd\int_{\mathbb{T}^d} \mathcal{N}(\widetilde{R}_{1}, \widetilde R_2, \widetilde R_3) \overline u_0 dxdt\right| \lesssim \dd^c (\frac{1}{N_1})^c \|{u}_0\|_{Y^{-s}},$$
where $\widetilde{R}_{1}$, $\widetilde R_2$, and $\widetilde R_3$ is defined as (\ref{def:DR}) and
only one of $\widetilde{R}_{i}$ can be $\overline{R_i}$.
\end{prop}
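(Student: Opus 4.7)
The plan is to apply the time-localized transfer principle (Proposition \ref{timelocalTransferPrinciple}) with $p=q=2$, which reduces the task to proving the analogous bound when $u_0 = e^{it\Delta}\phi$ is a free wave, and then upgrade to general $u_0 \in Y^{-s}$ via the interpolation Proposition \ref{Interpolation2}. By the remark preceding the statement of Case B, it suffices to treat $N_0 \sim N_1 \geq N_2 \geq N_3$ (the alternative $N_0 < N_2 \sim N_1$ places the top frequency on a deterministic factor and reduces to Case A). Writing each $R_i = e^{it\Delta} P_{N_i}\phi^\omega$ and pairing against $\overline{e^{it\Delta}\phi}$ with $P_{N_0}\phi=\phi$ and $\|\phi\|_{L^2}=1$, for every subinterval $J \subset [0,\delta]$ the integral takes the form $\sum_{n_0}\overline{\widehat\phi(n_0)}\,G_{n_0}(J)$, where
\[
G_{n_0}(J) \;=\; \sum_{(n_1,n_2,n_3)\in D(n_0)}\frac{g_{n_1}\overline{g_{n_2}}g_{n_3}}{\prod_{i=1}^3\langle n_i\rangle^{d-1-\alpha}}\int_J e^{i\Phi t}\,dt, \qquad \Phi \;=\; 2\langle n_2-n_1,\, n_2-n_3\rangle,
\]
and the restriction to $D(n_0)$ from Lemma \ref{MLD} reflects the cancellation of the diagonal pieces by the gauge term $\mathcal{N}_2$. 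Cauchy--Schwarz in $n_0$ reduces everything to bounding $\sum_{n_0}|G_{n_0}(J)|^2$.

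\textbf{Probabilistic and counting estimates.} For each fixed $n_0$ and $J$, $G_{n_0}(J)$ is a trilinear Gaussian form, so Lemma \ref{MLD} gives $\PPP(|G_{n_0}(J)| > \delta^{-r'}\|G_{n_0}(J)\|_{L^2(\Omega)}) \leq \exp(-c\delta^{-2r'/3})$. A union bound over the finitely many dyadic shells $N_0,N_1,N_2,N_3$, the polynomially many $n_0$'s in the relevant shell, and a dense discrete family of endpoints $(a,b)$ parametrizing $J=[a,b]$ (the smoothness of $\int_J e^{i\Phi t}dt$ in the endpoints lets us pass from this family to all $J$'s at negligible cost) produces an exceptional set $\Omega_\delta^c$ with $\PPP(\Omega_\delta^c) \leq e^{-1/\delta^r}$ on which the $L^2(\Omega)$-bound holds pointwise. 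The variance is controlled by
\[
\sum_{n_0}\|G_{n_0}(J)\|_{L^2(\Omega)}^2 \;\leq\; \sum_{(n_1,n_2,n_3)} \frac{\min(|J|^2,\, 4/\Phi^2)}{\prod_i \langle n_i\rangle^{2(d-1-\alpha)}}.
\]
To estimate the right-hand side, fix $n_2$ (contributing $\sim N_2^d$ choices) and apply Lemma \ref{Counting2} (for fixed $\mu$ the number of $(n_1, n_3)$ with $\langle n_2-n_1, n_2-n_3\rangle = \mu$ is $\lesssim N_1^{d-1} N_3^{d-1}\min(N_1,N_3)^\epsilon$), together with the elementary summation $\sum_{\mu\in\ZZZ}\min(|J|^2,\, 1/\mu^2) \lesssim |J|$. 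Incorporating the weight $\langle n_0\rangle^{2s}\sim N_1^{2s}$, the result is $|J|$ times a product of powers of $N_1,N_2,N_3$ that delivers a genuine positive-power gain $N_1^{-c}$ provided $\alpha < s_r(d)$ and $s < s_c + s_r(d)-\alpha$; the dimension-dependent breakpoints in the definition of $s_r(d)$ come from optimizing this counting according to which of the $N_i$ carries the dominant contribution.

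\textbf{Upgrading and main obstacle.} Once this sharp $|J|^{1/2}$-bound is established for the free-wave test function, Proposition \ref{timelocalTransferPrinciple} with $p=q=2$ promotes it to the same bound for general $u_0 \in U^2_\Delta H^{-s}$. Combining this sharp $U^2$-bound with a crude $U^p$-bound ($p>2$) obtained from the ordinary transfer principle Proposition \ref{TransferPrinciple}, Proposition \ref{Interpolation2} then yields the desired estimate for $u_0\in V^2_\Delta H^{-s}\supseteq Y^{-s}$ at the cost of only a logarithmic factor, easily absorbed into $\delta^c$. The principal difficulty lies in the counting step of the middle paragraph: extracting a truly positive power of $N_1^{-1}$ (rather than a logarithmic or $O(1)$ bound) from the random trilinear sum requires simultaneously the sharp form of Lemma \ref{Counting2}, the resonance gain $\min(|J|^2, 1/\Phi^2)$, and an optimal placement of the critical weight $\langle n_0\rangle^{2s}$; the threshold $\alpha < s_r(d)$ and the dimension dependence of $s_r(d)$ reflect exactly how much loss this balance can tolerate.
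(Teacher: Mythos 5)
Your skeleton matches the paper's: reduce via Proposition \ref{timelocalTransferPrinciple} (with $q=2$) to testing against a free wave $\mathds{1}_J e^{it\Delta}\phi$ on subintervals $J\subset[0,\delta]$, control the resulting trilinear Gaussian sum by Lemma \ref{MLD} plus lattice counting, get a crude $U^{p}_\Delta$ bound ($p>2$) from Strichartz, and pass to $\|u_0\|_{Y^{-s}}$ through Proposition \ref{Interpolation2} and the embedding (\ref{eq:Embedding}). Where you genuinely diverge is in how the $|J|^{1/2}$ factor is produced. The paper expands $\mathds{1}_J$ in a Fourier series and sums $|b_k|\lesssim \log N_1$ (Lemma \ref{lem:boundFourier}); this makes the random sums independent of $J$ (so uniformity in $J$ is free), but it destroys the $|J|$-smallness, and the paper must run a second H\"older estimate (Step 2, giving $|J|^{1-\epsilon}$ but only $N_1^{-(s_c-\alpha)}$) and average the two to reach $|J|^{1/2}N_1^{-(s_c+1/4-\alpha)}$. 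You instead keep the resonance factor $\min(|J|,2/|\Phi|)$ inside the variance and use $\sum_{\mu}\min(|J|^2,\mu^{-2})\lesssim|J|$ together with Lemma \ref{Counting2}; this yields $|J|^{1/2}$ and the stronger gain $N_1^{-(s_c+1/2-\alpha)}N_2^{-(s_c-\alpha)}N_3^{-(s_c+1/2-\alpha)+\epsilon}$ in one shot, so your route even avoids Steps 2--3 of the paper. The price is that your random variables now depend on $J$, so the pointwise-in-$\omega$ bound must be made uniform over all $J$: your discretization-plus-continuity argument works, but to make the union bound over dyadic shells, the polynomially many $n_0$, and the grid of endpoints summable you need the deviation threshold to grow with the frequency (e.g.\ an extra $N_1^\epsilon$ or $\log$ factor in $\lambda$), and intervals shorter than the grid spacing should be handled by the trivial bound $|\int_J e^{i\Phi t}dt|\le|J|$; these are routine but should be said, and they are exactly what the paper's $\mathds{1}_J$-expansion is designed to avoid.

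Two inaccuracies to fix. First, the gauge does not cancel the whole diagonal: subtracting $\mathcal{N}_2$ removes the two pairing families but leaves the cube term, the paper's $\mathcal{J}_2=\sum_{n}\frac{|g_n(\omega)|^2 g_n(\omega)}{\langle n\rangle^{3(d-1-\alpha)}}e^{in\cdot x+it|n|^2}$, so your reduction to sums over $D(n_0)$ is not exact; this leftover must be estimated separately (it is harmless: by Lemma \ref{Neps} its $L^2_{x,t}$ norm is $O(\delta^{-3\epsilon}N_1^{-(5d-6\alpha)/2+3\epsilon})$, as in the paper's Step 1 b)). Second, your closing remark attributes the thresholds $\alpha<s_r(d)$ and the dimension-dependent breakpoints to this counting step; that is misplaced. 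For this all-random case the only constraints are $\alpha<\frac14$ and $s<s_c+\frac14-\alpha$ as in the statement (indeed your own exponent $N_1^{\,s-(s_c+\frac12-\alpha)+\epsilon}$ closes under even weaker assumptions); the values $s_r(3)=\frac17$ and $s_r(4)=\frac{4}{19}$ arise from Case B(d), not here. Neither point breaks your argument, but both should be corrected.
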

\begin{proof}
Let's suppose that $\widetilde{R}_1=\overline{R_1}$, $\widetilde{R}_2={R_2}$ and $\widetilde{R}_3={R_3}$, and the other cases are similar (we will also explain how to prove in the others in the following proof).

Define $S(n, m) := \{(n_1, n_2, n_3)\in \ZZZ^d\times\ZZZ^d\times\ZZZ^d : -n_1+n_2+n_3 = n,\  -|n_1|^2+|n_2|^2+|n_3|^2 = m,\  n_1\neq n_2,\ n_3,\ and\ n_i \sim N_i\}$ (For example, if we consider $\mathcal{N}(R_1, \overline{R_2}, R_3)$ case, then the corresponding $S(n, m) := \{(n_1, n_2, n_3)\in \ZZZ^d\times\ZZZ^d\times\ZZZ^d : n_1-n_2+n_3 = n,\  |n_1|^2-|n_2|^2+|n_3|^2 = m,\  n_2\neq n_1,\ n_3,\ and\ n_i \sim N_i\}$), where $m\in \ZZZ$ and $n \in \ZZZ^d$.
Then we have
\begin{align*}
&\N(\overline R_1, R_2, R_3) := {\J}_1 + {\J}_2\\
=&\sum_{n\in \ZZZ^d, m\in \ZZZ} e^{in\cdot x+itm}\sum_{S(n,m)}\frac{\overline{g_{n_1}(\w)}}{\langle n_1\rangle^{d-1-\alpha}}\frac{g_{n_2}(\w)}{\langle n_2\rangle^{d-1-\alpha}}\frac{g_{n_3}(\w)}{\langle n_3\rangle^{d-1-\alpha}}\\
+& \sum_{n\in \ZZZ^d, n\sim N_i,\ i=1,2,3} \frac{|g_n(\w)|^2g_{n}(\w)}{\langle n\rangle^{3d-3-3\alpha}} e^{in\cdot x+it|n|^2}
\end{align*}

\noindent\textbf{Step 1 a)} 
{First}, let's consider $\J_1$ term.
By Prop \ref{timelocalTransferPrinciple}, to estimate $|\int_0^\dd\int_{\mathbb{T}^d} \overline u_0\J_1 (\overline R_1, R_2, R_3) dxdt|$, we can first consider $u_0$ as a linear solution $\mathds{1}_J e^{it\Delta} \phi$ in any small interval $J\subset [0, \delta]$ and get the bound of $\left|\int_{J\times\TTT^d} \overline{P_{N_0} e^{it\Delta}\phi} \J_1(\overline R_1, R_2, R_3) \,dxdt\right|$. Suppose $\phi (x) = \sum_{n\in \ZZZ^d} a_n e^{in\cdot x}$ and $\mathds{1}_J (t) = \sum_{k\in \ZZZ} b_k e^{ikt}$.

\begin{align*}
&\left|\int_{J\times\TTT^d} \overline{P_{N_0} e^{it\Delta}\phi} \J_1(\overline R_1, R_2, R_3) \,dxdt\right|\\
=& \left|\sum_{n\in \ZZZ^d,\, |n|\sim N_0\atop{k\in \ZZZ,\, |k|\lesssim N_1^2}}\sum_{S(n, |n|^2+k)} b_k \overline{a_n} \frac{\overline{g_{n_1}(\w)}}{\langle n_1\rangle^{d-1-\alpha}}\frac{g_{n_2}(\w)}{\langle n_2\rangle^{d-1-\alpha}}\frac{g_{n_3}(\w)}{\langle n_3\rangle^{d-1-\alpha}}\right|
\end{align*}
then by Lemma \ref{lem:boundFourier}, we have that $\sum_{|k|\lesssim N^2_1} |b_k|\lesssim \log N_1$. So

\begin{align*}
& \left|\sum_{n\in \ZZZ^d,\, |n|\sim N_0\atop{k\in \ZZZ,\, |k|\lesssim N_1^2}}\sum_{S(n, |n|^2+k)} b_k \overline{a_n} \frac{\overline{g_{n_1}(\w)}}{\langle n_1\rangle^{d-1-\alpha}}\frac{g_{n_2}(\w)}{\langle n_2\rangle^{d-1-\alpha}}\frac{g_{n_3}(\w)}{\langle n_3\rangle^{d-1-\alpha}}\right|\\
\leq & \|P_{N_0} \phi\|_{L^2_x} \sum_{|k|\lesssim N^2_1} |b_k|\left( \sum_{n\in \ZZZ^d, |n|\sim N_0} \left|\sum_{S(n, |n|^2+k)} \frac{\overline{g_{n_1}(\w)}}{\langle n_1\rangle^{d-1-\alpha}}\frac{g_{n_2}(\w)}{\langle n_2\rangle^{d-1-\alpha}}\frac{g_{n_3}(\w)}{\langle n_3\rangle^{d-1-\alpha}} \right|^2\right)^{\frac{1}{2}}
\end{align*}

By Lemma \ref{MLD}, after choosing a subset $\Omega^1_\dd$ with $\PPP(\Omega^1_\dd)\lesssim e^{-\frac{1}{\dd^2}}$, and by Lattice counting lemma (Lemma \ref{Counting1}),
we obtain that
\begin{align*}
& \|P_{N_0} \phi\|_{L^2_x} \sum_{|k|\lesssim N^2_1} |b_k| \left( \sum_{n\in \ZZZ^d, |n|\sim N_0} \left|\sum_{S(n, |n|^2+k)} \frac{\overline{g_{n_1}(\w)}}{\langle n_1\rangle^{d-1-\alpha}}\frac{g_{n_2}(\w)}{\langle n_2\rangle^{d-1-\alpha}}\frac{g_{n_3}(\w)}{\langle n_3\rangle^{d-1-\alpha}} \right|^2\right)^{\frac{1}{2}}\\
\lesssim &\|P_{N_0} \phi\|_{L^2_x} \sum_{|k|\lesssim N^2_1} |b_k|  \frac{N_1^\epsilon}{N_1^{d-1-\alpha} N_2^{d-1-\alpha} N_3^{d-1-\alpha}} \\
&\times\left|\{(n, n_1, n_2, n_3)\in \ZZZ^d\times\ZZZ^d\times\ZZZ^d\times\ZZZ^d: (n_1, n_2, n_3)\in S(n, |n|^2+k)\}\right|^{\frac{1}{2}}\\
\leq& \frac{N_1^{\epsilon}}{N_1^{s_c+\frac{1}{2}-\alpha}N_2^{s_c-\alpha}N_3^{s_c-\alpha}} \|P_{N_0} \phi\|_{L^2_x}.
\end{align*}

\noindent\textbf{Step 1 b)}
{Second}, let's consider $\J_2$,
By Lemma \ref{Neps}, there exists a set $\Omega^2_\dd$ with $\PPP({\Omega^2_\dd}^c)<e^{-1/\dd^{\epsilon}}$, for all $\w\in \Omega^2_\dd$, we have $|g_n(\w)|\lesssim \frac{\log (\langle n\rangle + 1)}{\delta^\epsilon}$.
\begin{align*}
 \|\J_2\|^2_{L_{x,t}^2}=& \sum_{n\in \ZZZ^4, n\sim N_i,\ i=1,2,3} \left|\frac{|g_n(\w)|^2g_{n}(\w)}{\langle n\rangle^{3d-3-3\alpha}}\right|^2
 \lesssim  \dd^{-6\epsilon} \frac{1}{N_1^{5d-6\alpha-6\epsilon}}.
 \end{align*}

 If we choose $\Omega_\dd = \Omega_\dd^1 \cap \Omega_\dd^2$, then we obtain that
 \begin{align*}
 &\left|\int_{J\times\TTT^d} \overline{P_{N_0} e^{it\Delta}\phi} \mathcal{N}(\overline R_1, R_2, R_3) \,dxdt\right|\\
 \lesssim& \dd^{-\epsilon} \frac{N^\epsilon_1}{N_1^{s_c + \frac{1}{2}-\alpha}N_2^{s_c-\alpha}N_3^{s_c-\alpha}} \|P_{N_0}\phi\|_{L^2_x},
 \end{align*}
(For simplicity, we use $\epsilon$ vaguely as a constant which we can choose arbitrary small and $\epsilon's$ in the different inequalities don't have to be the exactly same.) 
 
\noindent\textbf{Step 2} If we set $1^+$ and $\infty^-$ satisfying $\frac{1}{1^+} =1 -\epsilon$ and $\frac{1}{\infty^-} = \frac{\epsilon}{3}$,
by H\"older inequality and Lemma \ref{LpLarge} after excluding a subset of probability $e^{-\frac{1}{\dd^c}}$, we have 
\begin{align*}
 &\left|\int_{J\times\TTT^d} \overline{P_{N_0} e^{it\Delta}\phi} \mathcal{N}_1(\overline R_1, R_2, R_3) \,dxdt\right|\\
 =&\left|\int_{J\times\TTT^d} \overline{P_{N_0} e^{it\Delta}\phi}\ \overline R_1 R_2 R_3 \,dxdt\right|\\
 \leq & \|P_{N_0}e^{it\Delta} \phi\|_{L^{1^+}_{t}L^{2}_x(J\times \TTT^d)}\|R_1\|_{L^{\infty^-}_{t}L^6_x}\|R_2\|_{L^{\infty^-}_{t}L^6_x}\|R_3\|_{L^{\infty^-}_{t}L^6_x}\\
 \lesssim & |J|^{1-\epsilon} \frac{N_1^\epsilon}{N_1^{s_c-\alpha}N_2^{s_c-\alpha}N_3^{s_c-\alpha}} \|P_{N_0} \phi\|_{L^2_x}.
\end{align*}
And also we have
\begin{align*}
&\left|\int_{J\times\TTT^d} \overline{P_{N_0} e^{it\Delta}\phi} \mathcal{N}_2(\overline R_1, R_2, R_3) \,dxdt\right|\\
\lesssim & \left|\int_{J}\left(\int_{\TTT^d} \overline{P_{N_0} e^{it\Delta}\phi}\ \overline{R_1}\,dx\right) \left(\int_{\TTT^d} R_2 R_3\,dx\right)dt\right|\\
\leq & \|P_{N_0}e^{it\Delta} \phi\|_{L^{1^+}_{t}L^2_x(J\times \TTT^d)}\|R_1\|_{L^{\infty^-}_{t}L^2_x}\|R_2\|_{L^{\infty^-}_{t}L^2_x}\|R_3\|_{L^{\infty^-}_{t}L^2_x}\\
\lesssim & |J|^{1-\epsilon}\|P_{N_0}e^{it\Delta} \phi\|_{L^{\infty}_{t}L^2_x(J\times \TTT^d)} \|R_1\|_{L^{\infty^-}_{t}L^2_x}\|R_2\|_{L^{\infty^-}_{t}L^2_x}\|R_3\|_{L^{\infty^-}_{t}L^2_x}\\
\lesssim & |J|^{1-\epsilon} \frac{N_1^\epsilon}{N_1^{s_c-\alpha}N_2^{s_c-\alpha}N_3^{s_c-\alpha}} \|P_{N_0} \phi\|_{L^2_x}.
\end{align*}

So we obtain that 
\[
\left|\int_{J\times\TTT^d} \overline{P_{N_0} e^{it\Delta}\phi} \mathcal{N}(\overline R_1, R_2, R_3) \,dxdt\right|\lesssim |J|^{1-\epsilon} \frac{N_1^\epsilon}{N_1^{s_c-\alpha}N_2^{s_c-\alpha}N_3^{s_c-\alpha}} \|P_{N_0} \phi\|_{L^2_x}.
\]

\noindent\textbf{Step 3}
Average the estimates in Step 1 and Step 2, we obtain that
\begin{align}
&\left|\int_{J\times\TTT^d} \overline{P_{N_0} e^{it\Delta}\phi} \mathcal{N}(\overline R_1, R_2, R_3) \,dxdt\right|\\
 \lesssim & |J|^{\frac{1}{2}} \dd^{-\epsilon} \frac{N_1^{\epsilon}}{N_1^{s_c + \frac{1}{4}-\alpha}N_2^{s_c-\alpha}N_3^{s_c-\alpha}} \|P_{N_0} \phi\|_{L^2_x}\label{eq:LocalTimeEstimate}
\end{align}

By the estimate (\ref{eq:LocalTimeEstimate}) in Step 3 and Lemma \ref{timelocalTransferPrinciple}, we hold that
\[
\left|\int_0^\dd\int_{\mathbb{T}^d} \mathcal{N}(\overline{R}_{1},  R_2, R_3) \overline u_0 dxdt\right| \lesssim \dd^{\frac{1}{2}} \frac{N_1^{\epsilon}}{N_1^{s_c + \frac{1}{4}-\alpha}N_2^{s_c-\alpha}N_3^{s_c-\alpha}} \|u_0\|_{U_\Delta^2}
\]

\noindent \textbf{Step 4}
Set $p_c^+$, $q$ as (\ref{coef:pc+}) and $\frac{1}{\infty^-} = {\epsilon}$. 
 Using Strichartz estimate (\ref{eq:Strichartz2}), we have 
\begin{align*}
&\left|\int_{[0,\dd]\times\TTT^d} \overline{u_0} \mathcal{N}(\overline R_1, R_2, R_3) \,dxdt\right|\\\
\lesssim & \|u_0\|_{L^{p_c^+}_{t,x}} \|R_1\|_{L^{p_c^+}_{t,x}} \|R_2\|_{L^q_{t,x}} \|R_3\|_{L^{q}_{t,x}}\\
\lesssim & \dd^{\frac{d+4}{2(d+2)}-\epsilon} \|u_0\|_{L^{p_c^+}_{t,x}} \|R_1\|_{L^{\infty^-}_{t,x}} \|R_2\|_{L^{\infty-}_{t,x}} \|R_3\|_{L^{\infty-}}\\
\lesssim& \dd^{\frac{d+4}{2(d+2)}} \frac{1}{N_1^{s_c-\alpha}N_2^{s_c-\alpha}N_3^{s_c-\alpha}} \|u_0\|_{U^{p_c^+}_\Delta}.\\
\end{align*}

By the interpolation lemma (Lemma \ref{Interpolation2}) and the embedding property (\ref{eq:Embedding}), we obtain that
\[
\left|\int_0^\dd\int_{\mathbb{T}^d} \mathcal{N}(\overline{R}_{1},  R_2, R_3) \overline u_0 dxdt\right| \lesssim \dd^{\frac{1}{2}} \frac{N_1^\epsilon}{N_1^{s_c + \frac{1}{4}-\alpha -s}N_2^{s_c-\alpha}N_3^{s_c-\alpha}} \|u_0\|_{Y^{-s}}
\]
Since $s<s_c + \frac{1}{4} -\alpha$, we hold the proposition.
\end{proof}

\subsection*{\textbf{Case B (b)}}
We consider the all case $(u_1, u_2, u_3)=(R_1, R_2, D_3)$.
\begin{prop}
Assume $N_i$, $i=0,1,2,3$, are dyadic numbers and $N_1\geq N_2 \geq N_3$, and $0 
\leq\dd \leq 1$ . For $s_c\leq s <s_c+\frac{1}{4}-\alpha$ and $0\leq \alpha < \frac{1}{4}$, there exist $c, r>0$ and subset $\Omega_{\delta}\subset\Omega$ with $\PPP(\Omega^c_\dd)\leq e^{-1/\dd^r}$,  so that for all $\w\in \Omega_{\dd}$ and all $N_1\geq N_2 \geq N_3$, we can bound the integral:
$$\left|\int_0^\dd\int_{\mathbb{T}^4} \mathcal{N}(\widetilde{R}_{1}, \widetilde R_2, \widetilde D_3) \overline u_0 dxdt\right| \lesssim \dd^c (\frac{1}{N_1})^c \|{u}_0\|_{Y^{-s}}\|D_3\|_{X^s},$$
 where $\widetilde{R}_{1}$, $\widetilde R_2$, and $\widetilde D_3$ is defined as (\ref{def:DR}) and only one of $\{\widetilde{R}_{1}, \widetilde R_2, \widetilde D_3\}$ can be the conjugate.
\end{prop}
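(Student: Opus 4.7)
The plan is to imitate the four-step template from the Case B(a) proof, with two key modifications to accommodate the deterministic factor $D_3$: we only have two Gaussian factors (so we use a \emph{bi}linear, rather than trilinear, Gaussian estimate), and we must carry the $X^s$-norm of $D_3$ through the pairing instead of bounding it by a power of $N_3$. As pointed out just before this subsection, we may restrict to the regime $N_0\sim N_1\geq N_2$; in the complementary case $N_0<N_2\sim N_1$ the top frequency is deterministic and the estimate reduces to Case A. By Proposition \ref{timelocalTransferPrinciple}, it suffices to bound
\[
\left|\int_{J\times\mathbb{T}^d}\overline{P_{N_0}e^{it\Delta}\phi}\,\mathcal{N}(\widetilde R_1,\widetilde R_2,\widetilde D_3)\,dxdt\right|
\]
for every subinterval $J\subset[0,\dd]$ and every $\phi\in L^2_x$ with $\|\phi\|_{L^2}=1$; the target is an estimate of $|J|^{1/2}$-type, after which the same interpolation argument used in Case B(a) converts the test function from $U^2_\Delta$ to $Y^{-s}$.

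For the multilinear Gaussian step, expand $\mathds{1}_J(t)=\sum_k b_k e^{ikt}$ using Lemma \ref{lem:boundFourier}, so that $\sum_{|k|\lesssim N_1^2}|b_k|\lesssim\log N_1$, and expand $\widetilde R_1\widetilde R_2$ on the Fourier side. The sum over $(n_1,n_2)$ splits into a diagonal piece (where the two indices collide) handled by the pointwise bound $|g_n|\lesssim\dd^{-\epsilon}\log\langle n\rangle$ from Lemma \ref{Neps}---this reduces to a deterministic estimate in the spirit of Case A(a)---and an off-diagonal bilinear Gaussian piece. For the latter, Cauchy--Schwarz in the external variable $n_0$ (paired with $\|P_{N_0}\phi\|_{L^2_x}$), followed by an application of Lemma \ref{LargeDeviation} to the bilinear Gaussian sum $\sum \overline{g_{n_1}}g_{n_2}$ at fixed frozen indices, reduces matters to counting pairs $(n_1,n_2)$ constrained by the momentum relation $n_1-n_2=n_0-n_3$ together with a resonance $|n_1|^2-|n_2|^2=|n_0|^2-|n_3|^2-k$; this forces $n_2$ to lie on the intersection of a hyperplane with $B_{N_1}$, yielding $\lesssim N_1^{d-1}$ lattice points by the second bound of Lemma \ref{Counting1}. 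The deterministic factor $D_3$ is then paired via Cauchy--Schwarz in $n_3$ against its $X^s$-norm, producing the random bound. A standard H\"older--Strichartz argument using Corollary \ref{CorLp} on $R_1,R_2$ and Proposition \ref{Strichartz} on $u_0,D_3$ furnishes a complementary $|J|^{1-\epsilon}$-type bound with no $N_1^{-1/2}$ gain; the geometric mean then produces the desired $|J|^{1/2}$ estimate with an $N_1^{-s_c-1/4+\alpha+\epsilon}$ factor.

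The $\mathcal{N}_2$ term is even simpler: the inner integral $\int_{\mathbb{T}^d}\widetilde R_2\widetilde D_3\,dx$ enforces $n_2=\pm n_3$ depending on conjugation, collapsing one summation for free. Finally, Proposition \ref{timelocalTransferPrinciple} (with $q=2$) and Proposition \ref{Interpolation2} jointly upgrade the test function from $U^2_\Delta$ to $Y^{-s}$ at the price of a logarithmic factor, and summing dyadically over $N_2,N_3$ uses the $N_1^{-c}$ slack available because $s<s_c+\tfrac{1}{4}-\alpha$. The main obstacle is the lattice counting for the off-diagonal bilinear Gaussian under the combined momentum and resonance constraints: a mis-count---for instance, $N_1^d$ instead of $N_1^{d-1}$---would eat the entire randomness gain, and the $\alpha<\tfrac{1}{4}$ threshold would no longer be accessible. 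A secondary subtlety is that the $D_3$ factor must be bounded in $X^s$ rather than via a pointwise $L^p$ estimate as for random terms, which is precisely why the final bound takes the form $\dd^c N_1^{-c}\|u_0\|_{Y^{-s}}\|D_3\|_{X^s}$ with no independent $N_2$- or $N_3$-decay outside the $X^s$-norm.
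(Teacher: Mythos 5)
Your overall template (small-interval reduction via Proposition \ref{timelocalTransferPrinciple}, a probabilistic bound plus a complementary H\"older--Strichartz bound, geometric mean to get the $|J|^{1/2}$ factor, then Proposition \ref{Interpolation2} to pass to $Y^{-s}$) is the paper's, but the key quantitative step --- the lattice count in the bilinear Gaussian estimate --- is set up in a way that loses exactly the gain the proposition needs. You freeze the external pair $(n_0,n_3)$ and count the pairs $(n_1,n_2)$ on a hyperplane in $B_{N_1}$, getting $\lesssim N_1^{d-1}$ per fiber. After the Cauchy--Schwarz pairings in $n_0$ and $n_3$, what actually enters is $\bigl(\sum_{n_0,n_3}|S_3(n_0,n_3,\cdot)|\bigr)^{1/2}$, i.e.\ the \emph{total} number of resonant quadruples; bounding it by (number of $(n_0,n_3)$) $\times$ (your per-fiber count) gives $N_1^{2d-1}N_3^{d}$ (or, even granting the sharper per-fiber bound $N_2^{d-1}$, $N_1^{d}N_2^{d-1}N_3^{d}$). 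Dividing by the weights $N_1^{d-1-\alpha}N_2^{d-1-\alpha}$ this yields at best $N_3^{d/2}N_1^{-(s_c-\alpha)}N_2^{-(s_c+\frac12-\alpha)}$: the extra $\frac12$ lands on $N_2$, not on $N_1$. That is fatal in the regime $N_2,N_3\ll N_1\sim N_0$: converting $u_0$ to $Y^{-s}$ costs $N_0^{s}\sim N_1^{s}$ with $s\geq s_c$, and the only $N_1$-decay left is $N_1^{-(s_c-\alpha)}$, so after averaging with the Step-2 bound the summand grows like $N_1^{\,s-s_c+\alpha}\geq N_1^{\alpha}$ --- there is no $N_1^{-c}$ and no access to $\alpha<\frac14$ (indeed not even to $\alpha=0$, $s=s_c$). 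The paper's count instead freezes the two \emph{low} frequencies $(n_2,n_3)$ and counts the high pair $(n_0,n_1)$, whose sum (or difference) and resonance are then fixed, giving a sphere/hyperplane bound $N_1^{d-1+\epsilon}$ by Lemma \ref{Counting1}, hence total $\lesssim N_1^{d-1+\epsilon}N_2^{d}N_3^{d}$ and the bound $N_3^{d/2}N_1^{-(s_c+\frac12-\alpha)+\epsilon}N_2^{-(s_c-\alpha)}$; it is precisely this $N_1^{-1/2}$, halved by the averaging, that produces the threshold $s<s_c+\frac14-\alpha$. So your remark that only a miscount of the form ``$N_1^{d}$ instead of $N_1^{d-1}$'' would destroy the argument is not right: your own count, as aggregated, already does.

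A secondary gap: inside the resonance analysis you keep $D_3$ as a general $X^s$ function and propose to ``pair it via Cauchy--Schwarz in $n_3$ against its $X^s$-norm,'' but $\widehat{D_3}(n_3,t)$ is time-dependent, so the expansion of $\mathds{1}_J$ into time frequencies and the constraint $-|n_1|^2+|n_2|^2+|n_3|^2=|n_0|^2+k$ do not apply to it as written. The paper avoids this by applying Proposition \ref{timelocalTransferPrinciple} \emph{bilinearly}, in $u_0$ and $D_3$ simultaneously, so that on each subinterval $D_3$ is replaced by a free evolution $P_{N_3}e^{it\Delta}\phi^{(3)}$ with constant coefficients $a^{(3)}_{n_3}$; the $X^s$-norm of $D_3$ only appears at the end through the $U^2$-to-$V^2$ interpolation (Proposition \ref{Interpolation2}), which in turn requires the separate $U^{p_c^+}$-based Strichartz bound over all of $[0,\delta]$ (the paper's Step 4), not just the small-interval H\"older bound you describe. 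Finally, Lemma \ref{LargeDeviation} is stated for linear Gaussian sums; the off-diagonal sum $\sum c_{n_1,n_2}\overline{g_{n_1}}g_{n_2}$ needs an order-two chaos deviation estimate in the spirit of Lemma \ref{MLD} (the diagonal $n_1=n_2$ piece being split off as you do, via Lemma \ref{Neps}).
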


\begin{proof}
Let's suppose that $\widetilde{R}_1=\overline{R_1}$, $\widetilde{R}_2={R_2}$ and $\widetilde{D}_3={D_3}$, and the other cases are similar.

Define $S_3(n, n_3, m) := \{(n_1, n_2) \in \ZZZ^d\times\ZZZ^d : -n_1+n_2+n_3 = n,\  -|n_1|^2+|n_2|^2+|n_3|^2 = m,\  n_1\neq n_2,\ n_3,\ \text{and }n_i \sim N_i\}$.
Then we have
\begin{align*}
&\N(\overline R_1, R_2, D_3) := {\J}_1 + {\J}_2\\
=&\sum_{n\in \ZZZ^d, m\in \ZZZ} e^{in\cdot x+itm}\sum_{S(n,m)}\frac{\overline{g_{n_1}(\w)}}{\langle n_1\rangle^{d-1-\alpha}}\frac{g_{n_2}(\w)}{\langle n_2\rangle^{d-1-\alpha}}\widehat{D_3}(n_3)\\
+& \sum_{n\in \ZZZ^d, n\sim N_i,\ i=1,2,3} \frac{|g_n(\w)|^2\widehat{D_3}(n_3)}{\langle n\rangle^{2d-2-2\alpha}} e^{in\cdot x+it|n|^2}
\end{align*}

\noindent\textbf{Step 1 a)} 
{First}, let's consider $\J_1$ term.
By Prop \ref{timelocalTransferPrinciple}, to estimate $|\int_0^\dd\int_{\mathbb{T}^d} \overline u_0\J_1 (\overline R_1, R_2, D_3) dxdt|$, we can first consider $u_0$ as a linear solution $\mathds{1}_J e^{it\Delta} \phi$ in any small interval $J\subset [0, \delta]$ and get the bound of $\left|\int_{J\times\TTT^d} \overline{P_{N_0} e^{it\Delta}\phi} \J_1(\overline R_1, R_2, P_{N_3} e^{it\Delta}\phi^{(3)}) \,dxdt\right|$. Suppose $\phi (x) = \sum_{n\in \ZZZ^d} a_ne^{in\cdot x}$, $\phi^{(3)} (x) = \sum_{n\in \ZZZ^d} a^{(3)}_n e^{in\cdot x}$ and $\mathds{1}_J (t) = \sum_{k\in \ZZZ} b_k e^{ikt}$.

\begin{align*}
&\left|\int_{J\times\TTT^d} \overline{P_{N_0} e^{it\Delta}\phi} \J_1(\overline R_1, R_2, P_{N_3} e^{it\Delta}\phi^{(3)}) \,dxdt\right|\\
=& \left|\sum_{n\in \ZZZ^d,\, |n|\sim N_0\atop{k\in \ZZZ,\, |k|\lesssim N_1^2}}\sum_{S(n, |n|^2+k)} b_k \overline{a_n} \frac{\overline{g_{n_1}(\w)}}{\langle n_1\rangle^{d-1-\alpha}}\frac{g_{n_2}(\w)}{\langle n_2\rangle^{d-1-\alpha}}a_{n_3}^{(3)}\right| 
\end{align*}
then by Lemma \ref{lem:boundFourier}, we have that $\sum_{|k|\lesssim N^2_1} |b_k|\lesssim \log N_1$. So

\begin{align*}
& \left|\sum_{n\in \ZZZ^d,\, |n|\sim N_0\atop{k\in \ZZZ,\, |k|\lesssim N_1^2}}\sum_{S(n, |n|^2+k)} b_k \overline{a_n} \frac{\overline{g_{n_1}(\w)}}{\langle n_1\rangle^{d-1-\alpha}}\frac{g_{n_2}(\w)}{\langle n_2\rangle^{d-1-\alpha}}a_{n_3}^{(3)}\right|\\
\leq & \|P_{N_0} \phi\|_{L^2_x} \|P_{N_3}\phi^{(3)}\|_{L^2_x}  \sum_{|k|\lesssim N^2_1} |b_k| \left(\sum_{n\in \ZZZ^d, |n|\sim N_0\atop n_3 \in \ZZZ^d, |n_3|\sim N_3} \left|\sum_{S_3(n, n_3, |n|^2+k)} \frac{\overline{g_{n_1}(\w)}}{\langle n_1\rangle^{d-1-\alpha}}\frac{g_{n_2}(\w)}{\langle n_2\rangle^{d-1-\alpha}} \right|^2\right)^{\frac{1}{2}}
\end{align*}

By Lemma \ref{MLD}, after choosing a subset $\Omega^1_\dd$ with $\PPP(\Omega^1_\dd)\lesssim e^{-\frac{1}{\dd^2}}$, and by Lattice counting lemma (Lemma \ref{Counting1}),
we obtain that
\begin{align*}
&  \|P_{N_0} \phi\|_{L^2_x} \|P_{N_3}\phi^{(3)}\|_{L^2_x}  \sum_{|k|\lesssim N^2_1} |b_k| \left(\sum_{n\in \ZZZ^d, |n|\sim N_0\atop n_3 in \ZZZ^d, |n_3|\sim N_3} \left|\sum_{S_3(n, n_3, |n|^2+k)} \frac{\overline{g_{n_1}(\w)}}{\langle n_1\rangle^{d-1-\alpha}}\frac{g_{n_2}(\w)}{\langle n_2\rangle^{d-1-\alpha}} \right|^2\right)^{\frac{1}{2}}\\
\lesssim &\|P_{N_0} \phi\|_{L^2_x} \|P_{N_3}\phi^{(3)}\|_{L^2_x} \sum_{|k|\lesssim N^2_1} |b_k|  \frac{1}{N_1^{d-1-\alpha} N_2^{d-1-\alpha}} \\
&\times\left|\{(n, n_1, n_2, n_3)\in \ZZZ^d\times\ZZZ^d\times\ZZZ^d\times\ZZZ^d: (n_1, n_2, n_3)\in S(n, |n|^2+k)\}\right|^{\frac{1}{2}}\\
\leq& \frac{N_1^\epsilon N_3^{\frac{d}{2}}}{N_1^{s_c+\frac{1}{2}-\alpha}N_2^{s_c-\alpha}} \|P_{N_0} \phi\|_{L^2_x}\|P_{N_3}\phi^{(3)}\|_{L^2_x}.
\end{align*}

\noindent\textbf{Step 1 b)} {Second},  consider $\J_2$.

\begin{align}
\label{eq:Bb1}
 &\left|\int_{J\times\TTT^d} \overline{P_{N_0} e^{it\Delta}\phi} \J_2(\overline R_1, R_2, P_{N_3} e^{it\Delta}\phi^{(3)}) \,dxdt\right|\\\label{eq:Bb2}
\leq & \left|\sum_{n\in \ZZZ^d,\, |n|\sim N_0\atop{k\in \ZZZ,\, |k|\lesssim N_1^2}} b_k \overline{a_n} \frac{\overline{g_{n}(\w)}}{\langle n\rangle^{d-1-\alpha}}\frac{g_{n}(\w)}{\langle n\rangle^{d-1-\alpha}}a_{n}^{(3)}\right|
\end{align}
By Lemma \ref{Neps}, there exists a set $\Omega^2_\dd$ with $\PPP({\Omega^2_\dd}^c)<e^{-1/\dd^{\epsilon}}$, for all $\w\in \Omega^2_\dd$, we have $|g_n(\w)|\lesssim\frac{\log (\langle n\rangle +1)}{\delta^\epsilon}$.
\begin{align}
(\ref{eq:Bb1})&\leq \frac{N_1^{2\epsilon}}{N_1^{2d-2-2\alpha}} \left|\sum_{n\in \ZZZ^d,\, |n|\sim N_0\atop{k\in \ZZZ,\, |k|\lesssim N_1^2}} b_k \overline{a_n} a_{n}^{(3)}\right|\\
&\lesssim  \frac{N_1^{2\epsilon}{ \log (N_1)}}{N_1^{2d-2-2\alpha}} \|P_{N_0} \phi\|_{L^2_x}\|P_{N_3}\phi^{(3)}\|_{L^2_x}
 \end{align}

 If we choose $\Omega_\dd = \Omega_\dd^1 \cap \Omega_\dd^2$, then we obtain that
 \begin{align*}
 &\left|\int_{J\times\TTT^d} \overline{P_{N_0} e^{it\Delta}\phi} \mathcal{N}(\overline R_1, R_2, P_{N_3} e^{it\Delta}\phi^{(3)}) \,dxdt\right|\\
 \lesssim& \frac{N_1^\epsilon N_3^{\frac{d}{2}}}{N_1^{s_c + \frac{1}{2}-\alpha}N_2^{s_c-\alpha}} \|P_{N_0}\phi\|_{L^2_x} \|P_{N_3} \phi^{(3)}\|_{L^2_x},
 \end{align*}
\noindent\textbf{Step 2} If we set $2^+$, $\infty^-$, $1^+$ and $q$ satisfying $\frac{1}{2^+} = \frac{1}{2}-\epsilon$, $\frac{2}{\infty^-} + \frac{1}{2^+} = \frac{1}{2}$, $\frac{1}{1^+} + \frac{2}{\infty^-} = 1$, and $\frac{2}{q} + \frac{d}{2^+} = \frac{d}{2}$.
By H\"older inequality and Lemma \ref{LpLarge} after excluding a subset of probability $e^{-\frac{1}{\dd^c}}$, we have 
\begin{align*}
 &\left|\int_{J\times\TTT^d} \overline{P_{N_0} e^{it\Delta}\phi} \mathcal{N}_1(\overline R_1, R_2, P_{N_3} e^{it\Delta}\phi^{(3)}) \,dxdt\right|\\
 =&\left|\int_{J\times\TTT^d} \overline{P_{N_0} e^{it\Delta}\phi}\ \overline R_1 R_2 P_{N_3} e^{it\Delta}\phi^{(3)} \,dxdt\right|\\
 \leq & \|P_{N_0}e^{it\Delta} \phi\|_{L^{\infty}_{t}L^{2}_x(J\times \TTT^d)}\|R_1\|_{L^{\infty^-}_{t,x}}\|R_2\|_{L^{\infty^-}_{t,x}}\|P_{N_3} e^{it\Delta}\phi^{(3)}\|_{L^{1^+}_{t}L^{2^+}_x}\\
 \lesssim & |J|^{1-\epsilon} \frac{N_3^\epsilon}{N_1^{s_c-\alpha}N_2^{s_c-\alpha}} \|P_{N_0} \phi\|_{L^2_x}\|P_{N_3} \phi^{(3)}\|_{L^2_x}.
\end{align*}
And also we have
\begin{align*}
&\left|\int_{J\times\TTT^d} \overline{P_{N_0} e^{it\Delta}\phi} \mathcal{N}_2(\overline R_1, R_2, P_{N_3} e^{it\Delta}\phi^{(3)}) \,dxdt\right|\\
\lesssim & \left|\int_{J}\left(\int_{\TTT^d} \overline{P_{N_0} e^{it\Delta}\phi}\ \overline{R_1}\,dx\right) \left(\int_{\TTT^d} R_2 P_{N_3} e^{it\Delta}\phi^{(3)}\,dx\right)dt\right|\\
 \leq & \|P_{N_0}e^{it\Delta} \phi\|_{L^{\infty}_{t}L^{2}_x(J\times \TTT^d)}\|R_1\|_{L^{\infty^-}_{t}L^{2}_x}\|R_2\|_{L^{\infty^-}_{t}L^2_x}\|P_{N_3} e^{it\Delta}\phi^{(3)}\|_{L^{1^+}_{t}L^{2}_x}\\
 \lesssim & |J|^{1-\epsilon} \frac{1}{N_1^{s_c-\alpha}N_2^{s_c-\alpha}} \|P_{N_0} \phi\|_{L^2_x}\|P_{N_3} \phi^{(3)}\|_{L^2_x}.
\end{align*}

So we obtain that 
\[
\left|\int_{J\times\TTT^d} \overline{P_{N_0} e^{it\Delta}\phi} \mathcal{N}(\overline R_1, R_2, P_{N_3} e^{it\Delta}\phi^{(3)}) \,dxdt\right|
\lesssim |J|^{1-\epsilon} \frac{N_3^\epsilon}{N_1^{s_c-\alpha}N_2^{s_c-\alpha}} \|P_{N_0} \phi\|_{L^2_x}\|P_{N_3} \phi^{(3)}\|_{L^2_x}.
\]
\noindent\textbf{Step 3}
Average the estimates in Step 1 and Step 2, we obtain that
\begin{align}
&\left|\int_{J\times\TTT^d} \overline{P_{N_0} e^{it\Delta}\phi} \mathcal{N}(\overline R_1, R_2, P_{N_3} e^{it\Delta}\phi^{(3)}) \,dxdt\right|\\
 \lesssim & |J|^{\frac{1}{2}} \frac{N_1^{\epsilon}N_3^{\frac{d}{4}}}{N_1^{s_c + \frac{1}{4}-\alpha}N_2^{s_c-\alpha}} \|P_{N_0} \phi\|_{L^2_x}\|P_{N_3} \phi^{(3)}\|_{L^2_x}.\label{eq:LocalTimeEstimate2}
\end{align}

By the estimate (\ref{eq:LocalTimeEstimate2}) in Step 3 and Lemma \ref{timelocalTransferPrinciple}, we hold that
\[
\left|\int_0^\dd\int_{\mathbb{T}^d} \mathcal{N}(\overline{R}_{1},  R_2, D_3) \overline u_0 dxdt\right| \lesssim \dd^{\frac{1}{2}} \frac{N_1^{\epsilon}N_3^{\frac{d}{4}}}{N_1^{s_c + \frac{1}{4}-\alpha}N_2^{s_c-\alpha}} \|u_0\|_{U_\Delta^2}\|D_3\|_{U_\Delta^2}
\]
\noindent \textbf{Step 4}
Set $p_c^+$, $q$ as (\ref{coef:pc+}) and $\frac{1}{\infty^-} = {\epsilon}$
Replacing $\overline{P_{N_0} e^{it\Delta}\phi}$ by $\overline{u_0}$, following the similar idea, and using Strichartz estimate (\ref{eq:Strichartz2}), we have 
\begin{align*}
&\left|\int_{[0,\dd]\times\TTT^d} \overline{u_0} \mathcal{N}(\overline R_1, R_2, D_3) \,dxdt\right|
\lesssim  \|u_0\|_{L^{p_c^+}_{t,x}} \|R_1\|_{L^{q}_{t,x}} \|R_2\|_{L^q_{t,x}} \|D_3\|_{L^{p_c^+}_{t,x}}\\
\lesssim & \dd^{\frac{4}{2(d+2)}-\epsilon} \|u_0\|_{L^{p_c^+}_{t,x}} \|R_1\|_{L^{\infty^-}_{t,x}} \|R_2\|_{L^{\infty-}_{t,x}} \|D_3\|_{L^{p_c^+}}\\
\lesssim& \dd^{\frac{4}{2(d+2)}-\epsilon} \frac{1}{N_1^{s_c-\alpha-\epsilon}N_2^{s_c-\alpha}} \|u_0\|_{U^{p_c^+}_\Delta}\|D_3\|_{U^{p_c^+}_\Delta}.\\
\end{align*}

By Lemma \ref{Interpolation2} and the embedding property (\ref{eq:Embedding}), we obtain that
\[
\left|\int_0^\dd\int_{\mathbb{T}^d} \mathcal{N}(\overline{R}_{1},  R_2, D_3) \overline u_0 dxdt\right| \lesssim \dd^{\frac{1}{2}} \frac{N_1^{\epsilon}N_3^{\frac{d}{4}}}{N_1^{s_c + \frac{1}{4}-\alpha -s}N_2^{s_c-\alpha} N_3^{s}} \|u_0\|_{Y^{-s}} \|D_3\|_{X^s}
\]
Since $s<s_c + \frac{1}{4} -\alpha$, we hold the proposition.\end{proof}
\subsection*{\textbf{Case B (c)}}
We consider the all case $(u_1, u_2, u_3)=(R_1, D_2, R_3)$. By the similar approach with \textbf{Case B (b)}, we can hold following property:
\begin{prop}
Assume $N_i$, $i=0,1,2,3$, are dyadic numbers and $N_1\geq N_2 \geq N_3$, and $0 
\leq\dd \leq 1$. For $s_c\leq s <s_c + \frac{1}{6} -\alpha$ and $0\leq \alpha < \frac{1}{6}$, there exist $c, r>0$ and subset $\Omega_{\delta}\subset\Omega$ with $\PPP(\Omega^c_\dd)\leq e^{-1/\dd^r}$,  so that for all $\w\in \Omega_{\dd}$ and all $N_1\geq N_2 \geq N_3$, we can bound the integral:
$$\left|\int_0^\dd\int_{\mathbb{T}^4} \mathcal{N}(\widetilde{R}_{1}, \widetilde D_2, \widetilde R_3) \overline u_0 dxdt\right| \lesssim \dd^c (\frac{1}{N_1})^c \|{u}_0\|_{Y^{-s}} \|D_2\|_{X^{s}},$$
where $\widetilde{R}_{1}$, $\widetilde D_2$, and $\widetilde R_3$ is defined as (\ref{def:DR}) and only one of $\{\widetilde{R}_{1}, \widetilde D_2, \widetilde R_3\}$ can be the conjugate.
\end{prop}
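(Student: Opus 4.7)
The plan is to mirror the four-step argument of Case B(b), with the roles of the indices $n_2$ and $n_3$ interchanged to reflect that the deterministic factor $D_2$ now sits at the middle frequency rather than at the bottom. I write $\mathcal{N}(\overline{R_1}, D_2, R_3) = \mathcal{J}_1 + \mathcal{J}_2$, where $\mathcal{J}_1$ is the generic resonant sum over
\[
S_2(n, n_2, m) := \{(n_1, n_3) \in \mathbb{Z}^d \times \mathbb{Z}^d : -n_1 + n_2 + n_3 = n,\ -|n_1|^2 + |n_2|^2 + |n_3|^2 = m,\ n_1 \neq n_2, n_1\neq n_3,\ |n_i| \sim N_i\},
\]
and $\mathcal{J}_2$ is the diagonal piece coming from $n_1 = n_3$ (where the Gaussians collapse into $|g_{n_1}|^2$). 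As in the B(b) proof, I may assume $N_0 \sim N_1 \geq N_2 \geq N_3$, since the case $N_0 < N_1 \sim N_2$ reduces to a Case A estimate in which $R_1$ behaves like a deterministic factor.

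For Step 1(a), I apply Proposition \ref{timelocalTransferPrinciple} to reduce to testing against a linear evolution $u_0 = \mathds{1}_J \, e^{it\Delta}\phi$ on a subinterval $J \subset [0,\delta]$, and simultaneously write $D_2 = P_{N_2} e^{it\Delta}\phi^{(2)}$ as a free solution. Expanding in Fourier and using Lemma \ref{lem:boundFourier} to sum the coefficients $b_k$ of $\mathds{1}_J$ (producing only a $\log N_1$ loss), Cauchy–Schwarz in the pair $(n, n_2)$ separates out $\|P_{N_0}\phi\|_{L^2_x} \|P_{N_2}\phi^{(2)}\|_{L^2_x}$ from the random bilinear sum
\[
F_{n, n_2}(\omega) := \sum_{(n_1,n_3)\in S_2(n, n_2, |n|^2+k)} \frac{\overline{g_{n_1}(\omega)}\,g_{n_3}(\omega)}{\langle n_1\rangle^{d-1-\alpha}\langle n_3\rangle^{d-1-\alpha}}.
\]
The multilinear large-deviation estimate (Lemma \ref{MLD}), combined with the hyperplane counting bound $|S_2(n,n_2,m)| \lesssim N_1^{d-1}$ (obtained by noting that $n_3 = n_1 + (n-n_2)$ and the resonance constraint pins $n_1$ to a codimension-one affine slice, cf.\ Lemma \ref{Counting1}), yields after excluding a set $\Omega_\delta^1$ with $\mathbb{P}((\Omega_\delta^1)^c) \lesssim e^{-1/\delta^2}$ a pointwise estimate of shape $\frac{N_1^{\epsilon} N_2^{d/2}}{N_1^{s_c+\frac12-\alpha} N_3^{s_c-\alpha}}$ (times the two $L^2$ norms). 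For Step 1(b), the diagonal $\mathcal{J}_2$ is handled by Lemma \ref{Neps}, losing only a logarithm.

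For Step 2, I produce an independent Hölder bound using the random $L^{q}$ control from Corollary \ref{CorLp}: placing $u_0$ and the free version of $D_2$ in $L^{p_c^+}_{t,x}$ (or $L^{1^+}_t L^{2^+}_x$) and $R_1, R_3$ in $L^{\infty^-}_{t,x}$, combined with Strichartz (Proposition \ref{Strichartz}), yields a bound of the form $|J|^{1-\epsilon} \frac{N_2^\epsilon}{N_1^{s_c-\alpha}N_3^{s_c-\alpha}}$ times $\|P_{N_0}\phi\|_{L^2_x}\|P_{N_2}\phi^{(2)}\|_{L^2_x}$. Step 3 takes the geometric mean of the Step 1 and Step 2 bounds to obtain a local-in-time estimate with $|J|^{1/2}$, which via Proposition \ref{timelocalTransferPrinciple} upgrades to a $U_\Delta^2 \times U_\Delta^2$ estimate for $(u_0, D_2)$. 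Step 4 then interpolates this against the purely Strichartz-based $U^{p_c^+} \times U^{p_c^+}$ bound via Proposition \ref{Interpolation2} and the embeddings (\ref{eq:Embedding}), landing in $Y^{-s} \times X^s$.

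The main obstacle, and the source of the tighter threshold $s - s_c < \frac{1}{6} - \alpha$ (versus $\frac{1}{4} - \alpha$ in B(b)), is the interpolation bookkeeping in Step 4: the Cauchy–Schwarz/counting step in Step 1 forces a factor $N_2^{d/2}$, and after the geometric mean this contributes $N_2^{d/4}$; to absorb this into $\|D_2\|_{X^s} \sim N_2^s$ while still preserving a positive power of $N_1$ to sum the dyadic frequencies requires balancing against the gain $N_1^{-(s_c+\frac14-\alpha-s)}$ at the top frequency. Because $N_2$ is the intermediate, not the smallest, scale here, the remaining slack is half as large as in Case B(b), which is precisely what produces the constraint $\alpha < \frac{1}{6}$ and the shrunken admissible range for $s$. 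Once this balance is carried out, choosing a subset $\Omega_\delta$ as the intersection of the exceptional sets from Lemmata \ref{Neps}, \ref{MLD}, and Corollary \ref{CorLp} gives $\mathbb{P}(\Omega_\delta^c) \leq e^{-1/\delta^r}$ and produces the claimed bound $\delta^c (1/N_1)^c \|u_0\|_{Y^{-s}} \|D_2\|_{X^s}$.
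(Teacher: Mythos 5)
Your Steps 1 and 2 are set up exactly as in the paper's Case B(b) template and produce the right ingredient bounds: the counting/large-deviation step gives (up to $N_1^{\epsilon}$ and logs) $\frac{N_2^{d/2}}{N_1^{s_c+\frac12-\alpha}N_3^{s_c-\alpha}}$, and the H\"older/random-$L^{\infty-}$ step gives $|J|^{1-\epsilon}\frac{N_2^{\epsilon}}{N_1^{s_c-\alpha}N_3^{s_c-\alpha}}$. The gap is in how you extract the threshold. You take the equal-weight geometric mean in Step 3 and then claim the $\frac16$ emerges from "interpolation bookkeeping" in Step 4 by trading $N_2^{d/4}$ against the gain $N_1^{-(s_c+\frac14-\alpha-s)}$. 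For $d=3$ this trade does not close: the equal-weight bound is of the form $\delta^{1/2}\,N_1^{\epsilon}\,N_2^{\frac34-s}\,N_3^{-(s_c-\alpha)}\,N_1^{-(s_c+\frac14-\alpha-s)}$, and since the admissible range forces $s<\frac34$, summing over $N_2\le N_1$ produces $N_1^{\frac34-s}$, so the total $N_1$ exponent is $\epsilon+\frac34-\frac14-s_c+\alpha=\epsilon+\alpha\ge 0$ for \emph{every} $s\in[s_c,s_c+\frac16-\alpha)$; the dyadic sum over $N_1\sim N_0$ diverges. Step 4 cannot rescue this: Proposition \ref{Interpolation2} only upgrades $U^2_\Delta$ to $V^2$ at the cost of a logarithm and does not change any frequency exponents. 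The structural point you gloss over is that, unlike B(b), the unfavorable power now sits at the \emph{intermediate} frequency $N_2$, while the only genuine decay besides $N_1$ sits at the \emph{lowest} frequency $N_3\le N_2$ and hence cannot absorb it (in B(b) the $N_3^{d/4-s}$ growth was absorbed by $N_2^{-(s_c-\alpha)}$, which is why B(b) keeps the $\frac14$ threshold even in $d=3$). Your heuristic "the remaining slack is half as large as in B(b)" is neither the actual mechanism nor numerically consistent with $\frac16$.

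The missing idea — which is essentially the entire content of the paper's proof of this case — is to average Steps 1 and 2 with \emph{unequal} weights when $d=3$: taking weight $\frac13$ on the counting bound and $\frac23$ on the H\"older bound turns $N_2^{d/2}$ into $N_2^{d/6}=N_2^{1/2}$, which is absorbed by $N_2^{-s}$ precisely because $s\ge s_c=\frac12$, while the $N_1$ gain drops from $s_c+\frac14-\alpha$ to $s_c+\frac16-\alpha$; this is exactly where the constraint $s<s_c+\frac16-\alpha$ (and $\alpha<\frac16$, needed for the range to be nonempty) comes from, and it still leaves a positive power of $|J|$ for the transfer principle (Proposition \ref{timelocalTransferPrinciple}). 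For $d\ge 4$ the equal-weight average is fine, since $d/4\le s_c\le s$, and there the argument even tolerates $s<s_c+\frac14-\alpha$, as in the paper. So your write-up would go through after replacing the Step 3/Step 4 discussion with this re-weighted average in the $d=3$ case; as stated, the $d=3$ case — the one that actually dictates the $\frac16$ in the proposition — is not proved.
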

\begin{proof}
For $d\geq 4$.

Following the \textbf{Case B (b)}, by choosing a subset $\Omega_\dd\subset \Omega$ with $\PPP(\Omega_\dd^c)<e^{-1/\dd^r}$, for $\w\in\Omega_\dd$, we have similar estimate:
\[\left|\int_0^\dd\int_{\mathbb{T}^d} \N(\widetilde R_1, \widetilde D_2, \widetilde R_3) \overline u_0 dxdt\right|\lesssim \dd^{\frac{1}{2}} \frac{N_1^{\epsilon}N_2^{\frac{d}{4}}}{N_1^{s_c + \frac{1}{4}-\alpha -s}N_3^{s_c-\alpha} N_2^{s}} \|u_0\|_{Y^{-s}} \|D_2\|_{X^s},
\]
since $s<s_c + \frac{1}{4} -\alpha$, the proposition holds.

For $d = 3$.
Following the \textbf{Case B (b)},
in {Step 3}, we average the estimates in Step 1 and Step 2 with different weights, we have that
\[
\left|\int_0^\dd\int_{\mathbb{T}^3} \N(\widetilde R_1, \widetilde D_2, \widetilde R_3) \overline u_0 dxdt\right|\lesssim \dd^{\frac{1}{2}} \frac{N_1^{\epsilon}N_2^{\frac{1}{2}}}{N_1^{s_c + \frac{1}{6}-\alpha -s}N_3^{s_c-\alpha} N_2^{s}} \|u_0\|_{Y^{-s}} \|D_2\|_{X^s},
\]

since $s_c\leq s<s_c + \frac{1}{6} -\alpha$, and $s_c = \frac{1}{2}$, the proposition holds.

\end{proof}
\subsection*{\textbf{Case B (d)}}
We consider the case $(u_1, u_2, u_3)=(R_1, D_2, D_3)$.
\begin{prop}
Assume $N_i$, $i=0,1,2,3$, are dyadic numbers and $N_1\geq N_2 \geq N_3$, and $0 
\leq\dd \leq 1$. For $d\geq 3$,  \[s_r(d) = \begin{cases}
\frac{1}{7}& d= 3\\
\frac{4}{19}& d= 4\\
\frac{1}{4} & d\geq 5.
\end{cases}\]
For $s_c \leq s <s_c + s_r(d)-\alpha$ and $0\leq \alpha < s_r (d)$, there exist $c, r>0$ and subset $\Omega_{\delta}\subset\Omega$ with $\PPP(\Omega^c_\dd)\leq e^{-1/\dd^r}$,  so that for all $\w\in \Omega_{\dd}$ and all $N_1\geq N_2 \geq N_3$, we can bound the integral:
$$\left|\int_0^\dd\int_{\mathbb{T}^d} \mathcal{N}(\widetilde{R}_{1}, \widetilde D_2, \widetilde D_3) \overline u_0 dxdt\right| \lesssim \dd^c (\frac{1}{N_1})^c \|{u}_0\|_{Y^{-s}} \|D_2\|_{X^{s}}\|D_3\|_{X^{s}},$$
where $\widetilde{R}_{1}$, $\widetilde D_2$, and $\widetilde D_3$ is defined as (\ref{def:DR}) and only one of $\{\widetilde{R}_{1}, \widetilde D_2, \widetilde D_3\}$ can be the conjugate.
\end{prop}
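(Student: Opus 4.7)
The argument parallels Cases B(b) and B(c), adapted to the situation of one random factor $\widetilde R_1$ at the top frequency and two deterministic factors $\widetilde D_2, \widetilde D_3$ at the lower frequencies. By the symmetry of the three positions of $\mathcal N$, it is enough to treat $\mathcal N(\overline R_1, D_2, D_3) = \rho(\overline R_1 D_2 D_3 - 2\overline R_1 \int_{\TTT^d} D_2 D_3\,dx) =: \J_1 + \J_2$, where $\J_1$ carries the off-diagonal modes indexed by $S(n,m) = \{(n_1,n_2,n_3):-n_1+n_2+n_3 = n,\ -|n_1|^2+|n_2|^2+|n_3|^2 = m,\ n_1\neq n_2, n_3\}$ and $\J_2$ collects the diagonal remainder. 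The proof then follows the usual four-step template.

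\textbf{Step 1 (probabilistic).} By Proposition~\ref{timelocalTransferPrinciple}, it suffices to bound, for each $J\subset[0,\delta]$,
\[
I_J \;=\; \left|\int_{J\times\TTT^d}\overline{P_{N_0} e^{it\Delta}\phi}\;\mathcal N(\overline R_1, P_{N_2} e^{it\Delta}\phi^{(2)}, P_{N_3} e^{it\Delta}\phi^{(3)})\,dxdt\right|.
\]
Fourier-expand $\mathds 1_J=\sum_k b_k e^{ikt}$ with $\sum_{|k|\lesssim N_1^2}|b_k|\lesssim \log N_1$ via Lemma~\ref{lem:boundFourier}. Because only $R_1$ is random, the resulting multilinear sum is \emph{linear} in $\{g_{n_1}\}$; one can either apply Lemma~\ref{Neps} pointwise or invoke Lemma~\ref{LargeDeviation}, in either case obtaining--off a set $\Omega_\delta^1$ with $\PPP((\Omega_\delta^1)^c)\leq e^{-1/\delta^{r}}$--the replacement $|g_{n_1}|\lesssim \delta^{-\epsilon}\log\langle n_1\rangle$. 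A Cauchy--Schwartz in the remaining indices together with the counting Lemmas~\ref{Counting1}--\ref{Counting2} (fixing $n_2$ and estimating the admissible $(n_1,n_3)$ on the quadric $\langle n_2-n_1,\,n_2-n_3\rangle = -k/2$ inherited from the momentum-plus-energy constraint) yields
\[
I_{J,\J_1} \;\lesssim\; \delta^{-\epsilon}\,\frac{N_1^\epsilon\,N_2^{a(d)}\,N_3^{b(d)}}{N_1^{s_c+\frac12-\alpha}}\,\|P_{N_0}\phi\|_{L^2_x}\|P_{N_2}\phi^{(2)}\|_{L^2_x}\|P_{N_3}\phi^{(3)}\|_{L^2_x},
\]
with exponents $a(d),b(d)$ dictated by the dimension-dependent sphere/plane counts of Lemma~\ref{lem:BourgainCounting}. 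The diagonal piece $\J_2$ collapses to a sum of the form $\sum_n \frac{|g_n|^2\widehat{D_2}(\cdot)\widehat{D_3}(\cdot)}{\langle n\rangle^{2(d-1-\alpha)}}e^{\cdots}$ and is controlled directly by the pointwise bound on $|g_n|$ from Lemma~\ref{Neps}.

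\textbf{Step 2 (deterministic).} Using Strichartz (Proposition~\ref{Strichartz}) together with the $L^{\infty^-}_{t,x}$-bound on $R_1$ from Corollary~\ref{CorLp} and H\"older at the exponents $p_c^+,\,\infty^-$ of~(\ref{coef:pc+}), one obtains
\[
I_J \;\lesssim\; \delta^{\frac{d+4}{2(d+2)}-\epsilon}\,\frac{1}{N_1^{s_c-\alpha}}\,\|P_{N_0}\phi\|_{L^2_x}\|P_{N_2}\phi^{(2)}\|_{L^2_x}\|P_{N_3}\phi^{(3)}\|_{L^2_x}.
\]

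\textbf{Steps 3--4 (interpolation and upgrade).} Averaging the Step~1 and Step~2 estimates with the optimal geometric-mean weight produces
\[
I_J \;\lesssim\; |J|^{1/2}\,\frac{N_1^\epsilon\,N_2^{\nu_2(d)}\,N_3^{\nu_3(d)}}{N_1^{s_c+s_r(d)-\alpha}}\,\|P_{N_0}\phi\|_{L^2_x}\|P_{N_2}\phi^{(2)}\|_{L^2_x}\|P_{N_3}\phi^{(3)}\|_{L^2_x},
\]
where the choice of weight--together with the counting losses from Step~1--is precisely what forces the announced value of $s_r(d)$. Proposition~\ref{timelocalTransferPrinciple} then promotes this free-solution bound to a $U^p_\Delta$-estimate, and Proposition~\ref{Interpolation2} together with the embedding chain~(\ref{eq:Embedding}) upgrades it to the $V^2_\Delta$-level (hence to $Y^{-s}$ and $X^s$). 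The $N_2^{\nu_2},N_3^{\nu_3}$ losses are absorbed by the $N_i^s$ gains coming from $\|D_i\|_{X^s}$, using $N_3\leq N_2\leq N_1$, and the strict inequality $s<s_c+s_r(d)-\alpha$ furnishes a negative net power of $N_1$, which closes the dyadic summation.

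\textbf{Principal obstacle.} The delicate point is the sharpness of the lattice count in Step~1, which is dimension-sensitive: in $d\geq 5$ the estimate $|\ZZZ^d\cap S_R|\lesssim R^{d-2}$ suffices, whereas in $d=4$ the spherical count degrades to $R^{2+\epsilon}$ and in $d=3$ one relies on $R^{1+\epsilon}$ together with the planar count $r^{d-1}$. Propagating these losses through the interpolation in Steps~3--4 yields precisely the thresholds $s_r(3)=\tfrac17$, $s_r(4)=\tfrac{4}{19}$, and $s_r(d)=\tfrac14$ for $d\geq 5$. Once the counting constants are pinned down, the rest of the argument is a routine adaptation of the reasoning already carried out in Cases B(b)--B(c).
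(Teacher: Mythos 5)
There is a genuine gap in your Step 1, and it is precisely the point where Case B(d) differs from Cases B(b)--(c). You propose to handle the single random factor either by the pointwise bound $|g_{n_1}|\lesssim \delta^{-\epsilon}\log\langle n_1\rangle$ (Lemma \ref{Neps}) or by the linear large-deviation estimate (Lemma \ref{LargeDeviation}). Neither works. The pointwise bound discards all cancellation in the $n_1$-sum: after it, $R_1$ is effectively the deterministic super-critical function of Remark 1.1, and Cauchy--Schwartz plus the counting lemmas no longer produce the gain $N_1^{-(s_c+\frac12-\alpha)}$ with only $N_2^{\frac{d-1}{4}}N_3^{\frac d2}$-type losses; the square-root gain over the $n_1$-lattice is exactly what the randomness must supply. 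On the other hand, Lemma \ref{LargeDeviation} applied to $\sum_{n_1}c_{n_1}g_{n_1}(\omega)$ is not admissible here because the coefficients $c_{n_1}$ contain the arbitrary deterministic data $a^{(2)}_{n_2}$ (and $a^{(3)}_{n_3}$): the exceptional set of $\omega$ would then depend on $\phi^{(2)},\phi^{(3)}$, whereas the proposition requires one set $\Omega_\delta$ valid for all $D_2,D_3\in X^s$. The paper's proof resolves this by fixing $n_3$, viewing the $n_1$-sum (with $n_1=n+n_2-n_3$ on the quadric $2\langle n-n_3,n_2-n_3\rangle=k$) as a random matrix $\mathscr{G}_\omega=(\sigma_{n,n_2})$ acting on the vector $(a^{(2)}_{n_2})$, and bounding the operator norm $\|\mathscr{G}_\omega^*\mathscr{G}_\omega\|$ uniformly in the data via Lemma \ref{lem:matrix}: the diagonal term is controlled by Lemma \ref{Neps}, and the off-diagonal term by Lemma \ref{MLD} applied to the bilinear Gaussian sums $\sum_{n_2}g_{n+n_2-n_3}(\omega)\overline{g_{n'+n_2-n_3}}(\omega)$, whose coefficients are data-independent, combined with the lattice counts of Lemmas \ref{Counting1} and \ref{Counting2}. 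This Bourgain-style $\mathscr{G}^*\mathscr{G}$ argument is the heart of Case B(d) and is absent from your proposal; without it the estimate (\ref{eq:BdStep0}) cannot be reached.

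A secondary inaccuracy: you attribute the thresholds $s_r(3)=\frac17$ and $s_r(4)=\frac4{19}$ to dimension-sensitive lattice counting propagated through Steps 3--4. In the paper the Step 1--3 output has the same shape in every dimension, namely the loss $N_2^{\frac{d-1}{8}}N_3^{\frac d2}$ against the gain $N_1^{-(s_c+\frac14-\alpha)}$ at the $U^2_\Delta$ level; for $d=3,4$ this cannot be absorbed at $s$ near $s_c$ by that bound alone, so the paper introduces a separate deterministic-style $U^{p_c^+}_\Delta$ estimate (its Step 4) and averages the two with the explicit weights $(\frac{8d-16}{5d-1},\frac{15-3d}{5d-1})$, which is exactly what produces $\frac17$ and $\frac4{19}$, while for $d\geq5$ the $U^2$ bound alone gives $\frac14$. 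Your scheme never quantifies this final balancing, so even granting Step 1 you have not verified that the dyadic sums close at the claimed values of $s_r(d)$.
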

\begin{proof}
Let's suppose that $\widetilde{R}_1={R_1}$, $\widetilde{D}_2=\overline{D_2}$ and $\widetilde{D}_3={D_3}$, and the other cases are similar.

Define $S_{2, 3}(n, n_2, n_3, m) := \{n_1 \in \ZZZ^d : n_1-n_2+n_3 = n,\  |n_1|^2-|n_2|^2+|n_3|^2 = m,\  n_2\neq n_1,\ n_3,\ \text{and}\ n_1 \sim N_1\}$.
Then we have
\begin{align*}
&\N( R_1, \overline D_2, D_3) := {\J}_1 + {\J}_2\\
=&\sum_{n\in \ZZZ^d, m\in \ZZZ} e^{in\cdot x+itm}\sum_{S(n,m)}\frac{{g_{n_1}(\w)}}{\langle n_1\rangle^{d-1-\alpha}}\widehat{\overline D_2}(n_2)\widehat{D_3}(n_3)\\
+& \sum_{n\in \ZZZ^d, n\sim N_i,\ i=1,2,3} \frac{{g_n(\w)}\widehat{\overline D_2}(n_2)\widehat{D_3}(n_3)}{\langle n\rangle^{d-1-\alpha}} e^{in\cdot x+it|n|^2}
\end{align*}

\noindent\textbf{Step 1 a)} 
{First}, let's consider $\J_1$ term.
By Proposition \ref{timelocalTransferPrinciple}, to estimate $|\int_0^\dd\int_{\mathbb{T}^d} \overline u_0\J_1 ( R_1, \overline D_2, D_3) dxdt|$, we can first consider $u_0$ as a linear solution $\mathds{1}_J e^{it\Delta} \phi$ in any small interval $J\subset [0, \delta]$ and get the bound of $\left|\int_{J\times\TTT^d} \overline{P_{N_0} e^{it\Delta}\phi} \J_1( R_1, \overline{P_{N_2} e^{it\Delta}\phi^{(2)}}, P_{N_3} e^{it\Delta}\phi^{(3)}) \,dxdt\right|$. Suppose $\phi (x) = \sum_{n\in \ZZZ^d} a_ne^{in\cdot x}$, $\phi^{i} (x) = \sum_{n\in \ZZZ^d} a^{(i)}_n e^{in\cdot x}$ for $i = 2, 3$ and $\mathds{1}_J (t) = \sum_{k\in \ZZZ} b_k e^{ikt}$.

\begin{align*}
&\left|\int_{J\times\TTT^d} \overline{P_{N_0} e^{it\Delta}\phi} \J_1( R_1,  \overline{P_{N_2} e^{it\Delta}\phi^{(2)}}, P_{N_3} e^{it\Delta}\phi^{(3)}) \,dxdt\right|\\
=& \left|\sum_{n\in \ZZZ^d,\, |n|\sim N_0\atop{k\in \ZZZ,\, |k|\lesssim N_1^2}}\sum_{S(n, |n|^2+k)} b_k \overline{a_n} \frac{{g_{n_1}(\w)}}{\langle n_1\rangle^{d-1-\alpha}}\overline{a_{n_2}^{(2)}}a_{n_3}^{(3)}\right|\\
\leq & \|P_{N_0} \phi\|_{L^2_x} \|P_{N_3}\phi^{(3)}\|_{L^2_x}  \sum_{|k|\lesssim N^2_1} |b_k| \left(\sum_{n\in \ZZZ^d, |n|\sim N_0\atop n_3\in \ZZZ^d |n_3|\sim N_3} \left|\sum_{S_{3}(n, n_3, |n|^2+k)} \frac{{g_{n_1}(\w)}}{\langle n_1\rangle^{d-1-\alpha}} \overline{a_{n_2}^{(2)}} \right|^2\right)^{\frac{1}{2}}
\end{align*}

Next, fix $k$. Let's focus on
\begin{equation}\label{GoalOfMatrix}
\sum_{n\in \ZZZ^d, |n|\sim N_0\atop n_3\in \ZZZ^d |n_3|\sim N_3} \left|\sum_{S_{3}(n, n_3, |n|^2+k)} \frac{{g_{n_1}(\w)}}{\langle n_1\rangle^{d-1-\alpha}} \overline{a_{n_2}^{(2)}} \right|^2.
\end{equation}

To bound (\ref{GoalOfMatrix}), we use the matrix $\mathscr{G}^* \mathscr{G}$ argument in Bourgain's paper \cite{bourgain1996periodic2d} as follows.

Fix $n_3$ and $|n_3|\sim N_3$. Define 
\[\mathscr{G} = \mathscr{G}_\omega = (\sigma_{n, n_2})_{ |n|<N_0, |n_2|<N_2, \atop n\neq n_3}\]
where
\[
\sigma_{n, n_2} =
\begin{cases}
\frac{1}{N_1^{d-1-\alpha}} {g_{n+n_2-n_3} (\omega)}& \text{if } 2\langle n-n_3, n_2-n_3\rangle = k\\
0& \text{otherwise}
\end{cases}
\]

Then (\ref{GoalOfMatrix}) is bounded by $N_3^{d} \|\mathscr{G}_\omega^* \mathscr{G}_\omega\|^{\frac{1}{2}}$ and by {Lemma} \ref{lem:matrix}, we obtain that
\begin{equation}\label{eq:MatrixTwoTerms}
\|\mathscr{G}^*\mathscr{G}\|\leq \max_n \left(\sum_{n_2} |\sigma_{n, n_2}|^2\right) + \left( \sum_{n\neq n'} \left|\sum_{n_2} \sigma_{n, n_2} \overline{\sigma_{n',n_2}}\right|^2 \right)^{\frac{1}{2}}.
\end{equation}
Using {Lemma \ref{Neps}}, the first term in (\ref{eq:MatrixTwoTerms}) is bounded as follows,
\begin{align}\label{eq:MatrixEstimate1}
\sum_{n_2} \left|\frac{1}{N_1^{d-1-\alpha}} g_{n+n_2-n_3}(\omega)\right|^2
\leq \frac{N_2^d}{N_1^{2(d-1-\alpha)-\epsilon}}\leq \frac{N_2}{N_1^{2s_c + 1 -2\alpha -\epsilon}}.
\end{align}

Then we will show that the second term in (\ref{eq:MatrixTwoTerms}) is bounded as follows
\begin{equation}\label{eq:MatrixEstimate2}
\left( \sum_{n\neq n'} \left|\sum_{n_2} \sigma_{n, n_2} \overline{\sigma_{n',n_2}}\right|^2 \right)^{\frac{1}{2}}
\leq {N_1^{-2s_c -1 +2\alpha +\epsilon}}{N_2^{\frac{d-1}{2}}}
\end{equation}
Indeed, write
\begin{equation}\label{eq:MatrixEstimate3}
\sum_{n\neq n'} \left|\sum_{n_2} \sigma_{n, n_2} \overline{\sigma_{n',n_2}}\right|^2 =
\frac{1}{N_1^{4(d-1-\alpha)}} \sum_{n\neq n'} \left|\sum_{n_2} g_{n+n_2-n_3}(\omega) \overline{g_{n'+n_2-n_3}}(\omega)\right|^2.
\end{equation}

Then we use  Lemma \ref{MLD}, there exists a set $\Omega^1_\dd$ with $\PPP({\Omega^1_\dd}^c)<e^{-1/\dd^{r}}$, for all $\w\in \Omega^1_\dd$, (\ref{eq:MatrixEstimate3}) can be bounded by
\begin{align}
\label{eq:MatrixEstimate4}\begin{split}
\frac{1}{N_1^{4(d-1-\alpha)}}&
|\{ (n, n', n_2): n\neq n', 2\langle n-n_3, n_2-n_3\rangle = k\\
 & 2\langle n'-n_3, n_2-n_3\rangle = k\}|\end{split}
\end{align}

To bound the number of the elements in $\{ (n, n', n_2): n\neq n', 2\langle n-n_3, n_2-n_3\rangle = k,\ 2\langle n'-n_3, n_2-n_3\rangle = k\}$, first we can count the number of pair $(n, n_2)$ and
by Lemma \ref{Counting2}, it is bounded by $N_1^{d-1+\epsilon} N_2^{d-1}$. And then the number of possible $n'$ is bounded $N_1^{d-1}$ by Lemma \ref{Counting1}. So the size of the upper lattice set is bounded by $N_1^{2(d-1)+\epsilon}N_2^{d-1}$, and hence we hold (\ref{eq:MatrixEstimate2}).

By the estimates (\ref{eq:MatrixEstimate1}) and (\ref{eq:MatrixEstimate2}), we obtain that
\begin{align}\label{eq:BdStep0}
 \begin{split}
 &\left|\int_{J\times\TTT^d} \overline{P_{N_0} e^{it\Delta}\phi} \mathcal{J}_1(\overline R_1, P_{N_2} e^{it\Delta}\phi^{(2)}, P_{N_3} e^{it\Delta}\phi^{(3)}) \,dxdt\right|\\
 \lesssim& \frac{N_1^\epsilon N_2^{\frac{d-1}{4}} N_3^{\frac{d}{2}}}{N_1^{s_c+\frac{1}{2}-\alpha}} \|P_{N_0} \phi\|_{L^2_x}\|P_{N_2}\phi^{(2)}\|_{L^2_x} \|P_{N_3}\phi^{(3)}\|_{L^2_x},
 \end{split}
 \end{align}

\noindent\textbf{Step 1 b)} 
{Second}, let's consider $\J_2$.
By Lemma \ref{Neps}, there exists a set $\Omega^2_\dd$ with $\PPP({\Omega^2_\dd}^c)<e^{-1/\dd^{2\epsilon/3}}$, for all $\w\in \Omega^2_\dd$, we have $|g_n(\w)|\lesssim\frac{\langle n\rangle^\epsilon}{\delta^\epsilon}$.
\begin{align*}
 &\left|\int_{J\times\TTT^d} \overline{P_{N_0} e^{it\Delta}\phi} \J_2(\overline R_1, P_{N_2} e^{it\Delta}\phi^{(2)}, P_{N_3} e^{it\Delta}\phi^{(3)}) \,dxdt\right|\\ 
\leq & \left|\sum_{n\in \ZZZ^d,\, |n|\sim N_0\atop{k\in \ZZZ,\, |k|\lesssim N_1^2}} b_k \overline{a_n} \frac{\overline{g_{n}(\w)}}{\langle n\rangle^{d-1-\alpha}}a_{n}^{(2)}a_{n}^{(3)}\right|
\leq  \frac{N_1^{2\epsilon}}{N_1^{d-1-\alpha}} \left|\sum_{n\in \ZZZ^d,\, |n|\sim N_0\atop{k\in \ZZZ,\, |k|\lesssim N_1^2}} b_k \overline{a_n} a_{n}^{(2)} a_{n}^{(3)}\right|\\
\lesssim & \frac{N_1^{2\epsilon}{ \log (N_1)}}{N_1^{d-1-\alpha}} \|P_{N_0} \phi\|_{L^2_x}\|P_{N_2}\phi^{(2)}\|_{L^2_x}\|P_{N_3}\phi^{(3)}\|_{L^1_x(\TTT^d)}\\
\lesssim &\frac{N_1^{2\epsilon}{ \log (N_1)}}{N_1^{d-1-\alpha}} \|P_{N_0} \phi\|_{L^2_x}\|P_{N_2}\phi^{(2)}\|_{L^2_x}\|P_{N_3}\phi^{(3)}\|_{L^2_x(\TTT^d)}.
 \end{align*}

 If we choose $\Omega_\dd = \Omega_\dd^1 \cap \Omega_\dd^2$, then we obtain that
 \begin{align}\label{eq:BdStep1}
 \begin{split}
 &\left|\int_{J\times\TTT^d} \overline{P_{N_0} e^{it\Delta}\phi} \mathcal{N}(\overline R_1, P_{N_2} e^{it\Delta}\phi^{(2)}, P_{N_3} e^{it\Delta}\phi^{(3)}) \,dxdt\right|\\
 \lesssim& \frac{N_1^\epsilon N_2^{\frac{d-1}{4}} N_3^{\frac{d}{2}}}{N_1^{s_c+\frac{1}{2}-\alpha}} \|P_{N_0} \phi\|_{L^2_x}\|P_{N_2}\phi^{(2)}\|_{L^2_x} \|P_{N_3}\phi^{(3)}\|_{L^2_x},
 \end{split}
 \end{align}
 
\noindent\textbf{Step 2} If we set $2^+$, $\infty^-$, $1^+$ and $q$ satisfying $\frac{1}{2^+} = \frac{1}{2}-\epsilon$, $\frac{2}{\infty^-} + \frac{1}{2^+} = \frac{1}{2}$, $\frac{1}{1^+} + \frac{2}{\infty^-} = 1$, and $\frac{2}{q} + \frac{d}{2^+} = \frac{d}{2}$.
By H\"older inequality and Lemma \ref{LpLarge} after excluding a subset of probability $e^{-\frac{1}{\dd^c}}$, we have 
\begin{align*}
 &\left|\int_{J\times\TTT^d} \overline{P_{N_0} e^{it\Delta}\phi} \mathcal{N}_1
 (\overline R_1, P_{N_2} e^{it\Delta}\phi^{(2)}, P_{N_3} e^{it\Delta}\phi^{(3)})\,dxdt\right|\\
 =&\left|\int_{J\times\TTT^d} \overline{P_{N_0} e^{it\Delta}\phi}\ \overline R_1 P_{N_2} e^{it\Delta}\phi^{(2)}  P_{N_3} e^{it\Delta}\phi^{(3)} \,dxdt\right|\\
 \leq & \|P_{N_0}e^{it\Delta} \phi\|_{L^{\infty}_{t}L^{2}_x(J\times \TTT^d)}\|R_1\|_{L^{\infty^-}_{t,x}}\|P_{N_2} e^{it\Delta}\phi^{(2)}\|_{L^{1^+}_{t}L^{2^+}_x} \|P_{N_3} e^{it\Delta}\phi^{(3)}\|_{L^{\infty^-}_{t,x}}\\
 \lesssim & |J|^{1-\epsilon} \frac{N_2^\epsilon N_3^{\frac{d}{2}-\epsilon}}{N_1^{s_c-\alpha}} \|P_{N_0} \phi\|_{L^2_x}\|P_{N_2} \phi^{(2)}\|_{L^2_x}\|P_{N_3} \phi^{(3)}\|_{L^2_x}.
\end{align*}
And also we have
\begin{align*}
&\left|\int_{J\times\TTT^d} \overline{P_{N_0} e^{it\Delta}\phi} \mathcal{N}_2(\overline R_1, P_{N_2} e^{it\Delta}\phi^{(2)}, P_{N_3} e^{it\Delta}\phi^{(3)}) \,dxdt\right|\\
\lesssim & \left|\int_{J}\left(\int_{\TTT^d} \overline{P_{N_0} e^{it\Delta}\phi}\ \overline{R_1}\,dx\right) \left(\int_{\TTT^d} P_{N_2} e^{it\Delta}\phi^{(2)} P_{N_3} e^{it\Delta}\phi^{(3)}\,dx\right)dt\right|\\
 \leq & \|P_{N_0}e^{it\Delta} \phi\|_{L^{\infty}_{t}L^{2}_x(J\times \TTT^d)}\|R_1\|_{L^{\infty^-}_{t}L^{2}_x}\|P_{N_2} e^{it\Delta}\phi^{(2)}\|_{L^{\infty^-}_{t}L^2_x}\|P_{N_3} e^{it\Delta}\phi^{(3)}\|_{L^{1^+}_{t}L^{2}_x}\\
 \lesssim & |J|^{1-\epsilon} \frac{1}{N_1^{s_c-\alpha}} \|P_{N_0} \phi\|_{L^2_x}
 \|P_{N_2} \phi^{(2)}\|_{L^2_x}\|P_{N_3} \phi^{(3)}\|_{L^2_x}.
\end{align*}

So we obtain that 
\begin{equation}\label{eq:BdStep2}
\begin{split}
&\left|\int_{J\times\TTT^d} \overline{P_{N_0} e^{it\Delta}\phi} \mathcal{N}(\overline R_1, P_{N_2} e^{it\Delta}\phi^{(2)}, P_{N_3} e^{it\Delta}\phi^{(3)}) \,dxdt\right|\\
\lesssim& |J|^{1-\epsilon} \frac{N_2^\epsilon N_3^{\frac{d}{2}-\epsilon}}{N_1^{s_c-\alpha}} \|P_{N_0} \phi\|_{L^2_x}\|P_{N_2} \phi^{(2)}\|_{L^2_x}\|P_{N_3} \phi^{(3)}\|_{L^2_x}.
\end{split}
\end{equation}

\noindent\textbf{Step 3}
Average the estimates (\ref{eq:BdStep1}) (\ref{eq:BdStep2}) in Step 1 and Step 2, we obtain that
\begin{align}\label{eq:LocalTimeEstimate22}
\begin{split}
&\left|\int_{J\times\TTT^d} \overline{P_{N_0} e^{it\Delta}\phi} \mathcal{N}(\overline R_1, P_{N_2} e^{it\Delta}\phi^{(2)}, P_{N_3} e^{it\Delta}\phi^{(3)}) \,dxdt\right|\\
 \lesssim & |J|^{\frac{1}{2}} \frac{ N_2^{\frac{d-1}{8}} N_3^{\frac{d}{2}}}{N_1^{s_c + \frac{1}{4}-\alpha-\epsilon}} \|P_{N_0} \phi\|_{L^2_x}
 \|P_{N_2} \phi^{(2)}\|_{L^2_x}\|P_{N_3} \phi^{(3)}\|_{L^2_x}.
 \end{split}
\end{align}

By (\ref{eq:LocalTimeEstimate22}) and Lemma \ref{timelocalTransferPrinciple}, we hold that
\begin{equation}\label{eq:BdStep3}
\left|\int_0^\dd\int_{\mathbb{T}^d} \mathcal{N}(\overline{R}_{1},  D_2, D_3) \overline u_0 dxdt\right| \lesssim \dd^{\frac{1}{2}} \frac{N_1^\epsilon N_2^{\frac{d-1}{8}} N_3^{\frac{d}{2}}}{N_1^{s_c + \frac{1}{4}-\alpha}} \|u_0\|_{U_\Delta^2}\|D_2\|_{U_\Delta^2} \|D_3\|_{U_\Delta^2}.
\end{equation}

\noindent\textbf{Step 4 (only for $d=3, 4$)} If we set $\frac{1}{p_c^+} = \frac{d}{2(d+2)} - \epsilon$ and $\frac{1}{q} = 1-\frac{3}{p_c^+}$.
By H\"older inequality and Lemma \ref{LpLarge} after excluding a subset of probability $e^{-\frac{1}{\dd^c}}$, we have 
\begin{align*}
 &\left|\int_{J\times\TTT^d} \overline{u_0} \mathcal{N}_1
 (\overline R_1, D_2, D_3)\,dxdt\right|
 =\left|\int_{J\times\TTT^d} \overline{u_0}\ \overline R_1 D_2  D_3 \,dxdt\right|\\
 \leq & \|u_0\|_{L^{p_c^+}_{t,x}(J\times \TTT^d)}\|R_1\|_{L^{q}_{t,x}} \|D_2\|_{L^{p_c^+}_{t,x}} \|D_3\|_{L^{p_c^+}_{t,x}}\\
 \lesssim &  \frac{N_1^\epsilon }{N_1^{s_c-\alpha}} \|u_0\|_{U^{p_c^+}_\Delta}\|D_2\|_{U^{p_c^+}_\Delta}\|D_3\|_{U^{p_c^+}_\Delta}.
\end{align*}
And also we have
\begin{align*}
&\left|\int_{J\times\TTT^d} \overline{u_0} \mathcal{N}_2(\overline R_1, D_2, D_3) \,dxdt\right|
\lesssim  \left|\int_{J}\left(\int_{\TTT^d} \overline{u_0}\ \overline{R_1}\,dx\right) \left(\int_{\TTT^d} D_2 D_3\,dx\right)dt\right|\\
 \leq & \|u_0\|_{L^{p_c^+}_{t}L^{2}_x(J\times \TTT^d)}\|R_1\|_{L^{q}_{t}L^{2}_x}\|D_2\|_{L^{p_c^+}_{t}L^2_x}\|D_3\|_{L^{p_c^+}_{t}L^{2}_x}\\
 \lesssim &  \frac{N_1^\epsilon }{N_1^{s_c-\alpha}} \|u_0\|_{U^{p_c^+}_\Delta}\|D_2\|_{U^{p_c^+}_\Delta}\|D_3\|_{U^{p_c^+}_\Delta}.
\end{align*}

So we obtain that 
\begin{equation}\label{eq:BdStep4}
\left|\int_{J\times\TTT^d} \overline{u_0} \mathcal{N}(\overline R_1, D_2, D_3) \,dxdt\right|
\lesssim  \frac{N_1^\epsilon }{N_1^{s_c-\alpha}} \|u_0\|_{U^{p_c^+}_\Delta}\|D_2\|_{U^{p_c^+}_\Delta}\|D_3\|_{U^{p_c^+}_\Delta}.
\end{equation}

\noindent\textbf{Step 5}
 Finally, when $d=3, 4$, by the embedding properties {Remark} \ref{rmk:embedding} and (\ref{eq:Embedding}),
we average (\ref{eq:BdStep3}) and (\ref{eq:BdStep4}) with weights: $(\frac{8d-16}{5d-1}, \frac{15-3d}{5d-1})$; when $d\geq 5$, we directly use (\ref{eq:BdStep3}). Summarizing these two cases, we obtain that
\begin{equation}\label{eq:prop5.10}
\begin{split}
&\left|\int_0^\dd\int_{\mathbb{T}^d} \mathcal{N}(\overline{R}_{1},  D_2, D_3) \overline u_0 dxdt\right|\\
\lesssim &\dd^{4s_r(d)-\epsilon} \frac{ N_2^{s_c-s}N_3^{s_c-s}}{N_1^{s_c - s +s_r (d) + \frac{1}{4}-\alpha-\epsilon}}  \|u_0\|_{Y^{-s}}\|D_2\|_{X^s} \|D_3\|_{X^s}
\end{split}
\end{equation}
where \[s_r(d) = \begin{cases}
\frac{1}{7}& d= 3\\
\frac{4}{19}& d= 4\\
\frac{1}{4} & d\geq 5.
\end{cases}\]
Since $s<s_c + s_r (d) -\alpha$, we have the proposition.
\end{proof}

\begin{remark}
The proofs when the conjugate is on the different position are similar, for example, $\mathcal{N}(\overline{R_1}, D_2, D_3)$ in the case \textbf{B(d)}. The only difference between $\mathcal{N}(\overline{R_1}, D_2, D_3)$ and $\mathcal{N}({R_1}, \overline{D_2}, D_3)$ in \textbf{B(d)} is (\ref{eq:MatrixEstimate4}). In the $\mathcal{N}(\overline{R_1}, D_2, D_3)$, the set in (\ref{eq:MatrixEstimate4}) should be $\{ (n, n', n_2): n\neq n', 2\langle n_3-n, n+n_2\rangle = k,\ 
 2\langle n_3-n', n'+n_2\rangle = k\}$. First, we can count the number of pair $(n, n_2)$ and
by Lemma \ref{Counting1}, it is bounded by $N_1^{d-2+\epsilon} N_2^{d}$. And then the number of possible $n'$ is bounded $N_1^{d-2+\epsilon}$ by Lemma \ref{Counting2}. So the size of the upper lattice set is bounded by $N_1^{2(d-2)+\epsilon}N_2^{d}$, which is better than the corresponding bound in $\mathcal{N}(R_1, \overline{D_2}, D_3)$.
\end{remark}

\subsection*{Proof of Proposition \ref{NonlinearEstimate}}
\begin{proof}
Suppose dyadic coordinates $N_1\geq N_2 \geq N_3$, consider arbitrary 
${w_l^{(i)}}$ and ${u_l^{(0)}}$ satisfying ${w_l^{(i)}}(t) = w^{(i)}(t)$ and ${u_l^{(0)}}(t) = u^{(0)}(t)$, for $t\in I$ and $i = 1, 2, 3.$ Then we have

\begin{align*}
&\left|\int_0^\delta\int_{\mathbb{T}^4} \mathcal{N}(w^{(1)} + v_0^\omega, \overline{{w^{(2)}} + v_0^\omega}, w^{(3)} + v_0^\omega)\overline{u^{(0)}}dxdt\right|\\
= &\left|\int_0^\delta\int_{\mathbb{T}^4} \mathcal{N}({w_l^{(1)}} + v_0^\omega, \overline{{w_l^{(2)}} + v_0^\omega}, {w_l^{(3)}} + v_0^\omega)\overline{{u_l^{(0)}}}dxdt\right|\\
=& \sum_{N_0\lesssim N_1 \geq N_2 \geq N_3} \left|\int_0^\delta\int_{\mathbb{T}^4} \mathcal{N}(P_{N_1}({w_l^{(1)}} + v_0^\omega), \overline{P_{N_2}({w_l^{(2)}} + v_0^\omega)}, P_{N_3}({w_l^{(3)}} + v_0^\omega))\overline{P_{N_0}{u_l^{(0)}}}dxdt\right|
\end{align*}
There are only two cases: $N_0\sim N_1\geq N_2\geq N_3$ and $N_0\lesssim N_1 \sim N_2 \geq N_3$.

By Proposition 5.3 - 5.11, we can always sum up for these two cases to obtain the following estimate:

\begin{align*}
&\left|\int_0^\delta\int_{\mathbb{T}^d} \mathcal{N}(w^{(1)} + v_0^\omega, \overline{w^{(2)} + v_0^\omega}, w^{(3)} + v_0^\omega)\overline{u^{(0)}}\,dxdt\right|\\
\lesssim & \|{u_l^{(0)}}\|_{Y^{-s}}\left(\dd^{c\min\{1, s-s_c\}}\|{w_l^{(1)}}\|_{X^s}\|{w_l^{(2)}}\|_{X^s}\|{w_l^{(3)}}\|_{X^s}+\dd^{c}\sum_{J\subset\{1, 2, 3\}\atop J\neq \{1,2,3\}}\prod_{j\in J}\|{w_l^{(j)}}\|_{X^s}\right),
\end{align*}
by the definition of $X^s(I)$ and $Y^{-s}(I)$ in Definition \ref{def:LocalNorm}, we obtain Proposition \ref{NonlinearEstimate}.
\end{proof}

\section{Proof of the Theorem \ref{ASThm}}
To prove Theorem \ref{ASThm} (especially the case $s= s_c$), we should introduce two weaker norms $Z^s(I)$ and $Z'^{s}(I)$-norm than $X^s(I)$-norm.

\begin{definition}
  \[\|v\|_{Z^s(I)} := \sup_{J\subset I}
  \left( \sum_{N \in 2^\ZZZ} N^{4s+2-d} \|P_N v\|^4_{L^4(\TTT^d\times J)}
  \right)^{\frac{1}{4}} \text{ and } \|v\|_{Z'^{s} (I)} := \|v\|^{\frac{3}{4}}_{Z^s(I)}\|v\|^{\frac{1}{4}}_{X^s(I)}.\]
\end{definition}

The following property show us that $Z^s(I)$ is a weaker norm than $X^s(I)$.
\begin{prop}\label{prop:ZinX}
  \[\|v\|_{Z^s(I)} \lesssim \|v\|_{X^s(I)}\].
\end{prop}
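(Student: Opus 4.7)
The plan is to reduce the $L^4_{t,x}$ norm at each dyadic frequency to the $X^0$ norm via the $L^4$ Strichartz estimate, and then reassemble in the dyadic sum. First I would verify that $p = 4$ lies in the admissible range of Proposition \ref{Strichartz}: the critical exponent $p_c = \frac{2(d+2)}{d}$ satisfies $p_c < 4$ whenever $d \geq 3$ (for $d=3$, $p_c = 10/3$; for $d\geq 4$, $p_c \leq 3$), so (\ref{eq:Strichartz3}) applies and gives
\[
\|P_N \tilde v\|_{L^4_{t,x}(\RRR\times\TTT^d)} \lesssim N^{\frac{d-2}{4}} \|P_N \tilde v\|_{U^4_\Delta L^2}
\]
for every $\tilde v$.

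Next, by the embeddings recorded in Remark \ref{rmk:embedding} and (\ref{eq:Embedding}), we have $X^0 \hookrightarrow Y^0 \hookrightarrow V^2_\Delta L^2 \hookrightarrow U^4_\Delta L^2$, so the right-hand side above is bounded by $N^{(d-2)/4}\|P_N \tilde v\|_{X^0}$. Taking the infimum over extensions $\tilde v$ of $v$ from $I$ to $\RRR$ (using that Fourier projections $P_N$ commute with such extensions) and using the monotonicity of $L^4$ in the time interval $J \subset I$, we obtain
\[
\|P_N v\|_{L^4_{t,x}(J\times\TTT^d)} \lesssim N^{\frac{d-2}{4}} \|P_N v\|_{X^0(I)}\quad \text{for every dyadic } N \text{ and every } J\subset I.
\]

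Raising to the fourth power and multiplying by the weight $N^{4s+2-d}$ makes the $N^{d-2}$ factor from Strichartz cancel, leaving
\[
N^{4s+2-d}\|P_N v\|_{L^4}^4 \lesssim N^{4s}\|P_N v\|_{X^0(I)}^4 \sim \|P_N v\|_{X^s(I)}^4.
\]
Summing in $N$, the trivial inclusion $\ell^2 \hookrightarrow \ell^4$ (i.e.\ $\sum_N a_N^4 \leq (\sum_N a_N^2)^2$ for $a_N\geq 0$) together with the almost-orthogonality of the Littlewood--Paley blocks built into the $X^s$ norm yields
\[
\sum_N \|P_N v\|_{X^s(I)}^4 \leq \Big(\sum_N \|P_N v\|_{X^s(I)}^2\Big)^2 \sim \|v\|_{X^s(I)}^4.
\]
Taking the fourth root and the supremum over $J \subset I$ (which is actually redundant by monotonicity) closes the estimate.

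I do not expect any serious obstacle: the argument is a single application of the $L^4$ Strichartz estimate plus bookkeeping of dyadic exponents. The only point that requires a moment's care is the embedding $V^2 \hookrightarrow U^4$ used to pass from the Strichartz right-hand side to the $X^0$ norm, which is standard via the right-continuous representatives supplied by Remark \ref{rmk:embedding}.
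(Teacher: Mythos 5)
Your argument is correct and is essentially the paper's own proof: apply the $L^4$ Strichartz bound (\ref{eq:Strichartz3}) blockwise so that $N^{(d-2)/4}$ cancels the $Z^s$ weight, pass from $U^4_\Delta$ to the $X^0$ (hence $X^s$) norm via the embeddings of Remark \ref{rmk:embedding} and (\ref{eq:Embedding}), and conclude with $\ell^2\hookrightarrow\ell^4$ over the dyadic blocks. You merely spell out the extension/restriction bookkeeping and the embedding chain that the paper leaves implicit.
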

\begin{proof}
  By the definition of $Z^s(I)$ and the following Strichartz type estimates
  (Proposition \ref{Strichartz}), we obtain that
  \[
  \sup_{J\subset I}
  \left( \sum_{N \text{ dydic number}} N^{4s+2-d} \|P_N v\|^4_{L^4(\TTT^d\times J)}  \right)^{\frac{1}{4}}
  \lesssim
  \left( \sum_{N \text{ dydic number}} N^{4s} \|P_N v\|^4_{U^4_\Delta}  \right)^{\frac{1}{4}}
  \lesssim \|v\|_{X^s(I)}.
  \]
\end{proof}

\begin{lem}[Bilinear estimates in \cite{herr2014strichartz}]\label{lem:bilinear}
  Assuming $|I|\leq 1$ and $N_1\geq N_2$, for any $v_1\in Y^0(I)$ and $v_2\in Y^{s_c}(I)$, where $s_c = \frac{d}{2}- 1$, there holds that
  \begin{equation}
    \|P_{N_1} v_1 P_{N_2} v_2\|_{L^2_{x,t}(\TTT^d\times I)}
    \lesssim (\frac{N_2}{N_1}+\frac{1}{N_2})^\kappa \|P_{N_1} v_1\|_{Y^0(I)}
    \|P_{N_2}v_2\|_{Y^{s_c}(I)}
  \end{equation}
  for some $\kappa >0$.
\end{lem}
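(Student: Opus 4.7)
The plan is to follow the Fourier-analytic / transfer-principle strategy of Herr--Tataru--Tzvetkov, proceeding in three stages.

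First, I would reduce to free Schr\"odinger solutions. By the chain of embeddings in Remark \ref{rmk:embedding} together with the atomic structure, $Y^s(I) \hookrightarrow V^2_\Delta H^s(I) \hookrightarrow U^p_\Delta H^s(I)$ for any $p>2$. Hence, by the bilinear transfer principle (Proposition \ref{TransferPrinciple}) applied to the operator $B(\phi_1,\phi_2) := e^{it\Delta}\phi_1 \cdot e^{it\Delta}\phi_2$, it suffices to establish
\[
\|P_{N_1} e^{it\Delta}\phi_1 \cdot P_{N_2} e^{it\Delta}\phi_2\|_{L^2_{x,t}(\TTT^d\times I)}
\lesssim N_2^{s_c}\Bigl(\tfrac{N_2}{N_1}+\tfrac{1}{N_2}\Bigr)^{\kappa}\|P_{N_1}\phi_1\|_{L^2}\|P_{N_2}\phi_2\|_{L^2}
\]
for free solutions, and then the interpolation Proposition \ref{Interpolation1} promotes the $U^p_\Delta$ bound to a $V^2_\Delta$ bound at the cost of a harmless logarithmic factor (which can be absorbed by slightly shrinking $\kappa$).

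Second, for the free-solution estimate I would expand in Fourier series and compute directly. Writing $u_1 u_2 := P_{N_1} e^{it\Delta}\phi_1 \cdot P_{N_2} e^{it\Delta}\phi_2$, the spacetime Fourier transform is supported on $(n,\tau)$ with
\[
\widehat{u_1 u_2}(n,\tau) = \sum_{\substack{n_1+n_2 = n\\ |n_1|^2+|n_2|^2 = \tau \\ |n_i|\sim N_i}} \widehat{\phi}_1(n_1)\widehat{\phi}_2(n_2).
\]
Plancherel combined with Cauchy--Schwarz in the inner sum gives
\[
\|u_1 u_2\|_{L^2_{x,t}}^2 \leq \Bigl(\sup_{n,\tau} C(n,\tau)\Bigr) \|P_{N_1}\phi_1\|_{L^2}^2 \|P_{N_2}\phi_2\|_{L^2}^2,
\]
where $C(n,\tau)$ is the number of lattice pairs $(n_1,n_2)$ satisfying all three constraints. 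Parametrizing by $n_1$, the constraint $|n_1|^2+|n-n_1|^2 = \tau$ becomes $2|n_1|^2 - 2\langle n, n_1\rangle + |n|^2 = \tau$, which places $n_1$ on a $(d-1)$-dimensional sphere. The feasible $n_1$ lie on this sphere intersected with the annular shell $\{|n_1|\sim N_1\}\cap \{|n-n_1|\sim N_2\}$.

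Third, I would carry out the lattice count. The naive spherical count (Lemma \ref{Counting1}) already gives $C(n,\tau)\lesssim N_1^{d-2+\epsilon}$, which after taking square roots and using $N_1 \geq N_2$ produces $N_2^{s_c}$ but no gain. The improvement to the factor $(N_2/N_1 + 1/N_2)^{\kappa}$ is the crux and is the main obstacle. When $N_1 \gg N_2$, the second constraint $|n-n_1|\sim N_2$ confines $n_1$ to a cap of angular width $\sim N_2/N_1$ on the sphere, and the discrete restriction / divisor bounds of Bourgain--Demeter (used to prove Proposition \ref{Strichartz}) localize the spherical count onto this cap to yield the gain $(N_2/N_1)^{\kappa}$; the alternative term $(1/N_2)^{\kappa}$ absorbs the degenerate regime where $n_2$ lies in a small neighborhood of the origin (or of $n_1$) and the sphere becomes nearly tangent to the annular shell. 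Combining these bounds gives $C(n,\tau)\lesssim N_2^{d-2}(N_2/N_1+1/N_2)^{2\kappa}$, and taking square roots yields exactly the desired bilinear estimate. The restriction $|I|\leq 1$ enters only to ensure the frequency variable $\tau$ is effectively quantized to integer scale, so the Plancherel step above makes sense and the implicit constants are uniform. Finally, the $Y^{s_c}$ norm on $v_2$ is exactly what encodes the $N_2^{s_c}$ weight when distributing the inequality across dyadic pieces.
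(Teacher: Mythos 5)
Your outer skeleton (reduce to free solutions, then upgrade to the $Y^0\times Y^{s_c}$ setting) is the same as in the proof the paper points to: the lemma is not proved here but cited as Proposition 2.8 of \cite{herr2014strichartz}, and the paper's remark makes clear that that proof obtains the gain from the scale-invariant $L^p$ Strichartz estimates of Proposition \ref{Strichartz} at exponents $p<4$, applied after decomposing the high-frequency factor into cubes of side $N_2$ and after further strip partitions in the \emph{time} frequency, followed by the $U^p\to V^2$ interpolation. The heart of your argument, however, is a genuine gap: you reduce everything to the pointwise fiber count $C(n,\tau)$ of pairs with $n_1+n_2=n$, $|n_1|^2+|n_2|^2=\tau$, $|n_i|\sim N_i$, and then assert that "discrete restriction / divisor bounds of Bourgain--Demeter localize the spherical count onto the cap" to give $C(n,\tau)\lesssim N_2^{d-2}(N_2/N_1+1/N_2)^{2\kappa}$. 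Decoupling is an $L^p$ inequality for exponential sums; it does not produce pointwise lattice-point counts on a cap, and no such cap-localized counting bound is supplied or available at this level of generality (for $d=3$, counts of lattice points of a sphere of radius $\sim N_1$ inside a ball of radius $N_2$ are tied to delicate equidistribution questions, far beyond Lemma \ref{Counting1}). Cauchy--Schwarz in the fiber discards exactly the structure that the cube/strip decomposition plus sub-critical-exponent Strichartz estimates exploit, which is why the cited proof does not go through counting. So the crucial factor $(N_2/N_1+1/N_2)^{\kappa}$ is asserted, not proved.

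There is also a smaller but real gap in the reduction itself. Proposition \ref{TransferPrinciple} transfers an $L^2_{t,x}$ bound for free solutions only to inputs measured in $U^2_\Delta$, and since $V^2$ does not embed into $U^2$, you cannot reach the $Y$-norms from that alone. To invoke Proposition \ref{Interpolation1} (or \ref{Interpolation2}) you need \emph{two} estimates: the refined one with the small constant in $U^2_\Delta$, and a companion crude estimate in $U^{q}_\Delta$ for some $q>2$, which in the cited proof comes from H\"older together with Proposition \ref{Strichartz} at exponents above $p_c$; only then does the interpolation yield the $V^2$ (hence $Y^0\times Y^{s_c}$) bound with a logarithmic loss absorbable into $\kappa$. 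Your write-up treats this as automatic, but without the second estimate the promotion step does not go through even if the counting bound were granted.
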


\begin{remark}
  This Bilinear estimate is a simple d-dimension generalization of  Proposition 2.8 in \cite{herr2014strichartz}. The proof 
  of Lemma \ref{lem:bilinear} is almost the same as the $d=4$ case in \cite{herr2014strichartz} and   heavily rely on $L^p$ estimates in
  Proposition \ref{Strichartz} (for some $p<4$). In the proof not only
  we need the decoupling properties for spatial frequency, but also
  we need further trip partitions to apply the decoupling properties for time
  frequency.
\end{remark}

Let's introduce an refined nonlinear estimate for $s=s_c$ case, which is a d-dimension generalization of Lemma 3.2 in \cite{ionescu2012global2}.

\begin{prop}[Refined nonlinear estimate]\label{prop:refinednonlinear}
  For $v_k\in X^{s_c}(I)$, $k=1, 2, 3$, $|I|\leq 1$, we hold the estimate
  \begin{equation}\label{eq:nonlinear}
    \|\mathcal{I}(\mathcal{N}( \widetilde{v_1}, \widetilde{v_2}, \widetilde{v_3}))\|_{X^{s_c}(I)}\lesssim
    \sum_{\{i, j, k\}=\{1, 2, 3\}} \|v_i\|_{X^{s_c}(I)}\|v_j\|_{Z'^{s_c}(I)}\|v_k\|_{Z'^{s_c}(I)}
  \end{equation}
  where $\widetilde{v_k} =v_k$ or $\widetilde{v_k} = \overline{v_k}$
  for $k=1, 2, 3$.
\end{prop}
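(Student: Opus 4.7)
The plan is to follow the Ionescu--Pausader refined nonlinear estimate strategy (essentially the $d$-dimensional analogue of \cite[Lemma 3.2]{ionescu2012global2}), exploiting the bilinear Strichartz gain of Lemma \ref{lem:bilinear} together with interpolation between the atomic $X^{s_c}$ norm and the $L^4_{t,x}$-based norm $Z^{s_c}$.

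First I would reduce, by Proposition \ref{DualProp}, to estimating
\[
\left|\int_I\int_{\TTT^d}\mathcal{N}(\widetilde v_1,\widetilde v_2,\widetilde v_3)\,\overline{v_0}\,dx\,dt\right|
\]
for arbitrary $v_0\in Y^{-s_c}(I)$ of unit norm. After a Littlewood--Paley decomposition $v_j=\sum_{N_j}P_{N_j}v_j$ for $j=0,1,2,3$ and using the symmetry assumption $N_1\ge N_2\ge N_3$, Fourier support forces either $N_0\sim N_1$ (the \emph{high-high} case) or $N_0\lesssim N_1\sim N_2$ (the \emph{comparable} case). The resonant term $\mathcal{N}_2$ is handled by the same scheme after reducing the inner spatial integral via Plancherel, and is strictly easier; the outline below focuses on $\mathcal{N}_1$.

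In each configuration, I would split the four-linear integral by Cauchy--Schwarz into two bilinear $L^2_{t,x}$ pairings in a high-low manner (for instance $(P_{N_0}v_0,P_{N_3}v_3)$ and $(P_{N_1}v_1,P_{N_2}v_2)$ in the high-high case). Applying Lemma \ref{lem:bilinear} to each pair yields a gain $(N_{\text{low}}/N_{\text{high}}+N_{\text{low}}^{-1})^{\kappa}$ per pair which, together with the $\langle n_0\rangle^{-s_c}$ and $\langle n_j\rangle^{s_c}$ weights in the $Y^{-s_c}$ and $X^{s_c}$ norms, makes the resulting dyadic sum summable in all four parameters $N_0,N_1,N_2,N_3$ and produces the ``all-$X^{s_c}$'' bound $\|v_0\|_{Y^{-s_c}}\prod_j\|v_j\|_{X^{s_c}}$. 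In parallel, the \emph{same} bilinear pairings can be bounded by H\"older $L^4_{t,x}\cdot L^4_{t,x}$ and the definition of $Z^{s_c}(I)$, giving a companion ``$X$-$Z$-$Z$'' bound that is worse in dyadic summability but carries only the $Z^{s_c}$ norm on two chosen factors.

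Geometrically interpolating these two bounds at weights $(\tfrac14,\tfrac34)$, in the spirit of Propositions \ref{Interpolation1}--\ref{Interpolation2}, replaces two of the three $X^{s_c}$ factors by $\|v_j\|_{Z^{s_c}}^{3/4}\|v_j\|_{X^{s_c}}^{1/4}=\|v_j\|_{Z'^{s_c}}$, while the remaining factor keeps its full $X^{s_c}$ norm; symmetrizing over the choice of the distinguished factor yields the right-hand side of \eqref{eq:nonlinear}. The main obstacle will be ensuring that the dyadic summation over $(N_0,N_1,N_2,N_3)$ still converges after the $(\tfrac14,\tfrac34)$ interpolation: the bilinear gain $(N_{\text{low}}/N_{\text{high}})^{\kappa}$ must absorb the loss incurred by the $L^4$-based bound in \emph{both} pairings of Cauchy--Schwarz, which dictates how the pairing must be chosen in each of the two frequency cases above. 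A minor auxiliary point is to transfer all bounds to the restriction spaces $X^{s_c}(I)$, $Y^{s_c}(I)$, $Z^{s_c}(I)$ via admissible extensions, which is standard given $|I|\le 1$.
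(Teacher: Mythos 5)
Your skeleton coincides with the paper's proof: dualize via Proposition \ref{DualProp}, perform a Littlewood--Paley decomposition with the two configurations $N_0\sim N_1\ge N_2\ge N_3$ and $N_0\lesssim N_1\sim N_2$, split by Cauchy--Schwarz into two bilinear $L^2_{t,x}$ pairings, apply Lemma \ref{lem:bilinear}, run a companion $L^4$-based bound, and interpolate with weights $(\tfrac14,\tfrac34)$ to produce the $Z'^{s_c}$ factors. However, the companion bound as you describe it does not close, and that is exactly where the substance of the proof lies. If, in the case $N_0\sim N_1\ge N_2\ge N_3$, you estimate each Littlewood--Paley block in $L^4_{t,x}$ at its own frequency scale, then Proposition \ref{Strichartz} costs $N_0^{\frac{d-2}{4}}N_1^{\frac{d-2}{4}}$ on the two high blocks measured in $Y^0$, while the low blocks give $N_2^{-\frac{d-2}{4}}N_3^{-\frac{d-2}{4}}$ against $Z^{s_c}$; after restoring the $\langle n\rangle^{\pm s_c}$ weights the companion bound carries a net factor $\bigl(N_1^2/(N_2N_3)\bigr)^{\frac{d-2}{4}}$, and this is independent of how you group the four factors under Cauchy--Schwarz, since both high blocks land in $L^4$ in any such grouping. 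After interpolating with weight $\tfrac34$ on this bound and $\tfrac14$ on the bilinear bound, dyadic summability would require $\tfrac{\kappa}{4}>\tfrac{3(d-2)}{16}$, i.e. $\kappa>\tfrac{3(d-2)}{4}$, whereas Lemma \ref{lem:bilinear} only supplies some small $\kappa>0$. So the resolution you propose for the obstacle you flag --- choosing the pairing suitably --- cannot rescue the argument.

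The missing idea, which is what the paper does in (\ref{3.3}), is to make the $Z$-based companion bound \emph{lossless}: decompose the high-frequency factor of each pairing into cubes whose side length equals the low frequency of that pairing ($P_{N_0}u_0$ into cubes of size $N_2$, $P_{N_1}v_1$ into cubes of size $N_3$), exploit the almost orthogonality in $L^2_{t,x}$ of the products $P_{C_j}P_{N_0}u_0\,P_{N_2}v_2$ and $P_{C_k}P_{N_1}v_1\,P_{N_3}v_3$ to sum in $\ell^2$ over the cubes, and apply the Strichartz estimate (\ref{eq:Strichartz2}) at the cube scale, so the losses are $N_2^{\frac{d-2}{4}}$ and $N_3^{\frac{d-2}{4}}$, precisely the weights appearing in the definition of $Z^{s_c}$. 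The companion bound then has no dyadic loss at all, and the interpolation retains a gain $\bigl(\tfrac{N_3}{N_1}+\tfrac{1}{N_3}\bigr)^{\kappa/4}\bigl(\tfrac{N_2}{N_0}+\tfrac{1}{N_2}\bigr)^{\kappa/4}$, which suffices for summation for any $\kappa>0$; the case $N_0\le N_2\sim N_1$ is handled the same way, and symmetrizing over the orderings of $N_1,N_2,N_3$ gives (\ref{eq:nonlinear}). With this cube-decomposition-plus-orthogonality ingredient added, your outline becomes the paper's proof.
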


\begin{proof}
  By Proposition \ref{DualProp}, we
  suppose $N_0$, $N_1$, $N_2$, $N_3$ are dyadic, and by the symmetry, we assume $N_1\geq N_2 \geq N_3$. Since it's easy to check that $\mathcal{N}_2$ is simple to bound, we just need to show the case $\mathcal{N}_1$.
\begin{align*}
  &\|\mathcal{I}(\mathcal{N}_1( \widetilde{v_1}, \widetilde{v_2}, \widetilde{v_3}))\|_{X^{s_c}(I)} \lesssim
  \sup_{\|u_0\|_{Y^{-s_c}}=1} \left|\int_{\TTT^d\times I} \overline{u_0}
  \prod_{k=1}^3 \widetilde{v_k} \,dxdt\right|\\
  \leq & \sup_{\|u_0\|_{Y^{-s_c}}=1} \sum_{N_0, N_1\geq N_2\geq N_3}\left|
  \int_{\TTT^d\times I} \overline{P_{N_0} u_0}
  \prod_{k=1}^3 P_{N_k}\widetilde{v_k} \,dxdt\right|
\end{align*}

Then we know $N_1 \sim \max \{N_2, N_0\}$ by the spatial frequency orthogonality.
There are two cases:
\begin{enumerate}
  \item $N_0\sim N_1 \geq N_2 \geq N_3$;
  \item $N_0\leq N_2 \sim N_1 \geq N_3$.
\end{enumerate}

\case{1}{$N_0\sim N_1 \geq N_2 \geq N_3$}
By Cauchy-Schwartz inequality and Lemma \ref{lem:bilinear}, we have that
\begin{equation}\label{3.2}
\begin{split}
  & \left|\int \overline{P_{N_0} u_0} P_{N_1} \widetilde{v_1} P_{N_2} \widetilde{v_2}
  P_{N_3} \widetilde{v_3}\, dxdt\right|
  \leq \|P_{N_0} u_0 P_{N_2} v_2\|_{L^2_{x,t}}
    \|P_{N_1}v_1 P_{N_3} v_3\|_{L^2_{x,t}}\\
  \lesssim& (\frac{N_3}{N_1} + \frac{1}{N_3})^\kappa
  (\frac{N_2}{N_0} + \frac{1}{N_2})^\kappa \|P_{N_0} u_0\|_{Y^0(I)}
  \|P_{N_1}v_1\|_{Y^0(I)}\|P_{N_2}v_2\|_{X^{s_c}(I)}\|P_{N_3}v_3\|_{X^{s_c}(I)}
\end{split}
\end{equation}

Assume $\{C_j\}$ us a cube partition of size $N_2$, and $\{C_k\}$ is a cube
partition of size $N_3$. By $\{P_{C_j}P_{N_0} u_0 P_{N_2} v_2\}_j$
and $\{P_{C_k}P_{N_1}v_1 P_{N_3}v_3\}_k$ are both almost orthogonality,
Proposition \ref{Strichartz} and definition of $Z^{s_c}$ norm, we obtain that
\begin{equation}\label{3.3}
  \begin{split}
    & \left|\int \overline{P_{N_0} u_0} P_{N_1} \widetilde{v_1} P_{N_2} \widetilde{v_2}
    P_{N_3} \widetilde{v_3}\, dxdt\right|
    \leq \|P_{N_0} u_0 P_{N_2} v_2\|_{L^2_{x,t}}
    \|P_{N_1}v_1 P_{N_3} v_3\|_{L^2_{x,t}}\\
    \lesssim & (\sum_{C_j} \|P_{C_j}P_{N_0} u_0 P_{N_2} v_2\|^2_{L^2_{x,t}})^{\frac{1}{2}}
    (\sum_{C_k} \|P_{C_k}P_{N_1}v_1 P_{N_3}v_3 \|^2)^{\frac{1}{2}}\\
    \lesssim & (\sum_{C_j} \|P_{C_j}P_{N_0} u_0\|^2_{L^4_{x,t}}
    \|P_{N_2} u_2\|^2_{L^4_{x,t}})^{\frac{1}{2}}
    (\sum_{C_k} \|P_{C_k}P_{N_1}v_1\|_{L^4_{x,t}}^2
    \| P_{N_3}v_3 \|^2_{L^4_{x,t}})^{\frac{1}{2}}\\
    \lesssim & (\sum_{C_j} \|P_{C_j}P_{N_0} u_0\|^2_{Y^0(I)}
    (N_2^{\frac{d-2}{4}}\|P_{N_2} v_2\|_{L^4_{x,t}})^2)^{\frac{1}{2}}
    (\sum_{C_k} \|P_{C_k}P_{N_1} v_1\|^2_{Y^0(I)}
    (N_3^{\frac{d-2}{4}}\|P_{N_3} v_3\|_{L^4_{x,t}})^2)^{\frac{1}{2}}\\
    \lesssim& \|P_{N_0}u_0\|_{Y^0(I)}\|P_{N_1}v_1\|_{Y^0(I)}\|P_{N_2}v_2\|_{Z^{s_c}(I)}
    \|P_{N_2}v_2\|_{Z^{s_c}(I)}.
  \end{split}
\end{equation}

Interpolate (\ref{3.2}) and (\ref{3.3}), and $N_0\sim N_1$, we have
\begin{equation}\label{3.4}
\begin{split}
  &\left|\int \overline{P_{N_0} u_0} P_{N_1} \widetilde{v_1} P_{N_2} \widetilde{v_2}
  P_{N_3} \widetilde{v_3}\, dxdt\right|\\
  \lesssim&
  (\frac{N_3}{N_1} + \frac{1}{N_3})^{\kappa_1}
  (\frac{N_2}{N_0} + \frac{1}{N_2})^{\kappa_1}
  \|P_{N_0}u_0\|_{Y^{-s_c}(I)}\|P_{N_1}v_1\|_{X^{s_c}(I)}\|P_{N_2}v_2\|_{Z'^{s_c}(I)}
  \|P_{N_2}v_2\|_{Z'^{s_c}(I)}.
\end{split}
\end{equation}

Sum (\ref{3.4}) over all $N_0\sim N_1\geq N_2\geq N_3$,
\begin{align*}
    &\sum_{N_0\sim N_1\geq N_2\geq N_3}
    (\frac{N_3}{N_1} + \frac{1}{N_3})^{\kappa_1}
    (\frac{N_2}{N_0} + \frac{1}{N_2})^{\kappa_1}
    \|P_{N_0}u_0\|_{Y^{-s_c}(I)}\|P_{N_1}v_1\|_{X^{s_c}(I)}\|P_{N_2}v_2\|_{Z'^{s_c}(I)}
    \|P_{N_2}v_2\|_{Z'^{s_c}(I)}\\
    \lesssim & \|u_0\|_{Y^{-s_c}(I)}\|v_1\|_{X^{s_c}{I}}\|v_2\|_{Z'^{s_c}(I)}\|v_3\|_{Z'^{s_c}(I)}.
\end{align*}

\case{2}{$N_0\leq N_2\sim N_1\geq N_3$}
Similar, we have
\begin{equation}\label{3.5}
\begin{split}
  &\left|\int \overline{P_{N_0} u_0} P_{N_1} \widetilde{v_1} P_{N_2} \widetilde{v_2}
  P_{N_3} \widetilde{v_3}\, dxdt\right|\\
  \lesssim& (\frac{N_3}{N_1} + \frac{1}{N_3})^\kappa
  (\frac{N_0}{N_2} + \frac{1}{N_0})^\kappa \|P_{N_0} u_0\|_{Y^0(I)}
  \|P_{N_1}v_1\|_{Y^0(I)}\|P_{N_2}v_2\|_{X^{s_c}(I)}\|P_{N_3}u_3\|_{X^{s_c}(I)}.
\end{split}
\end{equation}

Similar with (\ref{3.3}), we obtain that:

\begin{equation}\label{3.6}
\begin{split}
  &\left|\int \overline{P_{N_0} u_0} P_{N_1} \widetilde{v_1} P_{N_2} \widetilde{v_2}
  P_{N_3} \widetilde{v_3}\, dxdt\right|\\
  \lesssim& \|P_{N_0} u_0\|_{Y^0(I)}
  \|P_{N_1}v_1\|_{Y^0(I)}\|P_{N_2}v_2\|_{Z^{s_c}(I)}\|P_{N_3}v_3\|_{Z^{s_c}(I)}.
\end{split}
\end{equation}

Interpolating (\ref{3.5}) and (\ref{3.6}), and summing over $N_0\leq N_2\sim N_1\geq N_3$,
we have
\begin{align*}
  &\sum_{N_0\leq N_2\sim N_1\geq N_3}
  \left|\int \overline{P_{N_0} u_0} P_{N_1} \widetilde{v_1} P_{N_2} \widetilde{v_2}
  P_{N_3} \widetilde{v_3}\, dxdt\right|\\
  \lesssim & \|P_{N_0} u_0\|_{Y^{-s_c}(I)}
  \|P_{N_1}v_1\|_{X^{s_c}(I)}\|P_{N_2}v_2\|_{Z'^{s_c}(I)}\|P_{N_3}v_3\|_{Z'^{s_c}(I)}.
\end{align*}

 Summarize these two cases, and similarly consider
 $N_1\geq N_3\geq N_2$, $N_2\geq N_1\geq N_3$, $N_2\geq N_3\geq N_1$,
 $N_3\geq N_1\geq N_2$, and $N_3\geq N_2\geq N_1$,
then we can get the desired estimate (\ref{eq:nonlinear}).

\end{proof}

\begin{proof}[Proof of Theorem \ref{ASThm}]
Suppose $d\geq 3$,  $s_r(d)$ is defined as (\ref{coef:srd})
and $0\leq \alpha< s_r(d)$.  

Consider the mapping 
\[
\Phi (w) = \mathcal{I}(\mathcal{N}(w + v_0^\omega)),
\]
and then the fixed point $w = \Phi(w)$ of the mapping $\Phi$ is the solution of IVP (\ref{GNLS.IVP}).

\case{1}{$s_c<s < s_c + s_r(d)-\alpha$}
Consider the set 
\[
S = \{w\in X^{s}(I): \|w\|_{X^s(I)}\leq 1\}.
\]
where $I = [0,\delta]$ and $\delta$ is to be determined.

To show $\Phi$ is a contraction mapping in $S$. Using Proposition \ref{DualProp}, Proposition \ref{NonlinearEstimate} and choosing $\delta$ small enough, we obtain that
\begin{align*}
\|\Phi(w)\|_{X^s(I)}&\lesssim  \delta^{c\min \{1, s-s_c\}} (1+\|w\|_{X^s(I)}+ \|w\|^2_{X^s(I)} +\|w\|^3_{X^s(I)})\leq 1.
\end{align*}
For any $w, v \in S$, using Proposition \ref{DualProp}, Proposition \ref{NonlinearEstimate} and choosing $\delta$ small enough, there exists $0<k<1$ such that
\begin{align*}
\|\Phi(w) - \Phi(v)\|_{X^s(I)}&\lesssim  \delta^{c\min \{1, s-s_c\}} (1+ \|w\|_{X^s(I)}+\|v\|_{X^s(I)}+\|w\|^2_{X^s(I)}+\|v\|^2_{X^s(I)}) \|w-v\|_{X^s(I)}\\
&\leq k\|w-v\|_{X^s(I)}.
\end{align*}
So $\Phi$ is a contraction mapping.

\case{2}{$s = s_c$}
Consider the set
\[
S = \{w\in X^{s_c}(I): \|w\|_{X^{s_c}(I)}\leq 1, \quad \|w\|_{Z'^{s_c}(I)}\leq a\}.
\]
where $I = [0,\delta]$. $a$ and $\delta$ is to be determined.

To show $\Phi$ is a contraction mapping in $S$. By Proposition \ref{DualProp}, Proposition \ref{NonlinearEstimate}, Proposition \ref{prop:refinednonlinear}, choosing $\delta$ small enough, we obtain that
\begin{align*}
\|\Phi(w)\|_{X^{s_c}(I)}&\lesssim  \delta^{c} (1+\|w\|_{X^{s_c}(I)}+ \|w\|^2_{X^{s_c}(I)}) +
\|w\|^2_{Z'^{s_c}(I)} \|w\|_{X^{s_c}(I)}\\
&\lesssim \delta^c + a^2.
\end{align*}
and also 
\begin{align*}
\|\Phi(w)\|_{Z'^{s_c}(I)}&\lesssim  \delta^{c} (1+\|w\|_{X^{s_c}(I)}+ \|w\|^2_{X^{s_c}(I)}) +
\|w\|^2_{Z'^{s_c}(I)} \|w\|_{X^{s_c}(I)}\\
&\lesssim \delta^c + a^2.
\end{align*}

For any $w, v \in S$, by Proposition \ref{DualProp}, Proposition \ref{NonlinearEstimate} and Proposition \ref{prop:refinednonlinear}, choosing $\delta$ small enough, there exists $0<k<1$ such that
\begin{align*}
\|\Phi(w) - \Phi(v)\|_{X^{s_c}(I)}&\lesssim (\|w\|_{Z'^{s_c}(I)}+ \|v\|_{Z'^{s_c}(I)}) (\|w\|_{X^{s_c}(I)(I)}+ \|v\|_{X^{s_c}(I)})\|w-v\|_{X^{s_c}(I)}\\
&+ \delta^c (\|w\|_{X^{s_c}(I)}+ \|v\|_{X^{s_c}(I)}+1)\|w-v\|_{X^{s_c}(I)}\\
&\lesssim (a + \delta^c) \|w-v\|_{X^{s_c}(I)}.
\end{align*}

Set $a = \delta$ and let $\delta$ small enough, then we obtain that $\Phi$ is a contraction mapping.

\end{proof}

\section{The analog result in $X^{s,b}$ space}
 $X^{s, b}$ spaces (also known as \textit{Fourier restriction spaces} or \textit{Bourgain spaces}) were firstly introduced by Bourgain \cite{bourgain1993fourier}\cite{bourgain1993fourier2} in the context of Schr\"odinger and KdV equations. A nice summary is give by Tao (Section 2.6 in \cite{book:tao}).

\begin{thm}[Analog of Theorem \ref{MainThm} in $X^{s, b}$] \label{thm:MainThmXsb}
Suppose $d\geq 3$ and   
$s_r(d)$ is defined by (\ref{coef:srd})
Let $0\leq \alpha< s_r(d)$, $s\in (s_c, s_c + s_r(d)-\alpha)$.  Then there exist some $b>\frac{1}{2}$, $\delta_0 > 0$ and $r = r(s, \alpha) > 0$ such that for any $ 0 < \delta < \delta_0$, there exists $\Omega_\delta \in A$ with
$$
\mathbb{P}(\Omega_\delta^c) < e^{-\frac{1}{\delta^r}},
$$
and for each $\omega \in \Omega_\delta$ there exists a unique solution $u$ of (\ref{NLS.IVP}) in the space
$$
S(t) \phi^{\omega} + X^{s, b}([0, \delta])_{\text{dist}},
$$
where $S(t)\phi^{\omega}$ is the linear evolution of the initial data $\phi^{\omega}$ given by (\ref{RandomInitialData}).
\end{thm}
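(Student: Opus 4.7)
The proof of Theorem \ref{thm:MainThmXsb} will follow the same overall blueprint as Theorem \ref{ASThm}. Namely, I would first reduce (\ref{NLS.IVP}) to the remainder IVP (\ref{RNLS.IVP}) via the gauge transform in (\ref{Metric}), and then construct the solution $w$ as the fixed point of
\[
\Phi(w) := \mathcal{I}(\mathcal{N}(w+v_0^\omega))
\]
on a small ball in $X^{s,b}([0,\delta])$ for some $b>\tfrac{1}{2}$ to be chosen close to $\tfrac{1}{2}$, where $X^{s,b}([0,\delta])$ is the standard restriction-in-time Fourier restriction space of Bourgain with norm built from $\langle n\rangle^{s}\langle\tau+|n|^2\rangle^b$.

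The key ingredients would be: (i) the classical linear $X^{s,b}$ estimates
\[
\|\psi_\delta(t)S(t)\phi^\omega\|_{X^{s,b}}\lesssim \|\phi^\omega\|_{H^s},\qquad \|\mathcal{I}(f)\|_{X^{s,b}([0,\delta])}\lesssim \delta^{\kappa}\|f\|_{X^{s,b-1+\kappa}([0,\delta])},
\]
which hold for some $\kappa>0$ as long as $b>\tfrac{1}{2}$ is taken sufficiently close to $\tfrac{1}{2}$; and (ii) the embedding $X^{s,b}([0,\delta])\hookrightarrow X^{s}([0,\delta])$ (in the adapted atomic sense of Definition \ref{def:Xs}), which holds for $b>\tfrac{1}{2}$ and follows from $X^{s,b}\hookrightarrow U^{2}_\Delta H^{s}$ and the chain (\ref{eq:Embedding}). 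With these in hand, I would dualize $\mathcal{I}(\mathcal{N}(\cdot))$ against $Y^{-s}$ (as in Proposition \ref{DualProp}), apply the already-proven probabilistic nonlinear estimate Proposition \ref{NonlinearEstimate} to the atomic-space representatives of $w^{(i)}$, and then re-express the resulting bound in terms of the $X^{s,b}$ norms via the embedding. The restriction on $\omega$ is the same set $\Omega_\delta$ constructed in Section 5, with $\mathbb{P}(\Omega_\delta^c)<e^{-1/\delta^r}$.

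The contraction argument itself is then standard: on the ball $\{w:\|w\|_{X^{s,b}([0,\delta])}\leq 1\}$, the estimates of the previous paragraph combine to give
\[
\|\Phi(w)\|_{X^{s,b}([0,\delta])}\lesssim \delta^{c\min\{1,s-s_c\}}\Bigl(1+\sum_{k=1}^{3}\|w\|^{k}_{X^{s,b}([0,\delta])}\Bigr),
\]
together with an analogous Lipschitz bound on differences $\Phi(w)-\Phi(v)$. Choosing $\delta$ small enough produces a unique fixed point; transferring back through the gauge (\ref{Metric}) yields the solution of (\ref{NLS.IVP}) on $S(t)\phi^{\omega}+X^{s,b}([0,\delta])_{\mathrm{dist}}$.

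The main obstacle, and the reason why the range is $s\in(s_c,s_c+s_r(d)-\alpha)$ rather than $[s_c,\ldots)$ as in Theorem \ref{MainThm}, is the endpoint $s=s_c$: the refined critical-regularity bound relies on Proposition \ref{prop:refinednonlinear} together with the weaker $Z^{s}$ norm and the bilinear Strichartz estimate of Lemma \ref{lem:bilinear}, both tailored to the atomic $U^p/V^p$ framework. The $X^{s,b}$ space does not carry a comparably efficient refinement at the scaling-critical regularity, so we must allow some $s-s_c>0$ to absorb the (logarithmic) loss incurred by the embedding $X^{s,b}\hookrightarrow X^s$ and by the use of $b>\tfrac{1}{2}$. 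A secondary technical point to check carefully is that the factors $\delta^c$ obtained in the case-by-case proof of Proposition \ref{NonlinearEstimate} (which crucially used the local transfer principle Proposition \ref{timelocalTransferPrinciple}) still survive the passage to $X^{s,b}$; this is where the small power $\delta^\kappa$ in the Duhamel estimate is used decisively.
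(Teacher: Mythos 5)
There is a genuine gap at the center of your argument: the transfer from the atomic-space estimate to an $X^{s,b}$ bound goes in the wrong direction. Proposition \ref{DualProp} together with Proposition \ref{NonlinearEstimate} only controls $\|\mathcal{I}(\mathcal{N}(w+v_0^\omega))\|_{X^s([0,\delta])}$, i.e.\ the \emph{atomic} norm of the Duhamel term, after pairing against $Y^{-s}$. Since for $b>\frac12$ the embedding runs $X^{s,b}\hookrightarrow U^2_\Delta H^s\hookrightarrow X^s$ and is not reversible, a bound on $\|\Phi(w)\|_{X^s}$ gives no control whatsoever of $\|\Phi(w)\|_{X^{s,b}}$; hence the contraction estimate $\|\Phi(w)\|_{X^{s,b}([0,\delta])}\lesssim \delta^{c\min\{1,s-s_c\}}(1+\dots)$ that you assert does not follow from the ingredients you list, and the fixed point you would actually produce lives only in $X^s([0,\delta])$ — which is Theorem \ref{MainThm} again, not its $X^{s,b}$ analog. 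Equivalently, your linear estimate $\|\mathcal{I}(f)\|_{X^{s,b}([0,\delta])}\lesssim \delta^{\kappa}\|f\|_{X^{s,b-1+\kappa}([0,\delta])}$ would require a trilinear estimate on $\mathcal{N}(w+v_0^\omega)$ in $X^{s,b-1+\kappa}$ (equivalently, a dual pairing against a space of modulation weight $1-b<\frac12$), and this is never established in the proposal; it cannot be borrowed from Proposition \ref{NonlinearEstimate}, whose dual factor is measured in $Y^{-s}\subset V^2_\Delta H^{-s}$, a strictly stronger norm on the dual side than $\widetilde{X}^{-s,1-b}$.

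This is precisely what the paper has to redo: it proves Proposition \ref{NonlinearEstimateXsb}, a nonlinear estimate dualized against $\|u^{(0)}\|_{\widetilde{X}^{-s,1-b}}$, by repeating the case analysis with the $X^{s,b}$ transfer principle (Proposition \ref{TransferPrincipleXsb}) and Strichartz bounds in $X^{0,b}$ (Corollary \ref{StrichartzXsb}), and by interpolating in the modulation weight between an estimate with $\|u_0\|_{X^{-s,b}}$ and one with $\|u_0\|_{X^{-s,0}}$; this interpolation produces an $N_3^\epsilon$ loss, which is the actual reason the endpoint $s=s_c$ is excluded (already in the all-deterministic case A(a)), rather than the absence of a $Z$-norm refinement as you suggest. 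For the random-leading-frequency case B(d), one additionally needs a modulation cutoff $\langle\lambda+|n|^2\rangle\le N_1^{4(s-s_c+\alpha)}$ on the dual factor, with the large-modulation contribution handled separately via an $L^3_{t,x}$ Hausdorff--Young bound, and one pays a factor $N_1^{4(s-s_c+\alpha)(2b-1)}$ when passing from $\widetilde X^{-s,b}$ to $\widetilde X^{-s,1-b}$; none of these steps is present, or replaceable by the embedding argument, in your proposal.
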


\begin{remark}
The proof of Theorem 7.1 follows the analog nonlinear estimate in $X^{s, b}$ space (Proposition \ref{NonlinearEstimateXsb}). In the proof of Proposition \ref{NonlinearEstimateXsb}, we can see the reason why $s=s_c$ case fails in $X^{s, b}$ space.
\end{remark}

\subsection{$X^{s,b}$ space and some properties}
Let's recall the definition and some properties of $X^{s, b}$ spaces.
\begin{definition}
Suppose $d\in \ZZZ^+$ and $s, b\in \RRR$, for any $u: \RRR\times\TTT^d\to \mathbb{C}$, $u\in X^{s,d}(\RRR\times\TTT^d)$ (short for $X^{s,b}$) if 
\[
\|u\|_{X^{s,b}} := \|\langle n\rangle^s \langle \lambda+|n|^2 \rangle^b \,\widehat{u}(n,\lambda)\|_{l^2_n(\ZZZ^d)L^2_\lambda(\RRR)}< +\infty,
\]
where $\widehat{u}(n,\lambda)$ is the space-time Fourier transformation of $u$.
Note that $\|u\|_{X^{s,b}}$ can be also defined as $\|e^{-it\Delta}u\|_{H^b_t H_x^s(\RRR\times\TTT^d)}$. 
\end{definition}

\begin{definition}[The corresponding restriction spaces to a time interval $I$]
Suppose $d\in \ZZZ^+$, $s, b\in \RRR$ and $I$ is a time interval in $\RRR$, for any $u: I\times\TTT^d\to \mathbb{C}$, then $u\in X^{s,b}(I\times\TTT^d)$ (short for $X^{s,b}(I)$) if
\[
\|u\|_{X^{s,b}(I)}:= \inf_{v\in X^{s,b}} \{\|v\|_{X^{s,b}} : v(t)=u(t) \text{ for all } t\in I\}<+\infty.
\]
\end{definition}

\begin{remark}[Some embedding properties]
For $s\leq s'$ and $b\leq b'$, we obtain that 
\[
X^{s,b}\hookrightarrow  X^{s', b'}.
\]
Furthermore, if $b> \frac{1}{2}$, then 
\[
X^{s,b} \hookrightarrow L^\infty_t(\RRR, H^s(\TTT^d)).
\]
\end{remark}

\begin{prop}[Analog of Proposition \ref{TransferPrinciple} in $X^{s,b}$]\label{TransferPrincipleXsb}
Let $T_0 : L_x^2 \times \cdots \times L_x^2 \to L^{1}_{x, loc}(\TTT^d)$ be an m-linear operator. Assume that for some $1\leq p \leq \infty$
\begin{equation}\label{eq:7.1}
\| T_0 (e^{it\Delta }\phi_1,\cdots, e^{it\Delta }\phi_m)\|_{L^p(\mathbb{R}\times\mathbb{T}^d)} \lesssim \prod_{i=1}^m\|\phi_i\|_{L^2(\mathbb{T}^d)}.
\end{equation}
Then, for any $b>\frac{1}{2}$, there exists an extension $T : X^{0, b}\times \cdots \times X^{0, b} \to L^p(\mathbb{R}\times\mathbb{T}^d)$ satisfying
\begin{equation}
\|T(u_1, \cdots, u_m)\|_{L^p(\mathbb{R}\times\mathbb{T}^d)} \lesssim \prod_{i=1}^m \|u_i\|_{X^{0, b}};
\end{equation}
and  such that $T(u_1, \cdots , u_m) (t, \cdot) = T_0 (u_1(t), \cdots , u_m(t))(\cdot)$, a.e.
\end{prop}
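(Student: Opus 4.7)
The plan is to mimic the proof of the atomic transfer principle (Proposition \ref{TransferPrinciple}), but use the fact that any $X^{0,b}$ function can be written as a superposition of modulated free Schr\"odinger solutions rather than as a superposition of $U^p$-atoms. This representation is what makes the $b > \frac{1}{2}$ condition natural: it gives summability via Cauchy--Schwarz in the modulation parameter.

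First I would set $v_i(t,x) = e^{-it\Delta} u_i(t,x)$ and take its space-time Fourier transform $\widetilde{v_i}(\tau, n)$, so that $\|u_i\|_{X^{0,b}}^2 = \sum_n \int \langle \tau\rangle^{2b} |\widetilde{v_i}(\tau,n)|^2\, d\tau$. Defining $\phi_{i,\tau}(x)$ to be the function on $\TTT^d$ with Fourier coefficients $\widetilde{v_i}(\tau,\cdot)$, Fourier inversion in $t$ yields the representation
\[
u_i(t,x) = \int_{\RRR} e^{i\tau t}\, e^{it\Delta}\phi_{i,\tau}(x)\, d\tau,
\]
which exhibits $u_i$ as a continuous superposition of free Schr\"odinger solutions of unit $L^2$ data (up to the $\tau$-dependent coefficient $\phi_{i,\tau}$), modulated in time.

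Next I would plug this representation into the $m$-linear operator $T$ formally defined by $T(u_1,\ldots,u_m)(t,\cdot) := T_0(u_1(t),\ldots,u_m(t))(\cdot)$. By multilinearity and Fubini,
\[
T(u_1,\ldots,u_m)(t,x) = \int_{\RRR^m} e^{i(\tau_1+\cdots+\tau_m)t}\, T_0(e^{it\Delta}\phi_{1,\tau_1},\ldots,e^{it\Delta}\phi_{m,\tau_m})(x)\, d\tau_1\cdots d\tau_m.
\]
Taking $L^p_{t,x}(\RRR\times\TTT^d)$ norms and applying Minkowski's integral inequality (to pull the $L^p$ norm inside the $d\tau_1\cdots d\tau_m$ integration) followed by the hypothesis (\ref{eq:7.1}), we obtain
\[
\|T(u_1,\ldots,u_m)\|_{L^p} \lesssim \int_{\RRR^m} \prod_{i=1}^m \|\phi_{i,\tau_i}\|_{L^2_x}\, d\tau_1\cdots d\tau_m = \prod_{i=1}^m \int_\RRR \|\phi_{i,\tau_i}\|_{L^2_x}\, d\tau_i.
\]

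For each factor, Cauchy--Schwarz gives
\[
\int_\RRR \|\phi_{i,\tau}\|_{L^2_x}\, d\tau \leq \Bigl(\int_\RRR \langle \tau\rangle^{-2b}\, d\tau\Bigr)^{1/2}\Bigl(\int_\RRR \langle \tau\rangle^{2b}\|\phi_{i,\tau}\|_{L^2_x}^2\, d\tau\Bigr)^{1/2} \lesssim \|u_i\|_{X^{0,b}},
\]
where the first factor is finite precisely because $b>\frac{1}{2}$; this is the sole point at which the hypothesis $b>\frac{1}{2}$ is invoked. The pointwise-a.e.\ identification of $T$ with $T_0$ follows by approximating each $u_i$ in $X^{0,b}$ by smooth functions (for which the representation above converges absolutely), at which level the identity is tautological. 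I do not anticipate a genuine obstacle here: the only delicate point is justifying the Minkowski/Fubini manipulation, which is standard once one restricts to a dense subclass of Schwartz-in-time, smooth-in-space $u_i$ and passes to the limit using the bound just derived.
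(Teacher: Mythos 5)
Your proposal is correct and follows essentially the same route as the paper's proof: the paper likewise writes $u_i = \int_{\RRR} e^{it\Delta}\phi^{(i)}_{\mu_i}\,d\mu_i$ with $\mu_i = \lambda_i + |n_i|^2$ (the modulation factor absorbed into $\phi^{(i)}_{\mu_i}$, which is your $e^{i\tau t}\phi_{i,\tau}$), applies Minkowski's integral inequality together with the free-solution hypothesis, and then uses Cauchy--Schwarz in the modulation variable with $\int_{\RRR}\langle\mu\rangle^{-2b}\,d\mu<\infty$ for $b>\tfrac12$ to conclude $\int_{\RRR}\|\phi^{(i)}_{\mu_i}\|_{L^2_x}\,d\mu_i \leq \|u_i\|_{X^{0,b}}$. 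No gap; the two arguments coincide step for step.
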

\begin{proof}
Suppose that for all $i = 1, \cdots, m$,
\begin{equation}
\label{eq:fourierdecomposition}
u_i (x,t) = \int_\RRR d{\lambda_i}\,\sum_{n_i\in \ZZZ^d}  \widehat{u_i} (n_i, \lambda_i) e^{in_i\cdot x + \lambda_i t}
=\int_\RRR e^{it\Delta} \phi^{(i)}_{\mu_i} d{\mu_i},
\end{equation}
where $\phi^{(i)}_{\mu_i} : = \sum_{n_i\in \ZZZ^d}  \widehat{u_i} (n_i, \mu_i - |n_i|^2)e^{i\mu_it} e^{in_i\cdot x}$ and $\mu_i = \lambda_i + |n_i|^2$.

Then, by (\ref{eq:fourierdecomposition}), Minkowski integral inequality and (\ref{eq:7.1}), we obtain that
\begin{equation}\label{eq:goal}
\begin{split}
&\|T(u_1, \cdots, u_m)\|_{L^p(\mathbb{R}\times\mathbb{T}^d)}
= \|T(\int_\RRR e^{it\Delta} \phi^{(1)}_{\mu_1} d{\mu_1}, \cdots, \int_\RRR e^{it\Delta} \phi^{(m)}_{\mu_m} d{\mu_m})\|_{L^p(\mathbb{R}\times\mathbb{T}^d)}\\
\leq & \int_{\RRR^m}\prod_{i=1}^{m} d\mu_i 
\|T_0(e^{it\Delta} \phi^{(1)}_{\mu_1}, \cdots,  e^{it\Delta} \phi^{(m)}_{\mu_m})\|_{L^p(\mathbb{R}\times\mathbb{T}^d)}\\
\lesssim&\prod_{i=1}^{m}  \int_{\RRR} \|\phi_{\mu_i}^{(i)}\|_{L^2_x(\TTT^d)} d\mu_i.
\end{split}
\end{equation}
For a fixed $i$ and any $b>\frac{1}{2}$, by H\"older inequality and 
$\int_\RRR \frac{1}{\langle \mu_i\rangle^{2b}}\,d\mu < +\infty$, we have that 
\begin{equation}\label{eq:X0b}
\begin{split}
\int_{\RRR} \|\phi_{\mu_i}^{(i)}\|_{L^2_x(\TTT^d)} d\mu_i &=
\int_{\RRR} \frac{1}{\langle \mu_i \rangle^b} \|\langle\mu_i\rangle^b \phi_{\mu_i}^{(i)}\|_{L^2_x(\TTT^d)} d\mu_i \\
&\leq \left(\int_{\RRR} \frac{1}{\langle \mu_i \rangle^{2b}} d\mu_i\right)^{\frac{1}{2}}\left(\int_{\RRR} \|\langle\mu_i\rangle^b \phi_{\mu_i}^{(i)}\|^2_{L^2_x(\TTT^d)} d\mu_i\right)^{\frac{1}{2}}\\
&=\left(\int_{\RRR} \sum_{n_i} \langle \lambda_i +|n_i|^2 \rangle^{2b} |\widehat{u_i}(n_i, \lambda_i)|^2 d\lambda\right)^{\frac{1}{2}}\\
&= \|u_i\|_{X^{0, b}}.
\end{split}
\end{equation}

By (\ref{eq:goal}) and (\ref{eq:X0b}), we obtain the proposition.
\end{proof}

Following Proposition \ref{Strichartz} and Proposition \ref{TransferPrincipleXsb}, we obtain the following corollary.
\begin{cor}[Analog of Proposition \ref{Strichartz}]\label{StrichartzXsb}
Let $b>\frac{1}{2}$ and $p>p_c$, where $p_c = \frac{2(d+2)}{d}$. For all $N\geq 1$ we have
\begin{eqnarray}
\|P_{N} u \|_{L^p_{x,t}(\mathbb{T}\times\mathbb{T}^d)}&\lesssim& N^{\frac{d}{2}-\frac{d+2}{p}}\|P_{C} u \|_{X^{0, b}},\\
\|P_{C} u \|_{L^p_{x,t}(\mathbb{T}\times\mathbb{T}^d)}&\lesssim& N^{\frac{d}{2}-\frac{d+2}{p}}\|P_{C} u \|_{X^{0, b}},
\end{eqnarray}
where $C$ is a cube in $\mathbb{Z}^d$ with sides parallel to the axis of side length $N$.
\end{cor}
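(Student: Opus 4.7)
The plan is to deduce Corollary \ref{StrichartzXsb} as a direct, one-step consequence of the transfer principle (Proposition \ref{TransferPrincipleXsb}) applied in the linear case $m=1$ to the Fourier projections $P_N$ and $P_C$ themselves. The only other ingredient required is the Strichartz bound on free solutions, Proposition \ref{Strichartz}, which furnishes precisely the hypothesis (\ref{eq:7.1}) that Proposition \ref{TransferPrincipleXsb} needs.

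Concretely, I would take the linear operator $T_0 : L^2_x(\TTT^d) \to L^1_{x,\mathrm{loc}}(\TTT^d)$ defined by $T_0(\phi) := P_N \phi$. For any $p > p_c$, estimate (\ref{eq:Strichartz0}) of Proposition \ref{Strichartz} reads
\[
\|T_0(e^{it\Delta}\phi)\|_{L^p_{x,t}(\RRR\times\TTT^d)} \;=\; \|P_N e^{it\Delta}\phi\|_{L^p_{x,t}} \;\lesssim\; N^{\frac{d}{2}-\frac{d+2}{p}} \|\phi\|_{L^2_x},
\]
which is exactly hypothesis (\ref{eq:7.1}) with implicit constant $N^{\frac{d}{2}-\frac{d+2}{p}}$. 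Proposition \ref{TransferPrincipleXsb} then produces, for every $b > \tfrac{1}{2}$, an extension satisfying
\[
\|P_N u\|_{L^p_{x,t}(\RRR\times\TTT^d)} \;\lesssim\; N^{\frac{d}{2}-\frac{d+2}{p}} \|u\|_{X^{0,b}},
\]
and substituting $P_N u$ in place of $u$ (using $P_N^2 = P_N$ together with $\|P_N u\|_{X^{0,b}} \leq \|u\|_{X^{0,b}}$) yields the first inequality of the corollary. The second inequality is obtained by the identical argument with $T_0(\phi) := P_C \phi$, invoking (\ref{eq:Strichartz1}) in place of (\ref{eq:Strichartz0}).

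Since the argument is a mechanical transfer of a known free-solution estimate, I do not expect any analytic obstacle. The real content was already packaged inside Proposition \ref{TransferPrincipleXsb}, whose proof rested on the modulation decomposition $u = \int_\RRR e^{it\Delta}\phi_\mu \, d\mu$ together with the Cauchy--Schwarz bound $\int_\RRR \langle\mu\rangle^{-2b}\,d\mu < \infty$, which is exactly where the restriction $b > \tfrac{1}{2}$ enters. The only point worth flagging in writing up the corollary is that one should identify the extension produced by the transfer principle with the literal Fourier projection $P_N u$ (respectively $P_C u$), which is automatic from the a.e.\ agreement clause ``$T(u_1,\ldots,u_m)(t,\cdot) = T_0(u_1(t),\ldots,u_m(t))(\cdot)$'' in Proposition \ref{TransferPrincipleXsb}.
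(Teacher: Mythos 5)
Your argument is exactly the paper's route: the corollary is stated there as an immediate consequence of Proposition \ref{Strichartz} (the free-solution bounds (\ref{eq:Strichartz0}), (\ref{eq:Strichartz1})) fed into the $X^{s,b}$ transfer principle, Proposition \ref{TransferPrincipleXsb}, applied with $m=1$ and $T_0 = P_N$ or $P_C$, which is what you do. Your additional remarks (identifying the extension with $P_N u$ via the a.e.\ agreement clause, and noting that $b>\tfrac12$ enters only through $\int\langle\mu\rangle^{-2b}d\mu<\infty$) are consistent with the paper and require no change.
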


\begin{prop}[$X^{s,b}$ norm of Duhamel's formula]
For any $s>\frac{1}{2}$, $s\in \RRR$, and $I$ is a time interval $[0, \dd]$, we obtain that 
\begin{equation}
\| \mathcal{I}(f) \|_{X^{s, b}(I)}\leq \sup_{\|v\|_{\widetilde{X}^{-s, 1-b}(I)}=1}\left|\int_0^\delta
\int_{\mathbb{T}^d} f(t, x)\overline{v(t,x)}dxdt\right|.
\end{equation}
where $\|v\|_{\widetilde{X}^{-s, 1-b}(I)} := \|e^{it\Delta}v\|_{H^{1-b}_tH^{-s}_x(I\times\TTT^d)}$
\end{prop}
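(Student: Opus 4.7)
The plan is a Fourier-side duality argument, parallel in spirit to the proof of the atomic-space analog (Proposition \ref{DualProp}) but adapted to the Bourgain framework. The $X^{s,b}$ norm is a weighted $L^2$ norm on the space-time Fourier side, so Plancherel together with weighted-$L^2$ duality gives, for any extension $U \in X^{s,b}(\RRR)$ of $\mathcal{I}(f)$ from $I$,
\[
\|U\|_{X^{s,b}(\RRR)} \;=\; \sup_{\|w\|_{X^{-s,-b}(\RRR)}=1}\;\left| \int_\RRR \int_{\TTT^d} U(t,x)\,\overline{w(t,x)}\, dx\, dt \right|,
\]
and $\|\mathcal{I}(f)\|_{X^{s,b}(I)} \leq \|U\|_{X^{s,b}(\RRR)}$ by the restriction-norm definition. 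I would take $U(t) := \eta(t)\,\mathcal{I}(f)(t)$ for a smooth time cutoff $\eta$ with $\eta\equiv 1$ on $I$; the hypothesis $b>\tfrac12$ is what makes such smooth time cutoffs act boundedly on $X^{s,b}$.

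Applying Fubini to the double integral defining $\mathcal{I}(f)$ and using $(e^{i(t-s)\Delta})^{\ast} = e^{-i(t-s)\Delta}$, the pairing rewrites as
\[
\int U\,\overline{w}\, dx\, dt \;=\; \int_0^\delta \int_{\TTT^d} f(s,x)\,\overline{v(s,x)}\, dx\, ds,
\qquad v(s,x) := \int_s^{\infty} \eta(t)\, e^{-i(t-s)\Delta}\, w(t,x)\, dt.
\]
The main step is then to verify $\|v\|_{\widetilde{X}^{-s,1-b}(I)} \lesssim \|w\|_{X^{-s,-b}(\RRR)}$. Two ingredients are needed. First, the time integration $\int_s^\infty \cdots dt$ effectively inverts $\partial_s$ and thus gains one full derivative in time, which on the Fourier side converts the weight $\langle \lambda + |n|^2\rangle^{-b}$ of $w$ into $\langle \lambda + |n|^2\rangle^{1-b}$ for $\bar v$, giving $\|\bar v\|_{X^{-s,1-b}} \lesssim \|w\|_{X^{-s,-b}}$. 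Second, complex conjugation sends $(n,\lambda)\mapsto(-n,-\lambda)$ in Fourier, so the weight $\langle\lambda+|n|^2\rangle$ of $\bar v$ becomes $\langle\lambda-|n|^2\rangle$ on $v$ itself, i.e.\ $\|\bar v\|_{X^{-s,1-b}} = \|v\|_{\widetilde{X}^{-s,1-b}}$. This is precisely why the tilde space with the opposite-sign paraboloid appears in the statement.

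The hard part will be making the first observation rigorous on a finite time slab rather than on all of $\RRR$, since the antiderivative-in-time interacts with the truncation to $I$: a cutoff boundary contribution from $\eta$ and a trace term at $s=\delta$ must be controlled and absorbed into the estimate. Here the cutoff $\eta$ together with the embedding $X^{s,b}\hookrightarrow C_t H^s_x$ (valid for $b>\tfrac12$) are the main tools, while the restriction-norm formulation absorbs any remaining dependence on the extension, so it suffices to exhibit one admissible choice as provided by $U = \eta\,\mathcal{I}(f)$.
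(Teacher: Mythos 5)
the paper states this proposition without proof (it is offered as the $X^{s,b}$ analog of Proposition \ref{DualProp}), so there is no argument of the author's to measure yours against; what follows assesses your outline on its own. Your architecture is the right and standard one: extend by a smooth cutoff, use Plancherel duality for $X^{s,b}(\RRR)$, and reduce to a bound for the adjoint operator $v(s)=\int_s^\infty\eta(t)e^{-i(t-s)\Delta}w(t)\,dt$; this is precisely the dual of the textbook estimate $\|\eta\,\mathcal{I}(f)\|_{X^{s,b}}\lesssim\|f\|_{X^{s,b-1}}$ for $b>\tfrac12$ (cf.\ \cite{book:tao}). But the crux is asserted rather than proved: ``integration in time inverts $\partial_s$ and converts $\langle\lambda+|n|^2\rangle^{-b}$ into $\langle\lambda+|n|^2\rangle^{1-b}$'' is exactly the statement needing proof, and it is not a clean derivative gain --- on the low-modulation region $|\lambda+|n|^2|\lesssim 1$ there is no gain at all, and the standard proof requires the cutoff, a high/low modulation splitting, and the condition $1-b<\tfrac12$ to absorb the sharp-cutoff-type remainders; moreover your $v$ tends to a nonzero constant as $s\to-\infty$, so its global norm is infinite and a further time localization is unavoidable. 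These are not deferred ``boundary terms''; they are the entire analytic content. (Also, a cutoff-based argument cannot produce the stated constant $1$; the inequality must be read as $\lesssim$.)

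\textbf{The more serious issue is a conjugation error, which also exposes a sign problem in the statement you are matching.} Writing $e^{-is\Delta}v(s)=\int_s^\infty\eta(t)\,e^{-it\Delta}w(t)\,dt$, the antiderivative gain is measured against the \emph{same} paraboloid as $w$: what is true (after localization) is $\|v\|_{X^{-s,1-b}}\lesssim\|w\|_{X^{-s,-b}}$, i.e.\ the plain weight $\langle\lambda+|n|^2\rangle^{1-b}$ on $v$ itself. Your step instead places the gain on $\bar v$ and then uses $\|\bar v\|_{X^{-s,1-b}}=\|v\|_{\widetilde{X}^{-s,1-b}}$ to land in the tilde space; but $\bar v$ lives near the opposite paraboloid, carries an extra modulation of size $2|n|^2$, and $\|\bar v\|_{X^{-s,1-b}}$ exceeds what you control by a factor $\langle n\rangle^{2(1-b)}$, so that bound is false. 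Note also that the Plancherel duality you invoke pairs $\widehat f(n,\lambda)$ with $\overline{\widehat v(n,\lambda)}$ at the same point, so the dual norm it produces on $v$ is the plain $X^{-s,1-b}$ norm --- no conjugation is needed. In fact, with the tilde norm as literally defined in the paper ($\|e^{it\Delta}v\|_{H^{1-b}_tH^{-s}_x}$, i.e.\ modulation measured against $\lambda=|n|^2$), the proposition itself fails for $\tfrac12<b<1$: take $f=\eta(t)e^{it\Delta}\phi$ with $\phi$ a single Fourier mode at frequency $N$; then $\|\mathcal{I}(f)\|_{X^{s,b}(I)}\gtrsim\delta\langle N\rangle^{s}$, while the right-hand side is $O\bigl(\delta^{1/2}\langle N\rangle^{s}N^{-2(1-b)}\bigr)$ because the dual function can only pair with $f$ through the oscillation $e^{-2itN^2}$. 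So you should prove the corrected statement, with $\widetilde{X}^{-s,1-b}$ defined via $e^{-it\Delta}$ (equivalently with weight $\langle n\rangle^{-s}\langle\lambda+|n|^2\rangle^{1-b}$ on $\widehat v$); your outline, with the conjugation step deleted, $f$ extended by zero off $I$ before forming $\eta\,\mathcal{I}(f)$, and the antiderivative lemma actually carried out, then becomes the standard argument.
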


\subsection{Nonlinear estimate in $X^{s, b}$}

\begin{prop}[Analog of Proposition \ref{NonlinearEstimate} in $X^{s, b}$]\label{NonlinearEstimateXsb}
Suppose $d\geq 3$ and  $s_r(d)$ is given in (\ref{coef:srd}).
Let $0\leq \alpha< s_r'(d)$, $s\in (s_c, s_c + s_r'(d)-\alpha)$, $r>0$, $0\leq \dd<1$, and $I = [0,\dd]$.    There exist some $b>\frac{1}{2}$, $\Omega_\dd\subset \Omega$ with $\PPP(\Omega_\dd^c)<e^{-1/\dd^r}$ and $c>0$, such that we obtain that
\begin{align*}
&\left|\int_0^\delta\int_{\mathbb{T}^d} \mathcal{N}(w^{(1)} + v_0^\omega, \overline{w^{(2)} + v_0^\omega}, w^{(3)} + v_0^\omega)\overline{u^{(0)}}\,dxdt\right|\\
\lesssim & \|u^{(0)}\|_{\widetilde{X}^{-s, 1-b}(I)}\left(\dd^{c\min{\{1, s-s_c\}}}\|w^{(1)}\|_{X^{s, b}(I)}\|w^{(2)}\|_{X^{s, b}(I)}\|w^{(3)}\|_{X^{s, b}(I)}+\dd^{c}\sum_{S_J\subset\{1, 2, 3\}\atop J\neq \{1,2,3\}}\prod_{j\in S_J}\|w^{(j)}\|_{X^{s, b}(I)}\right),
\end{align*}
where $v_0^\omega$ is defined (\ref{def:v0}), $u^{(0)}\in \widetilde{X}^{-s, 1-b}(I)$
 and $w^{(i)}\in X^{s, b}(I)$ for $i=1, 2, 3$. 
 (when the subset $S_J = \emptyset$, $\prod_{j\in S_J}\|w^{(j)}\|_{X^{s, b}(I)} =1.$)
\end{prop}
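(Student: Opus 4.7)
The plan is to run the case-by-case analysis of Proposition \ref{NonlinearEstimate} essentially verbatim, swapping the atomic-space machinery $(U^2_\Delta, V^2_\Delta, X^s, Y^{-s})$ for the Bourgain-space framework $(X^{s,b}, \widetilde X^{-s,1-b})$ with $b>\tfrac12$ chosen sufficiently close to $\tfrac12$. The two workhorse tools that make such a swap automatic are Proposition \ref{TransferPrincipleXsb}, which lifts any $L^p$ Strichartz-type bound on free evolutions $e^{it\Delta}\phi$ to the corresponding bound with $\|\phi\|_{L^2}$ replaced by $\|u\|_{X^{0,b}}$, and Corollary \ref{StrichartzXsb}, which packages those lifts in the form used throughout Section 5. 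Starting from duality against $\widetilde X^{-s,1-b}$, the $8$-case split $u^{(1)}, u^{(2)}, u^{(3)} \in \{D_i, R_i\}$ from Section 5 is preserved.

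In Case A (all three factors deterministic) the cube-decomposition/H\"older arguments of Propositions 5.3--5.6 go through with every instance of $\|\cdot\|_{X^s}$ or $\|\cdot\|_{Y^0}$ replaced by $\|\cdot\|_{X^{s,b}}$. The small gain $\delta^{c\min\{1,s-s_c\}}$ is recovered from the standard $X^{s,b}$ time-localization estimate
\[
\|\chi_{[0,\delta]} u\|_{X^{s,b'}} \lesssim \delta^{b-b'} \|u\|_{X^{s,b}}, \qquad \tfrac{1}{2}<b'<b,
\]
which plays the role that $|J|^{1/q}$ played through Proposition \ref{timelocalTransferPrinciple}. In Case B the probabilistic ingredients (Lemma \ref{Neps}, Corollary \ref{CorLp}, Lemma \ref{LargeDeviation}, Lemma \ref{MLD}, together with the matrix bound Lemma \ref{lem:matrix} used in Case B(d)) concern only the fixed random data $v_0^\omega$ and carry over unchanged. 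The deterministic factors $P_{N_i}w^{(i)}$, which in Section 5 were represented by $P_{N_i} e^{it\Delta}\phi^{(i)}$ via Proposition \ref{timelocalTransferPrinciple}, are now represented through the Fourier decomposition
\[
P_{N_i} w^{(i)}(x,t) = \int_{\mathbb R} e^{it\Delta} \phi^{(i)}_{\mu_i}(x)\, d\mu_i
\]
used in the proof of Proposition \ref{TransferPrincipleXsb}. The linear-solution level bounds from Steps 1--2 of Propositions 5.7--5.11 are then assembled by Minkowski, and the weight $\langle \mu_i\rangle^{-b}$ is absorbed by Cauchy--Schwarz exactly as in (\ref{eq:X0b}), producing the same dyadic-summable bound with the $\delta^{1/2}$ gain from Proposition \ref{timelocalTransferPrinciple} replaced by a positive power of $\delta$ coming from time-localization and the favourable difference between $b$ and $\tfrac12$.

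The main obstacle and source of the restriction $s > s_c$ is that the refinement of Section 6 --- the $Z^{s_c}, Z'^{s_c}$ bilinear argument based on Lemma \ref{lem:bilinear} which rescued the endpoint $s=s_c$ in the atomic theory --- has no clean analogue here. Lemma \ref{lem:bilinear} is stated in terms of $Y^{0}(I), Y^{s_c}(I)$ and its proof exploits the atomic/variation structure of $V^2_\Delta$; it does not transfer to $X^{s,b}$ in a way that would allow the contraction argument of Section 6 to close. Consequently in Case A(a) the fully deterministic dyadic sum over $N_1 \ge N_2 \ge N_3$ becomes logarithmically divergent at $s=s_c$, and the only slack available to tame it is the factor $N_2^{-(s-s_c)}$, which is strictly positive precisely when $s>s_c$. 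This both explains the strict inequality in the hypothesis and the failure alluded to in the remark following Theorem \ref{thm:MainThmXsb}; everything else in the proof is a bookkeeping translation of Section 5 into the $X^{s,b}$ language.
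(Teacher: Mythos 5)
There is a genuine gap, and it sits exactly where the paper has to do new work. Your proposal treats the passage from Section 5 to the $X^{s,b}$ setting as an automatic swap via Proposition \ref{TransferPrincipleXsb} and Corollary \ref{StrichartzXsb}, but every one of those tools (including the Cauchy--Schwarz absorption of $\langle\mu\rangle^{-b}$ in (\ref{eq:X0b})) requires the temporal exponent to exceed $\tfrac12$. The proposition you must prove, however, measures the dual factor $u^{(0)}$ in $\widetilde{X}^{-s,1-b}(I)$ with $1-b<\tfrac12$, because that is the norm forced on you by the Duhamel duality in $X^{s,b}$. Running your ``bookkeeping translation'' therefore only produces bounds with $\|u^{(0)}\|_{X^{-s,b}}$ on the right-hand side, which is not the stated estimate and would not close the fixed point argument. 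You never address how to downgrade the exponent on $u^{(0)}$ from $b$ to $1-b$, and that is the central difficulty of the $X^{s,b}$ version.

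The paper's proof is organized around precisely this point. In Case A it first obtains the $\|u_0\|_{X^{-s,b}}$ bound as you describe, then derives a second, cruder estimate in which $u_0$ is only placed in $L^2_{t,x}$ (i.e.\ $X^{-s,0}$), paying a full power $N_3^{1-(s-s_c)}$, and interpolates in the modulation exponent to reach $1-b$; the resulting $N_3^{\epsilon}$ loss is the actual reason the endpoint $s=s_c$ fails here. Your diagnosis via the missing $Z^{s_c},Z'^{s_c}$ refinement misidentifies the mechanism: that refinement in Section 6 supplies smallness for the contraction at $s=s_c$, whereas in the $X^{s,b}$ setting the obstruction already appears in the nonlinear estimate itself through the interpolation loss. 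In Case B the paper likewise cannot simply reuse Steps 1--4 of Propositions 5.7--5.11: it first restricts the dual factor to low modulations $\langle\lambda+|n|^2\rangle\le N_1^{4(s-s_c+\alpha)}$, disposing of the high-modulation piece by a crude $L^3$/Hausdorff--Young estimate (\ref{eq:XsbBd1})--(\ref{eq:XsbBd3}), and only then recovers the Section 5 argument, converting $\widetilde{X}^{-s,b}$ to $\widetilde{X}^{-s,1-b}$ at the cost of $N_1^{4(s-s_c+\alpha)(2b-1)}$ with $b$ close to $\tfrac12$; this is where the hypotheses $s<s_c+s_r(d)-\alpha$ and the choice of $b$ actually enter. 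Without a modulation decomposition or an interpolation step of this kind, your argument does not yield the claimed estimate.
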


\begin{proof}
The proof of Proposition \ref{NonlinearEstimateXsb} is similar with the proof of Proposition \ref{NonlinearEstimate}: we first dyadically decompose the terms in each position of the nonlinear term $\mathcal{N}(u_1, u_2, u_3)$, and then we have the same cases:
Denote \begin{equation}
R_i = P_{N_i} v_0^\omega \text{ and }  D_i = P_{N_i} w \text{ for } i\in\{1, 2, 3\}.
\end{equation} 
The list of all cases of $(u_1, u_2, u_3)$ is below:

\begin{enumerate}
\item[A.]  $u_1 = D_1$: 
\begin{enumerate}
\item $(D_1, D_2, D_3)$;
\item $(D_1, D_2, R_3)$;
\item $(D_1, R_2, D_3)$;
\item $(D_1, R_2, R_3)$;
\end{enumerate}
\item[B.] $u_1 = R_1$:
\begin{enumerate}
\item $(R_1, R_2, R_3)$;
\item $(R_1, R_2, D_3)$;
\item $(R_1, D_2, R_3)$;
\item $(R_1, D_2, D_3)$.
\end{enumerate}
\end{enumerate}

For the \textbf{Case A}, we can control the nonlinear terms as Proposition 5.3 - 5.6 in a similar approach. For simplicity, let me just show the \textbf{Case A (a)} as an example. 

Choose $b = \frac{1}{2}+$ which is close to $\frac{1}{2}$ enough. We follow the proof of Proposition 5.3 almost identically, but Proposition \ref{StrichartzXsb} take place of Proposition \ref{Strichartz}. Then we obtain that
\begin{equation}\label{eq:XsbAa1}
\begin{split}
&\left|\int_0^\dd\int_{\mathbb{T}^d} \mathcal{N}(\widetilde{D}_{1}, \widetilde D_2, \widetilde D_3) \overline u_0 \, dxdt\right|\\ \lesssim& \dd^{c \min\{1, s-s_c\}} (\frac{N_3\min{\{N_0, N_2\}}}{N_2^2})^c \frac{1}{N_2^{s-s_c}} \|{u}_0\|_{X^{-s, b}}\|{D}_1\|_{X^{s, b}}\|{D}_2\|_{X^{s, b}}\|{D}_3\|_{X^{s, b}}.
\end{split}
\end{equation}

To get the $\|{u}_0\|_{X^{-s, 1-b}}$ instead of $\|{u}_0\|_{X^{-s, b}}$, we need another nonlinear estimate. By the same cube decomposition, H\"{o}lder inequality, and Proposition \ref{StrichartzXsb}, we obtain that (here we only consider the main part of $\mathcal{N}(\widetilde{D_1}, \widetilde D_2, \widetilde D_3)$, and the remaining part is easily bounded)
\begin{equation}\label{eq:XsbAa2}
\begin{split}
& \left|\int_0^\dd\int_{\mathbb{T}^d}\overline{u}_0\widetilde{D}_1\widetilde{D}_2\widetilde{D}_3 \, dxdt\right|\\
 \leq& \sum_{C_j\sim C_k}\int_0^\dd\int_{\mathbb{T}^d}\left|(P_{C_j}\overline{u}_0)(P_{C_k}\widetilde{D}_1)\widetilde{D}_2\widetilde{D}_3\right| \, dxdt\\
 \lesssim&  \sum_{C_j\sim C_k}  \|P_{C_j}u_0\|_{L_{t,x}^{2}}\|P_{C_k}D_1\|_{L_{t,x}^{4}}\|D_2\|_{L_{t,x}^{4}}\|D_3\|_{L_{t,x}^{\infty}}\\
 \lesssim & \sum_{C_j\sim C_k} {N_2^{s_c}N_3^{s_c+1}} \|P_{C_j}u_0\|_{X^{0, 0}}\|P_{C_k}D_1\|_{X^{0, b}}\|D_2\|_{X^{0, b}}\|D_3\|_{X^{0, b}}\\
 \lesssim& \frac{N_3^{1-(s-s_c)}}{N_2^{s-s_c}} \|u_0\|_{X^{-s, 0}}\|D_1\|_{X^{s, b}}\|D_2\|_{X^{s_c, b}}\|D_3\|_{X^{s, b}}
\end{split}
\end{equation}

Using complex interpolation method from $X^{s, b}$ and $X^{s, 0}$ to $X^{s, 1-b}$ and interpolating (\ref{eq:XsbAa1}) and (\ref{eq:XsbAa2})  (actually we don't interpolate (\ref{eq:XsbAa1}) and (\ref{eq:XsbAa2}) directly but interpolate two estimates in the process of (\ref{eq:XsbAa1}) and (\ref{eq:XsbAa2})), we obtain that 

\begin{equation}\label{eq:XsbAa3}
\begin{split}
&\left|\int_0^\dd\int_{\mathbb{T}^d} \mathcal{N}(\widetilde{D}_{1}, \widetilde D_2, \widetilde D_3) \overline u_0 \, dxdt\right|\\ \lesssim& \dd^{c \min\{1, s-s_c\}} (\frac{N_3\min{\{N_0, N_2\}}}{N_2^2})^{c-\epsilon} \frac{N_3^\epsilon}{N_2^{s-s_c}} \|{u}_0\|_{X^{-s, 1-b}}\|{D}_1\|_{X^{s, b}}\|{D}_2\|_{X^{s, b}}\|{D}_3\|_{X^{s, b}}.
\end{split}
\end{equation}

Observe the bound in (\ref{eq:XsbAa3}), to sum up over $N_2$ and $N_3$, we need $s>s_c + \epsilon$, which is the reason why we can't obtain $s=s_c$ case in $X^{s, b}$ space.

For the \textbf{Case B}, we could obtain analogs of Proposition 5.7-5.10 by modifying the proofs a little bit. Let me show the \textbf{Case B (d)} as an example. Similar with the proof of Proposition 5.10, we only focus $N_0\sim N_1\geq N_2\geq N_3$. Then
\begin{equation}\label{eq:XsbBd1}
\left|\int_0^\dd\int_{\mathbb{T}^d} \mathcal{N}(\widetilde{R}_{1}, \widetilde D_2, \widetilde D_3) \overline u_0 dxdt\right|\leq \|R_1\|_{L^\infty_{t, x}}\|D_2\|_{L^3_{t, x}}\|D_3\|_{L^3_{t, x}}\|u_0\|_{L^3_{t, x}}.
\end{equation}

By Hausdorff-Young inequality w.r.p.t the time $t$ and H\"older inequality, we obtain that for a general function $u$ and dyadic number $N$,
\begin{equation}\label{eq:XsbBd2}
\begin{split}
\|P_N u\|_{L^3_{t, x}} &\lesssim \sum_{|n|\sim N}\|e^{ix\cdot n}\int \widehat{u}(n, \lambda) e^{\lambda t}\, d\lambda\|_{L^3_{t, x}}\\
&\lesssim \sum_{|n|\sim N} \left( \int |\widehat{u}(n, \lambda)|^{\frac{3}{2}} \, d\lambda \right)^{\frac{2}{3}}\lesssim \|P_N u\|_{X^{\epsilon, \frac{1}{6}+\epsilon}}.
\end{split}
\end{equation}

By Corollary \ref{CorLp} and (\ref{eq:XsbBd2}), we obtain that

\begin{equation}\label{eq:XsbBd3}
\begin{split}
\text{LHS of }(\ref{eq:XsbBd1})&\lesssim \frac{\log N_1}{N_1^{s_c-\alpha}}\|D_2\|_{X^{\epsilon, \frac{1}{6}+\epsilon}}
\|D_3\|_{X^{\epsilon, \frac{1}{6}+\epsilon}}
\|u_0\|_{X^{\epsilon, \frac{1}{6}+\epsilon}}\\
&\lesssim \frac{N_1^{s-s_c +\alpha +\epsilon}}{N_2^s N_3^s}\|D_2\|_{X^{s, \frac{1}{6}+\epsilon}}
\|D_3\|_{X^{s, \frac{1}{6}+\epsilon}}
\|u_0\|_{X^{-s, \frac{1}{6}+\epsilon}}
\end{split}
\end{equation}

Thus the estimate (\ref{eq:XsbBd3}) is conclusive provided for some deterministic term,  we consider the contribution $\widehat{u_0}|_{\langle \lambda + |n|^2\rangle> N_1^{4(s-s_c+\alpha)}}$.  Thus in this case, LHS of (\ref{eq:XsbBd1}) mab be estimated assuming
\[
\langle \lambda + |n|^2\rangle\leq N_1^{4(s-s_c+\alpha)}.
\]

For $s-s_c+\alpha <\frac{1}{2}$, replacing Proposition \ref{TransferPrinciple} and \ref{timelocalTransferPrinciple} by Proposition \ref{TransferPrincipleXsb}, we could recover the proof of Proposition 5.10 and obtain an analog of (\ref{eq:prop5.10}),
\begin{equation}\label{eq:XsbBd4}
\begin{split}
&\left|\int_0^\dd\int_{\mathbb{T}^d} \mathcal{N}(\overline{R}_{1},  D_2, D_3) \overline u_0 dxdt\right|\\
\lesssim &\dd^{4s_r(d)-\epsilon} \frac{ N_2^{s_c-s}N_3^{s_c-s}}{N_1^{s_c - s +s_r (d) + \frac{1}{4}-\alpha-\epsilon}}  \|u_0\|_{\widetilde{X}^{-s, b}}\|D_2\|_{X^{s, b}} \|D_3\|_{X^{s, b}}\\
\lesssim &\dd^{4s_r(d)-\epsilon} \frac{  N_1^{4(s-s_c+\alpha)(2b-1)}N_2^{s_c-s}N_3^{s_c-s}}{N_1^{s_c - s +s_r (d) + \frac{1}{4}-\alpha-\epsilon}}  \|u_0\|_{\widetilde{X}^{-s, 1-b}}\|D_2\|_{X^{s, b}} \|D_3\|_{X^{s, b}}.
\end{split}
\end{equation}
where $s_r(d)$ is defined by (\ref{coef:srd}), since $s<s_c+s_r(d)-\alpha$, we have the proposition.

In a similar idea, we can also recover the other cases in $X^{s, b}$.
\end{proof}

\bibliographystyle{amsplain}
\bibliography{Yue_ref}{}

\end{document}